\documentclass[11pt, reqno]{amsart}

\usepackage{amsmath}
\usepackage{fullpage}
\usepackage{amssymb}
\usepackage{amsthm}
\usepackage{bbm}
\usepackage{subfigure}
\usepackage{float}
\usepackage{graphicx}
\usepackage{enumitem}
\usepackage{epstopdf}
\usepackage{xcolor}
\usepackage[hidelinks]{hyperref}

\usepackage{titlesec} 
\titleformat{\section}{\vskip10pt\large\bfseries}{\thesection.}{0.5em}{\centering\vspace{5pt}}
\titleformat{\subsection}{\vskip10pt\normalsize\bfseries}{\thesubsection.}{0.5em}{}
\titleformat{\subsubsection}{\vskip10pt\normalsize\bfseries}{\thesubsection.}{0.5em}{}
\usepackage{lipsum,titletoc}
\titlecontents{section}[2.5em]{\rmfamily}
{\contentslabel{2.3em}} 
{\hspace*{-2.3em}} {\titlerule*[1pc]{.}\contentspage}
\titlecontents{subsection} [4.8em]{\rmfamily}             
{\contentslabel{2.3em}} 
{\hspace*{-2.3em}} {\titlerule*[1pc]{.}\contentspage}

\allowdisplaybreaks

\newtheorem{lemma}{Lemma}[section]
\newtheorem{theorem}[lemma]{Theorem}
\newtheorem{proposition}[lemma]{Proposition}
\newtheorem{remark}[lemma]{Remark}

\newtheorem{assumption}[lemma]{Assumption}
\newcommand{\fe}{{\rm e}}
\newcommand{\im}{{\rm i}}
\numberwithin{equation}{section}
\DeclareMathOperator{\sgn}{sgn}
\DeclareMathOperator{\tv}{TV}
\DeclareMathOperator{\bv}{BV}
\graphicspath{{./fig/}}

\title{A vanishing-viscosity minimizing movement\\ Fourier spectral method for scalar conservation laws}

\author{Lun Ji}
\author{\,\,Buyang Li}
\author{\,Fangyan Yao}
\address{Department of Applied Mathematics, The Hong Kong Polytechnic University, Hung Hom, Hong Kong}
\email{lun-422.ji@polyu.edu.hk, buyang.li@polyu.edu.hk, fangyan.yao@polyu.edu.hk}
\thanks{This work was supported in part by the National Natural Science Foundation of China (grant No.~12525111).}

\begin{document}

\subjclass[2020]{65M12, 65M70, 35L65, 35K55}
\keywords{vanishing-viscosity minimizing movement Fourier spectral method, scalar hyperbolic conservation laws, heat-kernel postprocessing, shock capturing, error analysis}

\begin{abstract}
We propose a vanishing-viscosity minimizing movement (VVMM) Fourier spectral method for scalar hyperbolic conservation laws under periodic boundary conditions. The method is a fully discrete space--time residual-minimizing scheme, Fourier spectral in space and polynomial in time, stabilized by vanishing viscosity and heat-kernel postprocessing. A basic analytical difficulty in Fourier discretizations of nonlinear conservation laws is the interaction between Fourier projection and the nonlinear flux; the resulting projection residual contains unresolved high-frequency components. VVMM addresses this issue by selecting, on each time slab, the numerical solution that minimizes {a penalized cut-off viscous conservation laws residual} and then applies heat-kernel postprocessing to damp unresolved modes. We prove a fully discrete $L^1$ error estimate, in any fixed space dimension, that separates the vanishing-viscosity error, the temporal interpolation error, and the spatial Fourier truncation error. {Under exact minimization of the penalized cut-off residual and suitable parameter coupling,} the estimate yields, for each fixed $\gamma>0$, an $L^1$ convergence rate $N^{-1/2+\gamma}$, with a constant depending on $\gamma$. Numerical experiments in one and two dimensions illustrate stable shock capturing without visible Gibbs oscillations and convergence behavior consistent with the theory.
\end{abstract}

\maketitle
\vspace{-2\baselineskip}

\section{Introduction}

Scalar hyperbolic conservation laws provide a standard mathematical setting for studying nonlinear transport, finite-speed propagation, and shock formation in continuum models such as fluid dynamics, traffic flow, gas dynamics, and sedimentation; see, for example, \cite{Dafermos2016,LeVeque2002}. This paper develops a Fourier spectral approximation framework for the periodic scalar problem
\begin{equation}\label{periodic}
\begin{aligned}
&\partial_t u + \nabla \cdot f(u)=0, \qquad (t,x)\in[0,T]\times\mathbb{T}^d,\\
&u(0,x)=u_0(x)\in L^\infty\cap\bv(\mathbb{T}^d).
\end{aligned}
\end{equation}
Here \(u=u(t,x)$ is the unknown conserved quantity and $f:\mathbb{R}\to\mathbb{R}^d$ is a nonlinear flux. The periodic formulation is the setting of the analysis and convergence theorem below. It is also compatible with localized Cauchy problems on $\mathbb R^d$ over finite time intervals: by finite speed of propagation, one may choose a sufficiently large periodic box so that periodic copies do not interact with the region of interest up to time $T$, and then rescale the box to $\mathbb T^d$.

Although \eqref{periodic} is simple in form, smooth initial data may generate shocks in finite time. After shock formation, classical solutions generally cease to exist and weak solutions are not unique. The relevant solution concept is therefore the entropy solution, selected by admissibility criteria such as entropy inequalities or the vanishing-viscosity principle \cite{Dafermos2016,Kruzkov1970}. We use this principle as the continuous reference mechanism. For $\varepsilon>0$, the corresponding viscous problem is
\begin{equation}\label{vis}
\begin{aligned}
&\partial_t u^\varepsilon+\nabla\cdot f(u^\varepsilon)
=\varepsilon\Delta u^\varepsilon,
\qquad (t,x)\in[0,T]\times\mathbb T^d,\\
&u^\varepsilon(0,x)=u_0(x)\in L^\infty\cap\bv(\mathbb{T}^d) .
\end{aligned}
\end{equation}
It is standard that $u^\varepsilon$ converges to the entropy solution as $\varepsilon\to0$. In particular, for $L^\infty\cap\bv$ data one has the Kuznetsov-type estimate
\begin{equation}\label{visconv}
\|u(t)-u^\varepsilon(t)\|_{L^1}\le C\varepsilon^{1/2},
\qquad 0\le t\le T,
\end{equation}
where $C$ depends on $T$, $f$, and $u_0$, but not on $t$ or $\varepsilon$.

From the numerical perspective, shock formation introduces another difficulty beyond the selection of the entropy solution. Specifically, numerical approximations may generate nonphysical oscillations near the shock. This is particularly transparent for Fourier approximations, where it appears as the classical Gibbs effect. However, the underlying difficulty is more general: an appropriate stabilization mechanism is required to control nonphysical oscillations around  discontinuities, especially for high-order approximations.

A large body of numerical methods has therefore been developed to address these difficulties. Monotone finite-volume schemes, Godunov-type methods, ENO/WENO reconstructions, discontinuous Galerkin methods with suitable limiters, and related shock-capturing algorithms are robust and efficient in practice \cite{CockburnShu1998,Godunov1959,HartenLaxVanLeer1983,JiangShu1996,LeVeque2002,ShuOsher1988}. {Their stability mechanisms are typically local: numerical fluxes, nonlinear reconstructions, entropy fixes, and limiters enforce monotonicity, entropy admissibility, or nonlinear bounds, often through numerical dissipation that suppresses nonphysical oscillations near shocks.} For monotone schemes, Kuznetsov-type arguments give classical $L^1$ error estimates; in particular, the one-half order rate is obtained in one space dimension \cite{Kuznetsov1976}, and related multidimensional estimates are available for flux-splitting monotone schemes on Cartesian-product grids \cite{CockburnGremaud1997}. These methods are indispensable computational tools. Their stability theory, however, is built on local structures such as monotonicity,  conservation across cell interfaces, discrete entropy inequalities, and, in some settings, bounded variation estimates. These structures are not directly available for global spectral residual minimization. A large high-order literature further develops maximum principle, positivity, invariant-region, and bound-preserving mechanisms for finite-volume, finite-difference WENO, and discontinuous Galerkin methods through local limiting or flux correction; representative works include \cite{VanderVegtXiaXu2019,WuShu2023,XiongQiuXu2013,QZhangShu2004,QZhangShu2010,ZhangShu2010,ZhangShu2010Euler,ZhangShu2011}. These methods preserve key nonlinear bounds through local constraints on cell averages, reconstructed polynomials, or numerical fluxes. The present work pursues a different mechanism: the approximation is selected globally on each time slab by minimizing a viscous residual over a Fourier trial space, and stability is measured against a heat-postprocessed vanishing-viscosity reference flow rather than enforced by a local limiter.

Fourier spectral methods offer a different type of approximation. They use global basis functions and can be highly accurate for smooth solutions. For conservation laws, spectral viscosity methods provide a classical way to combine Fourier accuracy with nonlinear stability mechanisms \cite{ChenDuTadmor1993,MadayTadmor1989,Schochet1990,Tadmor1989,Tadmor1993}. In particular, Chen, Du, and Tadmor proved convergence and $L^1$ one-half order error estimates for multidimensional scalar periodic conservation laws within a semi-discrete Fourier spectral-viscosity framework \cite{ChenDuTadmor1993}. In that approach, a prescribed spectral-viscosity operator supplies the entropy dissipation missing from the standard Fourier method while remaining spectrally small on the resolved scales. Vanishing-viscosity approximations have also been used to construct and analyze semi-implicit Fourier spectral schemes for the incompressible Euler equations \cite{ChengLuoWang2024}.

The main novelty of this paper is a fully discrete vanishing-viscosity minimizing movement (VVMM) Fourier spectral method for the heat-postprocessed viscous reference flow. On each time slab, the approximation is defined as an exact minimizer of a viscous residual over a finite-dimensional space--time trial space, Fourier spectral in space and polynomial in time. Between successive slabs, a heat-kernel postprocessing operator is applied. Thus the method combines three ingredients: the vanishing-viscosity selection principle, global space--time residual minimization, and Fourier-diagonal heat smoothing.

For the unconstrained minimization used in the theoretical formulation, a smooth cut-off flux and a penalty measuring violations of the initial-data range are included. These devices make the finite-dimensional optimization problem well-posed without changing the target entropy solution or the final convergence rate.

This formulation leads to a concrete residual mechanism for treating the interaction between Fourier projection and the nonlinear flux. In general,
$$
P_N\nabla\cdot f(u)\neq \nabla\cdot f(P_Nu).
$$
The unresolved part of this residual is produced by the spatial Fourier truncation, while the time-slab construction may propagate high-frequency components from one slab endpoint to the next. The usual $L^1$-contraction or maximum principle arguments for the continuous equation do not apply directly after spectral truncation. The role of the vanishing-viscosity term and heat-kernel postprocessing is to restore, between successive slabs, a uniform smoothing scale on which the residual estimates can be closed.

This combination of vanishing viscosity, residual minimization, and heat-semigroup postprocessing gives a direct way to connect entropy-solution stability with Fourier spectral approximation. Writing $\ell_k:=1+\log k$, it leads to a fully discrete error estimate in $L^1$ of the form
$$
\|u_N^\varepsilon-u\|_{L^1}\le C\varepsilon^{1/2}+C\varepsilon^{-m-3\delta}\tau^m \ell_kk^{-m}
+C\varepsilon^{-m-3\delta}N^{-m},
$$
up to the precise norms and parameter conditions stated later. The three terms represent, respectively, the vanishing-viscosity, temporal interpolation, and Fourier truncation errors. The factor $\ell_k$ accounts for the logarithmic growth of the Chebyshev interpolation constants. Under suitable choices of $\varepsilon$, $\tau$, $N$, $k$, and $m$, this yields an explicit algebraic convergence rate in $L^1$. More explicitly, we take
$$
\varepsilon=N^{-\frac{1}{1+\eta}}\qquad\text{equivalently}\qquad N=\varepsilon^{-1-\eta},
$$
with an arbitrarily small fixed $\eta>0$, and choose $\tau\sim\varepsilon$. The temporal degree is chosen so that $\ell_k k^{-m}\le N^{-d-\vartheta}$ for a fixed $\vartheta>0$. The first term is then $O(N^{-1/(2(1+\eta))})$. By taking $\eta$ small relative to a prescribed $\gamma>0$, and then choosing $m$, $k$, and the auxiliary small exponents so that the temporal and spatial truncation terms are of the same or smaller order, the estimate yields
$$
\|u_N^\varepsilon-u\|_{L^1}\le C_\gamma N^{-1/2+\gamma}, \qquad \gamma>0.
$$

The main contributions of this paper are summarized as follows:

\begin{itemize}
\item \textbf{A residual-minimizing spectral framework.}
We construct a fully discrete VVMM Fourier spectral method by minimizing a viscous space--time residual on each time slab and applying heat-kernel postprocessing between successive slabs. In the theoretical formulation, a smooth cut-off flux and a penalty measuring violations of the initial-data range are used to make the unconstrained finite-dimensional minimization problem well-posed.

\item \textbf{Control of nonlinear projection residuals.} The analysis isolates the noncommutativity between Fourier projection and the nonlinear flux and shows how the minimizing movement step, together with heat smoothing, yields residual bounds that remain uniform across time slabs.

\item \textbf{A multidimensional $L^1$ error estimate.} We prove a fully discrete convergence estimate that holds in any fixed spatial dimension. The proof uses the $L^1$-stability of viscous conservation laws and does not require one-dimensional monotonicity or total-variation arguments beyond the basic entropy-solution framework.

\item \textbf{Numerical validation.} Numerical experiments in one and two dimensions illustrate stable shock capturing with no pronounced Gibbs-type oscillations in the reported tests and support the convergence behavior predicted by the theory.
\end{itemize}

The rest of the paper is organized as follows. Section~\ref{sectionmethod} defines the VVMM Fourier spectral method and states the main convergence theorem. {Section~\ref{sectionprop} collects analytic estimates for the viscous equation and proves the $O(\varepsilon^{1/2})$ error estimate for the heat-postprocessed viscous reference flow.} Section~\ref{sectionerroranal} proves the space--time residual estimate and the main theorem. Section~\ref{sectionnumexp} presents numerical experiments in one and two dimensions.

\section{The numerical method}\label{sectionmethod}

Fix a final time $T>0$ and a time-step length $\tau$ such that $M:=T/\tau\in\mathbb N$. We introduce the uniform partition
$$
0 = T_0 < T_1 < \cdots < T_M=T,\qquad T_i:= i\tau,
$$
and define the half-open subintervals
$$
I_i := (T_i, T_{i+1}],\qquad i=0,\ldots,M-1.
$$

On each time slab $I_i\times\mathbb{T}^d$, the approximation is chosen from a
finite-dimensional space--time trial space. For given integers $k,N\ge1$, set
\begin{equation}\label{eq:trialspace}
S_{k,N}=\operatorname{span}\left\{
\sum_{j=0}^k t^j v_j \; : \; v_j \in X
\right\},\qquad
X = \operatorname{span}\left\{
\sum_{|m|_\infty\le N} c_m \fe^{im\cdot x}
\right\},
\end{equation}
where $ |m|_\infty=\max_{1\le j\le d}|m_j|$. Thus $N$ is the
Fourier frequency cutoff, not the number of physical grid points. We denote by
$P_N$ the Fourier projection onto the spatial modes $|m|_\infty\le N$.
For each $i=0,\ldots,M-1$, define the left endpoint data
$$
w_0=P_N\fe^{\varepsilon^2\Delta}u_0,\qquad
w_i=\fe^{\varepsilon^2\Delta}u_{N,i-1}^\varepsilon(T_i),\quad i\ge1,
$$
and the endpoint trial set
$$
\widehat{\mathcal A}_{i,k,N}:=
\left\{
V\in S_{k,N}:\ V(T_i)=w_i
\right\}.
$$
 Moreover, we denote
\begin{equation}\label{u-u+}
u_-:=\operatorname*{ess\,inf}_{x\in\mathbb T^d}u_0(x),\qquad
u_+:=\operatorname*{ess\,sup}_{x\in\mathbb T^d}u_0(x).
\end{equation}
By the maximum principle, both the entropy solution to \eqref{periodic} and the viscous solution to \eqref{vis} remain in $[u_-,u_+]$ (see also Lemma~\ref{lemmaxpr} and Remark~\ref{rem2}). 
 
\begin{assumption}\label{localsmooth}
There exists a fixed constant $\kappa>0$, such that the nonlinear flux $f$ is smooth on the bounded interval $[u_- -\kappa,u_+ +\kappa]$.
\end{assumption}
 In the finite-dimensional minimization problem, this local assumption is implemented by using  a smooth cut-off flux $\widetilde f$ such that $\widetilde f=f$ on $[u_- -\kappa/2,u_+ +\kappa/2]$, with the cut-off supported inside $[u_- -\kappa,u_+ +\kappa]$,  and a penalty measuring violations of the initial-data range. The cutoff is therefore only a technical device for the unconstrained finite-dimensional minimization and does not modify the conservation law on the range attained by the exact solution.

We then define $u_{N,i}^\varepsilon\in S_{k,N}$ as a minimizer of the penalized cut-off viscous space--time residual over this endpoint trial set:
\begin{equation}\label{eq:min}
u_{N,i}^\varepsilon\in\operatorname*{arg\,min}_{V\in{\widehat{\mathcal A}_{i,k,N}}}\left[\Big\|\partial_t V + \nabla\cdot {\widetilde f(V)} - \varepsilon \Delta V\Big\|_{L^{1}(I_i\times\mathbb{T}^d)}+\Big\|\phi(V)\Big\|_{L^1(I_i\times\mathbb{T}^d)}\right],
\end{equation}
where $\phi(V)=\max\{0,V-u_+,u_--V\}$. The set $\widehat{\mathcal A}_{i,k,N}$ is a nonempty closed affine subspace of the finite-dimensional space $S_{k,N}$. Since $\widetilde f$ is smooth with globally bounded derivatives, the map
$$
V\mapsto \partial_tV+\nabla\cdot \widetilde f(V)-\varepsilon\Delta V
$$
is continuous on $S_{k,N}$. The penalty is also continuous. With $B=\max\{|u_-|,|u_+|\}$, the pointwise bound $\phi(V)\ge |V|-B$ yields
$$
\|\phi(V)\|_{L^1(I_i\times\mathbb T^d)}\ge \|V\|_{L^1(I_i\times\mathbb T^d)}-B\tau|\mathbb T^d|.
$$
By equivalence of norms in the finite-dimensional trial space, the continuous objective is coercive and therefore attains its minimum on the closed affine trial set.

The convergence analysis in this paper is for the ideal discrete object defined by \eqref{eq:min}, namely an exact global minimizer of the penalized cut-off continuous space--time residual over the finite-dimensional trial space.

By construction, the minimizers satisfy the endpoint relation
\begin{equation}\label{eq:continuity}
u_{N,0}^\varepsilon(T_0) = P_N\fe^{\varepsilon^2\Delta} u_0,\qquad
u_{N,i}^\varepsilon(T_i) = \fe^{\varepsilon^2\Delta}u_{N,i-1}^\varepsilon(T_i),\quad i=1,\ldots,M-1,
\end{equation}

We then assemble the piecewise-defined approximation on $[0,T]$ by
\begin{equation}\label{eq:globalUN}
u_N^\varepsilon(t,\cdot) := u_{N,i}^\varepsilon(t,\cdot), \qquad t\in I_i,
\end{equation}
We additionally set $u_N^\varepsilon(0)=w_0$. At each internal grid time $T_i$, the assembled value is the left trace $u_{N,i-1}^\varepsilon(T_i)$; the next slab starts from its heat-smoothed right trace, as prescribed in \eqref{eq:continuity}.

Throughout the paper, $I_k$ denotes degree-$k$ interpolation at the Chebyshev--Lobatto nodes
\begin{equation}\label{cheby}
t_{i,j}=T_i+\frac{\tau}{2}\left(1-\cos\frac{j\pi}{k}\right),\qquad j=0,\ldots,k.
\end{equation}
In particular, $(I_kw)(T_i)=w(T_i)$. We write $\ell_k=1+\log k$ for the logarithmic factor in the interpolation estimates. The operator $P_N$ denotes the Fourier projection in space. The notation $a+$ and $a-$ means $a+\vartheta$ and $a-\vartheta$, respectively, for an arbitrarily small $\vartheta>0$. Constants may depend on $T$, $d$, {$\widetilde f$}, $u_0$, $m$, {$\kappa$}, the fixed exponent $\delta>0$, and such arbitrarily small exponents, but are independent of $N$, $k$, $\tau$, $\varepsilon$, and the time-slab index unless otherwise stated. Moreover, due to the smoothing properties of the viscosity (see Proposition~\ref{proputxn}), in the following, we shall freely switch between $\bv$- and $W^{1,1}$-type estimates since they coincide for smooth (even not uniform with respect to $\varepsilon$) solutions.

\begin{theorem}\label{thm}
Let $u$ be the exact solution to \eqref{periodic}  with $f$ satisfying Assumption~\ref{localsmooth}, and let $u_N^\varepsilon$ be the numerical solution defined above. For every fixed $\gamma>0$, the parameters can be chosen so that, with
$$
N=\varepsilon^{-1-\eta}, \qquad \tau\sim \varepsilon,
\qquad \ell_k k^{-m}\le N^{-d-},
$$
one has
$$
\|u_N^\varepsilon(t)-u(t)\|_{L^1(\mathbb{T}^d)}
\le C_\gamma N^{-\frac12+\gamma}, \qquad 0\le t\le T,
$$
where $C_\gamma$ is independent of $N$, $k$, $\tau$, and $\varepsilon$.
\end{theorem}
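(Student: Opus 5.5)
The plan is to split the error through a heat-postprocessed viscous reference flow $\tilde u^\varepsilon$. This flow is built exactly like the numerical solution: on each slab it solves the viscous problem \eqref{vis}, and at each $T_i$ it is restarted from $\fe^{\varepsilon^2\Delta}$ applied to its left trace, with initial datum $\fe^{\varepsilon^2\Delta}u_0$. The triangle inequality then gives
$$
\|u_N^\varepsilon(t)-u(t)\|_{L^1}\le \|u(t)-\tilde u^\varepsilon(t)\|_{L^1}+\|\tilde u^\varepsilon(t)-u_N^\varepsilon(t)\|_{L^1}.
$$

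\textbf{Reference term.} For the first term I will invoke the result of Section~\ref{sectionprop}, which gives the bound $C\varepsilon^{1/2}$. Heuristically this holds because each heat step moves an $L^\infty\cap\bv$ function by at most $C\varepsilon\,\tv$ in $L^1$. With $M\sim T/\varepsilon$ steps (since $\tau\sim\varepsilon$), the accumulated heat perturbation is $O(1)\cdot\varepsilon$-small per step and sums to $O(\varepsilon)$ per unit time, given the uniform $\bv$ bounds. Combined with \eqref{visconv}, this yields $O(\varepsilon^{1/2})$.

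\textbf{Discrete term.} For the second term I will use the $L^1$-stability of viscous conservation laws on each slab, a Kuznetsov/Duhamel-type inequality. For a smooth $V$ with range in the region where $\widetilde f=f$, it reads
$$
\|\tilde u^\varepsilon(t)-V(t)\|_{L^1}\le \|\tilde u^\varepsilon(T_i)-V(T_i)\|_{L^1}+\|\partial_tV+\nabla\cdot f(V)-\varepsilon\Delta V\|_{L^1(I_i\times\mathbb T^d)}.
$$
The heat step is an $L^1$ contraction, so the endpoint errors propagate without growth. The error is therefore bounded by the initial projection error $\|(I-P_N)\fe^{\varepsilon^2\Delta}u_0\|_{L^1}$ plus the sum over slabs of the slab residuals. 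The penalty term plays two roles. It first controls the measure of range violation, and then, together with the minimization, keeps the cut-off flux comparable to $f$. The $\|\phi(V)\|$ contribution also enters the stability estimate as an additional residual.

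\textbf{Residual bound via a competitor.} Minimality lets me bound each slab residual by that of any competitor in $\widehat{\mathcal A}_{i,k,N}$. I will take the competitor
$$
V_i=I_kP_N\, v_i+(\text{correction matching } w_i),
$$
where $v_i$ is the viscous solution on $I_i$ started from $w_i$. Since $w_i=\fe^{\varepsilon^2\Delta}(\cdot)$ is already band-limited, $P_Nv_i(T_i)=w_i$ and the endpoint condition holds. The residual of $V_i$ is then estimated by three pieces: the Chebyshev interpolation error $C\ell_k(\tau/k)^m\|\partial_t^m v_i\|$, the Fourier truncation error $CN^{-m}\|v_i\|_{W^{m+1,1}}$, and the commutator $\nabla\cdot(\widetilde f(P_Nv_i)-P_N f(v_i))$. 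The regularity estimates for the viscous flow smoothed by $\fe^{\varepsilon^2\Delta}$ (Proposition~\ref{proputxn}) should give derivative bounds of order $\varepsilon^{-m-3\delta}$. Here the heat postprocessing is essential, because it restores the smoothing scale at each restart even though $w_i$ carries high-frequency debris. Summing over the $M\sim\varepsilon^{-1}$ slabs costs one further factor of $\varepsilon^{-1}$, which is absorbed into the $N^{-d-}$ condition and the choice of $m$. An $L^\infty$ bound of $V_i$ within $[u_--\kappa/2,u_++\kappa/2]$ is needed to use the local flux. I will obtain it from Sobolev embedding, which is where the factor $N^{d}$ in the condition $\ell_kk^{-m}\le N^{-d-}$ enters.

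\textbf{Parameter choice and main obstacle.} Finally I set $N=\varepsilon^{-1-\eta}$, choose $\eta$ small relative to $\gamma$, and then take $m$ large so that $\varepsilon^{-m-3\delta-1}N^{-m}=\varepsilon^{m\eta-3\delta-1}\le\varepsilon^{1/2}$; the temporal term is handled the same way. Together these give $C_\gamma N^{-1/2+\gamma}$. I expect the main obstacle to be the commutator and range control. The numerical iterate $u_{N,i}^\varepsilon$ is only a minimizer, not pointwise bounded, so applying the stability estimate with $f$ rather than $\widetilde f$ requires converting the $L^1$ residual-plus-penalty smallness into a genuine bound. My first attempt would be to run the stability argument directly with the cut-off flux $\widetilde f$, for which $\tilde u^\varepsilon$ is also a solution because its range lies in $[u_-,u_+]$. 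This would sidestep the need for pointwise bounds on the minimizer.
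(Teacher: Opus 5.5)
Your decomposition is the paper's: the triangle inequality through the heat-postprocessed splitting flow, $L^1$-stability with source run with the cut-off flux $\widetilde f$, and minimality against the competitor $I_kP_Nv_i$, where $v_i$ is the exact viscous flow started from $w_i$. Comparing $u_{N,i}^\varepsilon$ directly with the reference flow $\tilde u^\varepsilon$, instead of passing through $u_{N,i}^\varepsilon-I_kP_Nv_i$, is a harmless variant.

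\textbf{The gap.} You claim that Proposition~\ref{proputxn} gives the $\varepsilon^{-m-3\delta}$ derivative bounds for $v_i$ on every slab. That proposition requires $\|w_i\|_{W^{l+1,1}}+\|w_i\|_{W^{l,\infty}}\le A_l\varepsilon^{-l}$ for all $l$, with $A_l$ independent of $i$. For $i\ge1$ we have $w_i=\fe^{\varepsilon^2\Delta}u_N^\varepsilon(T_i)$. The heat step only converts a bound on $u_N^\varepsilon(T_i)$ in some base norm ($L^\infty$, $W^{1,1}$, or $L^1$) into derivative bounds. A minimizer has no a priori $L^\infty$ or $\bv$ bound: the penalty controls only the space--time $L^1$ size of range violations. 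The cut-off flux correctly removes pointwise bounds from the stability estimate, but it does nothing for the regularity of the next restart datum. So ``heat postprocessing restores the smoothing scale'' is not an argument by itself. Without uniform $A_l$, the local constants, and hence the sum over $M\sim\varepsilon^{-1}$ slabs, are uncontrolled. Even $i=0$ needs care, because $P_N$ is not uniformly bounded on $L^1$ or $L^\infty$. The paper uses that the heat-kernel tail over $|n|_\infty>N$ is negligible, since $N\varepsilon=\varepsilon^{-\eta}\to\infty$.

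\textbf{The missing idea} is an induction over slabs that couples the error estimate with the regularity. Let $E_i:=\|u_N^\varepsilon(T_i)-\tilde u^\varepsilon(T_i)\|_{L^1}$ (left traces) and $\rho_\varepsilon=\varepsilon^{-m-3\delta}(\tau^m\ell_kk^{-m}+N^{-m})$, and suppose $E_j\le C_*\rho_\varepsilon$ for $j\le i$. Split $w_i=\fe^{\varepsilon^2\Delta}\tilde u^\varepsilon(T_i)+\fe^{\varepsilon^2\Delta}\bigl(u_N^\varepsilon(T_i)-\tilde u^\varepsilon(T_i)\bigr)$. The first part inherits the uniform $L^\infty$ and $\bv$ bounds of the reference flow. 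Heat-kernel estimates bound the second by $\varepsilon^{-l-1}E_i$ in $W^{l+1,1}$ and by $\varepsilon^{-l-d}E_i$ in $W^{l,\infty}$. The hypothesis therefore propagates with fixed $A_l$ once $C_*\rho_\varepsilon\le\varepsilon^d$, i.e.\ $\rho_\varepsilon=o(\varepsilon^d)$. This is exactly what $m\eta>d$, small $\delta$, and $\ell_kk^{-m}\le N^{-d-}$ guarantee. That, and not a Sobolev-embedding bound on the range of $V_i$ (unnecessary with $\widetilde f$), is where $N^{-d-}$ comes from. It is also why $m$ must satisfy $m\eta-3\delta>d$, not merely produce $\varepsilon^{1/2}$.

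\textbf{Smaller points.}
\begin{itemize}
\item The penalty term $\|\phi(V_i)\|$ in the minimality inequality must be bounded. Using $\phi(V_i)\le|V_i-v_i|+|v_i-\tilde u^\varepsilon|$ gives $\tau\|V_i-v_i\|_{L^\infty(I_i;L^1)}+\tau E_i$. So errors grow by a factor $(1+\tau)$ per slab, closed by discrete Gronwall, rather than propagating ``without growth''.
\item The extra $\varepsilon^{-1}$ from summing slabs is spurious, because each slab residual in $L^1(I_i\times\mathbb T^d)$ carries a factor $\tau$.
\item Your heuristic for the reference term miscounts. $M\sim\varepsilon^{-1}$ heat steps, each of size $C\varepsilon\,\tv(u_0)$, sum to $O(1)$, not $O(\varepsilon)$. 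The rate in Proposition~\ref{prop:splitting-flow} comes from the doubling argument. There $|u-\fe^{\varepsilon^2\Delta}v|\le\fe^{\varepsilon^2\Delta}|u-v|$ makes each heat step cost only $O(\varepsilon^2/h)$ for mollification width $h$, and $h=\sqrt\varepsilon$ then gives $\varepsilon^{1/2}$.
\end{itemize}
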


\begin{remark}\label{rem:parameter-coupling}
The parameter assumptions in Theorem~\ref{thm} have distinct roles. The condition $N=\varepsilon^{-1-\eta}$ means that the Fourier cutoff is slightly above the viscous scale, so that $N\varepsilon\to\infty$ and the heat-kernel tail beyond the resolved modes is negligible. The relation $\tau\sim\varepsilon$ balances the growth of time derivatives of the viscous solution with the factor $\tau^m$ in the temporal interpolation error. The integer $m$ is then chosen sufficiently large so that the spatial truncation error and the regularity bootstrap can absorb the factors depending on $d$, $\eta$, and $\delta$. Finally, $\ell_kk^{-m}\le N^{-d-}$, i.e. $\ell_kk^{-m}\le N^{-d-\vartheta}$ for an arbitrarily small fixed $\vartheta>0$, is the temporal resolution condition used in the bootstrap after heat postprocessing. For a prescribed $\gamma>0$, the choices are made in the following order: first choose $\eta=\eta(\gamma)>0$ sufficiently small, then choose $m=m(\gamma,d)$ sufficiently large, then choose $\delta=\delta(\gamma)>0$ sufficiently small, and finally choose $k\ge m+1$ so that $\ell_kk^{-m}\le N^{-d-\vartheta}$. The constant $C_\gamma$ may depend on $T$, $d$, $f$, $u_0$, and these auxiliary choices, but not on $N$, $k$, $\tau$, or $\varepsilon$. Here $m$ denotes the derivative order used in the interpolation estimate, while $k$ is the temporal polynomial degree.
\end{remark}

\section{Basic properties of the solution with viscosity}\label{sectionprop}

In this section, we recall some standard properties of the viscous conservation laws~\eqref{vis}, and revisit Kru\v{z}kov's doubling variable (see, for example, \cite{BoucPer1998,Kruzkov1970}) argument to show that the operators $\fe^{\varepsilon^2\Delta}$ in \eqref{eq:continuity} do not ruin the convergence order of the viscous solution.

\subsection{\texorpdfstring{Some properties of \eqref{vis}}{Some properties of the viscous problem}}

\begin{lemma}[Maximal $L^p-L^q$ regularity]\label{lemheat}
If $1<q\le p<\infty$, $s\in\mathbb N_0$, and $0<\varepsilon\le1$, then for the heat equation
$$
u_t-\varepsilon\Delta u=g(u,t,x),\quad u(0,x)=u_0(x),
$$
defined on $\mathbb{T}^d$, we have
$$
\|u_t\|_{L^pW^{s,q}}+\varepsilon\|u\|_{L^pW^{2+s,q}}\lesssim\|g\|_{L^pW^{s,q}}+\varepsilon^{1-\frac1p}\|u_0\|_{W^{s+2-\frac2p,q}}.
$$
\end{lemma}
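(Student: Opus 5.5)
The plan is to reduce the estimate to the classical maximal $L^p$-regularity of the heat operator $\partial_t-\Delta$ on $\mathbb T^d$ by rescaling time, and to track the powers of $\varepsilon$ that this produces.

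\textbf{Step 1: the case $\varepsilon=1$.} Split $u=v+w$, where $v$ solves the inhomogeneous problem $v_t-\Delta v=g$, $v(0)=0$, and $w=\fe^{t\Delta}u_0$. Since $g$ enters only as a given right-hand side (we regard $g(u,t,x)$ as a fixed function of $(t,x)$), the problem is linear.

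For $v$ we invoke the standard maximal regularity theorem for $\partial_t-\Delta$ on $L^q(\mathbb T^d)$, $1<q<\infty$, on a finite interval. This can be obtained from the Mikhlin (or Marcinkiewicz) multiplier theorem on $\mathbb R\times\mathbb T^d$, or from $\mathcal R$-boundedness and Weis' theorem. It gives
$$
\|v_t\|_{L^pL^q}+\|\Delta v\|_{L^pL^q}\lesssim\|g\|_{L^pL^q}.
$$
Because $\partial^\alpha$ commutes with the equation, the same bound holds with $W^{s,q}$ in place of $L^q$.

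We pass from $\Delta v$ to the full $W^{2+s,q}$ norm in two pieces:
\begin{itemize}
\item the nonzero Fourier modes, using elliptic regularity on the torus;
\item the zero mode, using $\hat v_0(t)=\int_0^t\hat g_0$, which is controlled by $\|g\|$ on a finite time interval $[0,T]$.
\end{itemize}

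For $w$ we use the trace characterization of maximal regularity. The semigroup part satisfies
$$
\|w_t\|_{L^pW^{s,q}}+\|w\|_{L^pW^{s+2,q}}\lesssim\|u_0\|_{B^{s+2-2/p}_{q,p}},
$$
and $B^{s+2-2/p}_{q,p}$ is dominated by the Sobolev--Slobodeckij norm in $W^{s+2-2/p,q}$ when $q\le p$. This embedding is the reason for the hypothesis $q\le p$.

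\textbf{Step 2: rescaling in $\varepsilon$.} Set $\tilde u(s,x)=u(s/\varepsilon,x)$ on $[0,\varepsilon T]$. Then
$$
\tilde u_s-\Delta\tilde u=\varepsilon^{-1}\tilde g,\qquad \tilde g(s,x)=g(s/\varepsilon,x).
$$
In the $L^p$-in-time norms a change of variables gives
$$
\|\tilde h\|_{L^p(0,\varepsilon T)}=\varepsilon^{1/p}\|h\|_{L^p(0,T)}.
$$
Apply Step 1 on the shorter interval $[0,\varepsilon T]\subset[0,T]$. This requires the constant to be uniform over intervals of length at most $T$, which holds because the constant is monotone in the interval length. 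The result is
$$
\varepsilon^{1/p}\big(\varepsilon\|u_t\|+\|u\|_{L^pW^{2+s,q}}\big)\lesssim\varepsilon^{1/p-1}\|g\|+\|u_0\|_{W^{s+2-2/p,q}}.
$$
Multiplying by $\varepsilon^{1-1/p}$ yields exactly the claimed inequality.

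\textbf{Main obstacle.} The delicate point is the zero Fourier mode, together with uniformity of the constant in the time horizon. The operator $-\Delta$ is not invertible on the torus, so the maximal regularity constant on $(0,T')$ grows with $T'$ and cannot be taken on $(0,\infty)$. I would handle this by treating the mean separately, where the relevant quantities are exact integrals, and by working with $-\Delta+1$ on the mean-zero part, where the constant is uniform for all $T'\le T$. The factor $\varepsilon\le1$ guarantees that the rescaled interval $[0,\varepsilon T]$ never exceeds length $T$.
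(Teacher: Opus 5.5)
Your proof is correct and follows the same route as the paper, which cites maximal $L^p$--$L^q$ regularity (Amann, Lunardi), identifies the initial trace space $B^{s+2-2/p}_{q,p}$, and uses $q\le p$ to dominate it by $W^{s+2-2/p,q}$; your rescaling $\tilde u(\sigma,x)=u(\sigma/\varepsilon,x)$ on $[0,\varepsilon T]\subset[0,T]$, together with monotonicity of the constant in the interval length, makes explicit the $\varepsilon$-dependence that the paper leaves implicit. Two small corrections: since $\tilde u_\sigma=\varepsilon^{-1}u_t(\sigma/\varepsilon)$, the rescaled bound should read $\varepsilon^{1/p}\bigl(\varepsilon^{-1}\|u_t\|+\|u\|_{L^pW^{2+s,q}}\bigr)\lesssim\varepsilon^{1/p-1}\|g\|+\|u_0\|_{W^{s+2-2/p,q}}$ (with your $\varepsilon\|u_t\|$, multiplying by $\varepsilon^{1-1/p}$ would only control $\varepsilon^2\|u_t\|$); and for $p=2$ the order $s+1$ is an integer, so the embedding you need is the integer-order $W^{s+1,q}\hookrightarrow B^{s+1}_{q,2}$, which holds because $q\le 2$, rather than the Sobolev--Slobodeckij identity $W^{r,q}=B^r_{q,q}$.
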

The proof can be found, for instance, in \cite{Amann1984, Lunardi1989}. The initial trace space is $B^{s+2-2/p}_{q,p}$. For $p\ne2$, the order $r=s+2-2/p$ is noninteger, and
$$
W^{r,q}=B^r_{q,q}\hookrightarrow B^r_{q,p},\qquad q\le p,
$$
where noninteger Sobolev spaces are understood in the Sobolev--Slobodeckij sense. For $p=2$, the required embedding is instead $W^{s+1,q}\hookrightarrow B^{s+1}_{q,2}$, valid for $q\le2$. Thus the initial data may be measured in the stated Sobolev norm.

\begin{lemma}[Product estimate]\label{lem:kp}
Let $m\in\mathbb{N}$, $1<q<\infty$, and let
$1\le r,r_1,r_2,r_3,r_4\le\infty$, $1<q_1,q_2,q_3,q_4\le\infty$ satisfy
$$
\frac1r=\frac1{r_1}+\frac1{r_2}=\frac1{r_3}+\frac1{r_4},
\qquad
\frac1q=\frac1{q_1}+\frac1{q_2}=\frac1{q_3}+\frac1{q_4}.
$$
Then
$$
\begin{aligned}
\|uv\|_{L^r(0,T;W^{m,q})}
\le C\Big(&\|u\|_{L^{r_1}(0,T;W^{m,q_1})}
\|v\|_{L^{r_2}(0,T;L^{q_2})}\\
&+\|u\|_{L^{r_3}(0,T;L^{q_3})}
\|v\|_{L^{r_4}(0,T;W^{m,q_4})}\Big).
\end{aligned}
$$
The constant depends only on $m,d,q,r$ and the exponents $q_i,r_i$, $1\le i\le4$. This is the integer-order form of the Kato--Ponce product estimate; see \cite{BourgLi2014,KatoPonce1988}. For a discussion of the periodic version, see \cite{LiWu2026}.
\end{lemma}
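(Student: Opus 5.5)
The plan is to reduce the estimate to the Leibniz rule, Hölder's inequality in space and time, and the Gagliardo--Nirenberg interpolation inequality on $\mathbb T^d$, with a final elementary Young-type inequality. The whole argument is carried out at fixed $t$ in space first, and then integrated in time.

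\textbf{Leibniz expansion and extreme terms.} By definition, $\|uv\|_{W^{m,q}}$ is controlled by $\sum_{|\alpha|\le m}\|\partial^\alpha(uv)\|_{L^q}$. For each multi-index $\alpha$ I would expand
$$\partial^\alpha(uv)=\sum_{\beta\le\alpha}\binom{\alpha}{\beta}\partial^\beta u\,\partial^{\alpha-\beta}v.$$
The extreme terms are handled directly. For $\beta=\alpha$, Hölder with $1/q=1/q_1+1/q_2$ in space and $1/r=1/r_1+1/r_2$ in time gives the bound $\|u\|_{L^{r_1}W^{m,q_1}}\|v\|_{L^{r_2}L^{q_2}}$. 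For $\beta=0$ the same argument with the pairs $(q_3,q_4)$ and $(r_3,r_4)$ gives the second product.

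\textbf{Mixed terms by interpolation.} For a mixed term with $|\beta|=j$, $|\alpha-\beta|=l$, $0<j$, $j+l\le m$, set $\theta=j/m\in(0,1)$. Define $a,b$ by
$$\frac1a=\frac{\theta}{q_1}+\frac{1-\theta}{q_3},\qquad \frac1b=\frac{1-\theta}{q_4}+\frac{\theta}{q_2}.$$
Then $\frac1a+\frac1b=\theta\bigl(\frac1{q_1}+\frac1{q_2}\bigr)+(1-\theta)\bigl(\frac1{q_3}+\frac1{q_4}\bigr)=\frac1q$.

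Since $l\le m-j$, the periodic inhomogeneous Gagliardo--Nirenberg inequality gives, at fixed $t$,
$$\|\partial^\beta u\|_{L^a}\lesssim \|u\|_{W^{m,q_1}}^{\theta}\|u\|_{L^{q_3}}^{1-\theta},\qquad \|\partial^{\alpha-\beta}v\|_{L^b}\lesssim \|v\|_{W^{m,q_4}}^{1-\theta}\|v\|_{L^{q_2}}^{\theta}.$$
If $l<m-j$, I would first bound $\partial^{\alpha-\beta}v$ by a derivative of order $m-j$ in $L^b$, using the full inhomogeneous norm. On the torus the lower-order term in Gagliardo--Nirenberg (the mean, or the $L^{q}$ part) is absorbed because the right-hand sides already contain full $W^{m}$ norms. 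The known exceptional cases of the inequality only obstruct $\theta=1$, and the $L^1$ endpoint is excluded by the hypothesis $q_i>1$, so both are avoided here.

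Hölder in space then yields
$$\|\partial^\beta u\,\partial^{\alpha-\beta}v\|_{L^q}\lesssim (A_1B_2)^{\theta}(B_3A_4)^{1-\theta},$$
where $A_1=\|u\|_{W^{m,q_1}}$, $B_2=\|v\|_{L^{q_2}}$, $B_3=\|u\|_{L^{q_3}}$, $A_4=\|v\|_{W^{m,q_4}}$, all evaluated at time $t$.

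\textbf{Time integration.} I would take the $L^r$ norm in time and apply Hölder with exponents $r/\theta$ and $r/(1-\theta)$:
$$\bigl\|(A_1B_2)^\theta(B_3A_4)^{1-\theta}\bigr\|_{L^r}\le \|A_1B_2\|_{L^r}^{\theta}\|B_3A_4\|_{L^r}^{1-\theta}.$$
Each factor is bounded by Hölder in time with $1/r=1/r_1+1/r_2=1/r_3+1/r_4$. Finally $x^\theta y^{1-\theta}\le x+y$ gives the claimed sum.

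\textbf{Main obstacle.} I expect the only delicate step to be justifying Gagliardo--Nirenberg on $\mathbb T^d$ uniformly over the allowed ranges, including $q_i=\infty$ and the case $l<m-j$. I would cite the periodic version from \cite{LiWu2026}, or derive it from the $\mathbb R^d$ version by applying it to a smooth cutoff of the periodic extension. The constants then depend only on $m,d$ and the exponents, as claimed.
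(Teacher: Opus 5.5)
Your proof is correct, but it takes a different route from the paper. The paper gives no argument for this lemma; it simply invokes the Kato--Ponce fractional Leibniz rule \cite{KatoPonce1988}, its $L^\infty$ endpoint version \cite{BourgLi2014}, and a periodic transference \cite{LiWu2026}. You instead prove the integer-order statement directly:
\begin{itemize}
\item a Leibniz expansion, with H\"older handling the extreme terms $\beta=\alpha$ and $\beta=0$ for every $|\alpha|\le m$;
\item the scale-invariant Gagliardo--Nirenberg inequality at $\theta=j/m$ for the mixed terms, with $a,b$ chosen so that $1/a+1/b=1/q$;
\item H\"older in time with exponents $r/\theta$ and $r/(1-\theta)$, followed by $x^\theta y^{1-\theta}\le x+y$.
\end{itemize}
This is the classical Moser-type argument. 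It is self-contained, and it fits the statement better at the endpoints $q_1=\infty$ or $q_4=\infty$, where $W^{m,\infty}$ is defined through classical derivatives. The cited estimates are phrased with $J^m=(1-\Delta)^{m/2}$, and for odd $m$ the norm $\|J^m u\|_{L^\infty}$ is not in general controlled by $\|u\|_{W^{m,\infty}}$. The citation, in turn, covers fractional orders, which this lemma never needs.

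One justification in your proposal should be corrected. The lower-order term in the periodic Gagliardo--Nirenberg inequality is not absorbed merely because a full $W^{m,q_1}$ norm is present. If that term is taken as $\|u\|_{L^{q_3}}$, absorbing it would require $W^{m,q_1}\hookrightarrow L^{q_3}$, which fails, e.g., for $q_3=\infty$ and $mq_1<d$. Any of the following fixes it:
\begin{itemize}
\item take the lower-order term in $L^1$, since $\|u\|_{L^1}\lesssim\|u\|_{L^{q_1}}^{\theta}\|u\|_{L^{q_3}}^{1-\theta}$ on the torus;
\item subtract the mean, which is allowed because $j\ge1$;
\item use your cutoff route, which gives the inhomogeneous bound directly, because $\|D^m(\chi\tilde u)\|_{L^{q_1}(\mathbb R^d)}\lesssim\|u\|_{W^{m,q_1}(\mathbb T^d)}$ and $\|\chi\tilde u\|_{L^{q_3}(\mathbb R^d)}\lesssim\|u\|_{L^{q_3}(\mathbb T^d)}$.
\end{itemize}
Likewise, for $l<m-j$ you should state how the rest of $\|v\|_{W^{m-j,b}}$ is bounded. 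The zero-order part follows from log-convexity, $\|v\|_{L^b}\le\|v\|_{L^{q_4}}^{1-\theta}\|v\|_{L^{q_2}}^{\theta}$. Intermediate orders follow from $\|D^iv\|_{L^b}\lesssim\|D^{m-j}v\|_{L^b}+\|v\|_{L^b}$.
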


Then, we will recall the famous $L^1$-contraction properties of \eqref{vis}.

\begin{lemma}\label{lemcontra}
Let $u_0,~v_0\in L^\infty\cap\bv(\mathbb{T}^d)$, and let $u^\varepsilon,~v^\varepsilon$ be the unique solution to the viscous conservation laws~\eqref{vis} with initial data $u_0$ and $v_0$, respectively. Then, for any $t\in[0,T]$, we have
$$
\|u^\varepsilon(t)-v^\varepsilon(t)\|_{L^1}\le\|u_0-v_0\|_{L^1}.
$$
\end{lemma}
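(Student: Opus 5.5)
The plan is the classical Kružkov-type argument for parabolic equations: derive an integrated $L^1$ inequality for the difference $w:=u^\varepsilon-v^\varepsilon$ using a smooth approximation of the sign function, then let the approximation become sharp. Since $\varepsilon>0$, both solutions are smooth for $t>0$ (by the smoothing properties of the viscous problem, cf. Proposition~\ref{proputxn}) and bounded in $[u_-,u_+]$ by the maximum principle (Lemma~\ref{lemmaxpr}). The flux $f$ is therefore only evaluated on a bounded interval where it is smooth and Lipschitz, with constant $L$. Subtracting the two equations gives
$$
\partial_t w+\nabla\cdot\big(f(u^\varepsilon)-f(v^\varepsilon)\big)=\varepsilon\Delta w .
$$

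First I fix a convex $C^2$ approximation $\eta_\delta$ of $|\cdot|$, for example $\eta_\delta(s)=\sqrt{s^2+\delta^2}-\delta$. It satisfies $|\eta_\delta'|\le1$, $\eta_\delta''\ge0$, and $s\,\eta_\delta''(s)\to0$ boundedly as $\delta\to0$; for instance, $\eta''_\delta(s)\le C/\delta$ and $\eta''_\delta$ is concentrated where $|s|\lesssim\delta$. I multiply the difference equation by $\eta_\delta'(w)$ and integrate over $\mathbb T^d$, using periodicity to drop boundary terms.

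The viscous term then gives
$$
-\varepsilon\int\eta_\delta''(w)|\nabla w|^2\,dx\le0 .
$$
The flux term, after integration by parts, becomes
$$
\int \eta_\delta''(w)\,\nabla w\cdot\big(f(u^\varepsilon)-f(v^\varepsilon)\big)\,dx,
$$
which is bounded in absolute value by $L\int \eta_\delta''(w)\,|w|\,|\nabla w|\,dx$.

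The main obstacle is showing that this flux term vanishes as $\delta\to0$. I plan to use two facts. First, $|w|\eta_\delta''(w)\le C$ uniformly in $\delta$. Second, this product tends to $0$ pointwise off the set $\{w=0\}$, while $\nabla w=0$ a.e. on $\{w=0\}$. Together with $\nabla w\in L^1$ on $[t_0,t]\times\mathbb T^d$ for each $t_0>0$, which holds by smoothness, dominated convergence sends the flux term to zero. An alternative is to keep the dissipation term and absorb the flux term via Young's inequality, which leaves a remainder of size $\varepsilon^{-1}L^2\int \eta_\delta'' w^2 \lesssim \varepsilon^{-1}\delta$; this also tends to zero.

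Integrating in time from $t_0$ to $t$ and letting $\delta\to0$ then yields
$$
\|w(t)\|_{L^1}\le\|w(t_0)\|_{L^1}.
$$
Finally I send $t_0\to0$, using the continuity $u^\varepsilon(t_0)\to u_0$ and $v^\varepsilon(t_0)\to v_0$ in $L^1$. For $\bv$ data this follows from the mild (Duhamel) formulation with the heat semigroup, since $\|e^{\varepsilon t\Delta}u_0-u_0\|_{L^1}\lesssim \sqrt{\varepsilon t}\,\tv(u_0)$ and the nonlinear Duhamel term is $O(\sqrt t)$ in $L^1$. This gives the claimed $L^1$-contraction.
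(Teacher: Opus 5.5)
Your proposal is correct and follows essentially the same route as the paper: regularize $|s|$ by $\sqrt{s^2+\delta^2}$, test the difference equation with its derivative, integrate by parts, and dispose of the flux term. Your ``alternative'' for that term (Young's inequality absorbed into the viscous dissipation, leaving an $O(\delta/\varepsilon)$ remainder) is exactly the paper's argument, and your extra step of integrating from $t_0>0$ and then letting $t_0\to0$ via $L^1$-continuity at the initial time is a bit of care the paper skips by integrating directly from $t=0$.
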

\begin{proof}
We first set
$$
\gamma_\delta(s)=\sqrt{s^2+\delta^2},
$$
then we have
$$
\gamma'_\delta(s)=\frac{s}{\sqrt{s^2+\delta^2}},
$$
thus we have for any $s\in\mathbb{R}$ that
$$
\lim\limits_{\delta\to0}\gamma_\delta(s)=
\left\{\begin{aligned}
&-s, \quad s\leq0,\\&s, \quad s>0.
\end{aligned}\right.
$$
Moreover, we have for any $s\in\mathbb{R}$ that
$$
\gamma_\delta''(s)=\frac{\delta^2}{(s^2+\delta^2)^\frac32}>0.
$$
Noting that from \eqref{vis} we have
$$
(u^\varepsilon-v^\varepsilon)_t+\nabla\cdot\big(a(t,x)(u^\varepsilon-v^\varepsilon)\big)-\varepsilon\Delta(u^\varepsilon-v^\varepsilon)=0,
$$
where
$$
a(t,x)=\int_0^1f'\big(v^\varepsilon+\theta(u^\varepsilon-v^\varepsilon)\big)\,d\theta,
$$
then $|a|$ is bounded. By testing $\gamma'_\delta(u^\varepsilon-v^\varepsilon)$ on this equation, and denote $w=u^\varepsilon-v^\varepsilon$, we obtain
\begin{equation}\label{dtintrv}
\frac{d}{dt}\int_{\mathbb{T}^d}\gamma_\delta(w)\,dx=-\int_{\mathbb{T}^d}\gamma'_\delta(w)\nabla\cdot(aw)\,dx+\varepsilon\int_{\mathbb{T}^d}\gamma'_\delta(w)\Delta w\,dx.
\end{equation}
By Green's formula, we obtain from \eqref{dtintrv} that
$$
\frac{d}{dt}\int_{\mathbb{T}^d}\gamma_\delta(w)\,dx=\int_{\mathbb{T}^d}\gamma_\delta''(w)wa\cdot\nabla w\,dx-\varepsilon\int_{\mathbb{T}^d}\gamma_\delta''(w)|\nabla w|^2\,dx.
$$
By the inequality
$$
ab\leq\frac\varepsilon2a^2+\frac1{2\varepsilon}b^2
$$
we obtain
\begin{align*}
\frac{d}{dt}\int_{\mathbb{T}^d}\gamma_\delta(w)\,dx&\leq\frac1{2\varepsilon}\sup\limits_{t,x}|a(t,x)|^2\int_{\mathbb{T}^d}\gamma_\delta''(w)w^2\,dx-\frac\varepsilon2\int_{\mathbb{T}^d}\gamma_\delta''(w)|\nabla w|^2\,dx\\
&\le\frac1{2\varepsilon}\sup\limits_{t,x}|a(t,x)|^2\int_{\mathbb{T}^d}\frac{\delta^2w^2}{(w^2+\delta^2)^\frac32}\,dx\lesssim\frac\delta\varepsilon.
\end{align*}
 This implies
$$
\int_{\mathbb{T}^d}\gamma_\delta(w)(t)\,dx\le\|u_0-v_0\|_{L^1}+C\varepsilon^{-1}\delta,
$$
where $C$ is a positive constant that depends on $f$ and $T$.
Note that since
$$
\int_{\mathbb{T}^d}\gamma_\delta(w)(t)\,dx-\|w(t)\|_{L^1}=\int_{\mathbb{T}^d}\Big|\sqrt{w^2(t)+\delta^2}-|w(t)|\Big|\,dx\lesssim\delta,
$$
by taking the limit $\delta\to0$ we obtain
$$
\|w(t)\|_{L^1}=\lim\limits_{\delta\to0}\int_{\mathbb{T}^d}\gamma_\delta(w)(t)\,dx\le\|u_0-v_0\|_{L^1}+\lim\limits_{\delta\to0}C\varepsilon^{-1}\delta=\|u_0-v_0\|_{L^1}.
$$
This concludes the proof.
\end{proof}

We next recall some standard results for the viscous conservation laws~\eqref{vis}.

\begin{lemma}\label{lemmaxpr}
For the viscous conservation laws~\eqref{vis}, we have for any $t\in[0,T]$ that
\begin{align}
\label{rangepr}&
u_-\le u^\varepsilon(t,x)\le u_+\qquad\mbox{a.e. }x\in\mathbb T^d,\\
\label{maxpr}&\|u^\varepsilon(t)\|_{L^\infty}\leq\|u_0\|_{L^\infty},\\
\label{uxl1}&\|u^\varepsilon(t)\|_{W^{1,1}}\leq \|u_0\|_{W^{1,1}},\\
\label{utl1}&\|u^\varepsilon_t(t)\|_{L^1}\le\sup\limits_{u_-\le s\le u_+}|f'(s)|\|u_0\|_{W^{1,1}}+\varepsilon\|u_0\|_{W^{2,1}},
\end{align}
where $u_-$ and $u_+$ are defined in \eqref{u-u+}.
\end{lemma}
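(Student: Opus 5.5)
We prove the four statements of Lemma~\ref{lemmaxpr} in order. Throughout we first take $u_0$ smooth, so that $u^\varepsilon$ is a classical solution. For general $u_0\in L^\infty\cap\bv$ we then approximate by mollified data $u_0*\rho_\sigma$. Mollification preserves $[u_-,u_+]$ and does not increase the $L^\infty$, $L^1$ or total-variation norms. We pass to the limit using the $L^1$-contraction of Lemma~\ref{lemcontra} together with lower semicontinuity of these norms. For general data, $\|u_0\|_{W^{1,1}}$ is read as the $\bv$ norm, as the paper allows.

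\textbf{Range bound \eqref{rangepr} and \eqref{maxpr}.} Constant functions $c$ solve \eqref{vis}, so we compare $u^\varepsilon$ with $u_+$. Write $w=u^\varepsilon-u_+$. As in the proof of Lemma~\ref{lemcontra}, $w$ satisfies
$$
w_t+\nabla\cdot(a w)-\varepsilon\Delta w=0
$$
with bounded $a$. We test this equation with a smooth convex approximation of $s\mapsto s^+$, for instance $\tfrac12(\gamma_\delta(s)+s)$. The computation is the same as in Lemma~\ref{lemcontra} and gives
$$
\frac{d}{dt}\int (w)^+\,dx\le 0 .
$$
Since $w(0)\le0$, we get $w\le0$ for all $t$. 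The symmetric argument with $u_-$ gives the lower bound. Together these are \eqref{rangepr}, and \eqref{maxpr} follows immediately. Alternatively, one can apply Lemma~\ref{lemcontra} directly, using $\|(u-v)^+\|_{L^1}$-contraction against the constant solution.

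\textbf{Bound \eqref{uxl1}.} The $L^1$ part comes from Lemma~\ref{lemcontra} with $v_0=0$, after shifting so that $f(0)$ is irrelevant: constant solutions are preserved. One can also argue via conservation and the sign structure, using Lemma~\ref{lemcontra} with the constant solution $v\equiv0$, which gives $\|u^\varepsilon(t)\|_{L^1}\le\|u_0\|_{L^1}$.

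For the gradient, we use translation invariance on $\mathbb T^d$. The function $u^\varepsilon(t,\cdot+he_j)$ solves \eqref{vis} with data $u_0(\cdot+he_j)$. Lemma~\ref{lemcontra} therefore gives
$$
\|u^\varepsilon(t,\cdot+he_j)-u^\varepsilon(t)\|_{L^1}\le\|u_0(\cdot+he_j)-u_0\|_{L^1}.
$$
Dividing by $|h|$ and letting $h\to0$ yields $\|\partial_ju^\varepsilon(t)\|_{L^1}\le\|\partial_ju_0\|_{L^1}$. Summing over $j$ gives \eqref{uxl1}.

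\textbf{Bound \eqref{utl1}.} This step is the main obstacle, since time translation is not a symmetry of the initial data. We handle it by time-shift contraction. The function $u^\varepsilon(\cdot+h)$ solves \eqref{vis} with data $u^\varepsilon(h)$, so Lemma~\ref{lemcontra} gives
$$
\|u^\varepsilon(t+h)-u^\varepsilon(t)\|_{L^1}\le\|u^\varepsilon(h)-u_0\|_{L^1}.
$$
Dividing by $h$ and letting $h\to0$ gives $\|u_t^\varepsilon(t)\|_{L^1}\le\|u_t^\varepsilon(0)\|_{L^1}$. The latter is evaluated from the equation:
$$
u_t^\varepsilon(0)=-f'(u_0)\cdot\nabla u_0+\varepsilon\Delta u_0 .
$$
Its $L^1$ norm is at most $\sup_{[u_-,u_+]}|f'|\,\|u_0\|_{W^{1,1}}+\varepsilon\|u_0\|_{W^{2,1}}$. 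The restriction of the supremum of $|f'|$ to $[u_-,u_+]$ is justified by \eqref{rangepr}. This is \eqref{utl1}.

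The delicate points are two: justifying the limit $h\to0$, which needs smoothness of $u^\varepsilon$ up to $t=0$ for smooth data, and the density argument when $u_0\notin W^{2,1}$. In the latter case \eqref{utl1} is understood with the right-hand side possibly infinite.
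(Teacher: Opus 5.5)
Your argument is correct, and it goes a different way from the paper. The paper gives no proof here, only citations to Kru\v{z}kov and to Bardos--Le~Roux--N\'ed\'elec. There, bounds of this type are classically obtained from the maximum principle and by testing the \emph{differentiated} equations for $\partial_{x_j}u^\varepsilon$ and $\partial_tu^\varepsilon$ with regularized sign functions; on the bounded domains of Bardos et al.\ spatial translation is not available. You instead reduce all four bounds to Lemma~\ref{lemcontra}. You use only that constants, spatial translates and time translates of solutions are again solutions on $\mathbb T^d$.

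This route, contraction plus symmetries in the spirit of Crandall--Tartar, is economical and self-contained within the paper. It does, however, depend on periodicity and on the equation being autonomous. Unlike the differentiated-equation approach, it does not extend to the higher-derivative bounds of Proposition~\ref{proputxn}: the nonlinear contraction controls only first differences.

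Three small remarks.
\begin{enumerate}
\item The range bound \eqref{rangepr} already follows from Lemma~\ref{lemcontra} plus conservation of $\int u^\varepsilon$, without the one-sided computation. With $v\equiv u_+$ one gets $\int|u^\varepsilon(t)-u_+|\le\int(u_+-u_0)=\int(u_+-u^\varepsilon(t))$, which forces $(u^\varepsilon-u_+)^+=0$.
\item If $\|\nabla u\|_{L^1}$ is meant as $\int|\nabla u|$ with the Euclidean norm, as for $\tv$, summing over coordinate directions gives a different quantity. Instead, translate in every direction $\nu\in S^{d-1}$ and average, using $\int_{S^{d-1}}|\xi\cdot\nu|\,d\sigma(\nu)=c_d|\xi|$.
\item In the density step for \eqref{utl1}, pass the time-Lipschitz bound $\|u^\varepsilon(t+h)-u^\varepsilon(t)\|_{L^1}\le Ch$ to the limit. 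This is cleaner than invoking lower semicontinuity of $\|u^\varepsilon_t(t)\|_{L^1}$ directly.
\end{enumerate}
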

For the proof of this lemma, we refer to \cite[Theorem 3]{Kruzkov1970} and \cite[Eq. (11), (14)]{Bardos1979}. 
\begin{remark}\label{rem2}
Taking the limit $\varepsilon=0$, we obtain that the entropy solution of \eqref{periodic} also satisfies the properties listed in Lemmas~\ref{lemcontra} and~\ref{lemmaxpr}. Further, by taking $f\equiv0$, we directly get that the heat semigroup $\fe^{\varepsilon t\Delta}$ also satisfies the properties listed in Lemmas~\ref{lemcontra} and~\ref{lemmaxpr}. Moreover, from the lemma above, we directly obtain
$$
\|\Delta u^\varepsilon\|_{L^\infty L^1}\lesssim\varepsilon^{-1}\|u_0\|_{W^{1,1}}
$$
by the a priori assumption $\|u_0\|_{W^{2,1}}\lesssim\varepsilon^{-1}$.
\end{remark}

\begin{proposition}\label{proputxn}
For the viscous conservation laws~\eqref{vis}, if  $f$ satisfies Assumption~\ref{localsmooth}, and the initial data $u_0$ satisfies
\begin{equation}\label{assume3}
\|u_0\|_{W^{k+1,1}}+\|u_0\|_{W^{k,\infty}}\lesssim_k\varepsilon^{-k}
\end{equation}
for all $k\in\mathbb{N}$, then for any $g\in\mathcal{C}^\infty$ and $k,l\in\mathbb{N}$ with $k+l>0$, we have for all $p\in[1,\infty]$ that
\begin{equation}\label{utxnbd}
\|\partial_t^lg(u^\varepsilon)\|_{L^\infty(0,T;W^{k,p})}\lesssim_{k,l,\delta}\varepsilon^{-k-l+\frac1p-\delta},
\end{equation}
where $\delta>0$ can be taken arbitrarily small.
\end{proposition}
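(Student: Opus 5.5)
The plan is to reduce everything to estimates for $u^\varepsilon$ itself and then transfer them to $g(u^\varepsilon)$ and to time derivatives. The reference scaling is the parabolic one: each spatial derivative costs $\varepsilon^{-1}$, each time derivative costs $\varepsilon^{-1}$ (through $\partial_t=\varepsilon\Delta-\nabla\cdot f$, with $\varepsilon\Delta\sim\varepsilon^{-1}$), and passing from $L^1$ to $L^p$ gains $\varepsilon^{1/p}$ relative to $L^\infty$.

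\textbf{Step 1: the endpoints $p=1$ and $p=\infty$ for $l=0$.} The $L^1$-bound $\|u^\varepsilon\|_{W^{1,1}}\lesssim1$ is \eqref{uxl1}, and $\|u^\varepsilon\|_{L^\infty}\lesssim 1$ is \eqref{maxpr}. For $L^\infty_tW^{k,\infty}$ I would argue by induction on $k$. Differentiating \eqref{vis} $k$ times gives
$$
\partial_t D^ku^\varepsilon-\varepsilon\Delta D^ku^\varepsilon=-\nabla\cdot D^k f(u^\varepsilon).
$$
By Fa\`a di Bruno and the inductive hypothesis, the right-hand side equals $-f'(u^\varepsilon)\nabla D^ku^\varepsilon$ plus lower-order terms of size $\varepsilon^{-k-1+\cdots}$. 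Because \eqref{rangepr} keeps $u^\varepsilon$ in $[u_-,u_+]$, Assumption~\ref{localsmooth} makes every $f^{(j)}(u^\varepsilon)$ bounded.

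I would then apply Lemma~\ref{lemheat} with $s$ chosen so that $L^pW^{s,q}$ with $p,q$ large embeds into $L^\infty W^{k,\infty}$ up to a loss of $\varepsilon^{-\delta}$. For the interpolation steps I would use Gagliardo--Nirenberg between $\|u\|_{L^\infty}\lesssim1$ and high norms, and Lemma~\ref{lem:kp} for the products. The initial data term is controlled by \eqref{assume3}.

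The key mechanism is to write the equation as $\partial_t w-\varepsilon\Delta w=\nabla\cdot F$. Then the maximal regularity gain $\varepsilon\|w\|_{W^{2+s}}$ against $\|F\|_{W^{s+1}}$ closes with exactly one factor $\varepsilon^{-1}$ per derivative. This works because $\|F\|_{W^{s+1,q}}\lesssim\|u^\varepsilon\|_{W^{s+1,q}}+\text{(lower order)}$, which by interpolation is at most $\eta\,\varepsilon\|u^\varepsilon\|_{W^{s+2,q}}+C\eta^{-1}\varepsilon^{-1}\|u^\varepsilon\|_{W^{s,q}}$. An induction in $s$ then gives $\|u^\varepsilon\|_{L^pW^{s,q}}\lesssim\varepsilon^{-s}$. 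The $L^p$-in-time with $p<\infty$, together with the Sobolev embeddings $q<\infty\to\infty$, is what costs the arbitrary loss $\varepsilon^{-\delta}$.

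\textbf{Step 2: the endpoint $p=1$ for $k\ge1$.} Differentiating the equation, an $L^1$ energy argument as in Lemma~\ref{lemcontra} (or the $W^{1,1}$ bound for $\partial^\alpha u^\varepsilon$ using the $L^1$-contraction applied to derivatives) gives $\|D^ku^\varepsilon\|_{L^1}\lesssim\varepsilon^{-k+1}$. Alternatively, I would interpolate between $\|Du^\varepsilon\|_{L^1}\lesssim1$ and $\|D^{k}u^\varepsilon\|_{L^\infty}\lesssim\varepsilon^{-k-\delta}$ using $\|D^jv\|_{L^p}\le\|D^jv\|_{L^1}^{1/p}\|D^jv\|_{L^\infty}^{1-1/p}$. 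Once $\|D^ku^\varepsilon\|_{L^1}\lesssim\varepsilon^{-k+1-\delta}$ holds, H\"older interpolation yields $\|D^ku^\varepsilon\|_{L^p}\lesssim\varepsilon^{-k+1/p-\delta}$ for every $p$.

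\textbf{Step 3: composition and time derivatives.} For $g(u^\varepsilon)$, the chain rule writes $D^kg(u^\varepsilon)$ as a sum of products $g^{(j)}(u^\varepsilon)\prod_i D^{\beta_i}u^\varepsilon$ with $\sum\beta_i=k$. I would put one factor in $L^p$ and the rest in $L^\infty$; the exponents then add to $-k+1/p-\delta$. Time derivatives are handled by replacing $\partial_t u^\varepsilon$ with $\varepsilon\Delta u^\varepsilon-\nabla\cdot f(u^\varepsilon)$ iteratively. Each $\partial_t$ becomes at most two spatial derivatives carrying an $\varepsilon$, or one derivative of a smooth function of $u^\varepsilon$, so it costs $\varepsilon^{-1}$. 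Repeating this yields \eqref{utxnbd}.

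The main obstacle is the $L^1$-type bound for higher derivatives without losing an extra power of $\varepsilon$, since maximal regularity fails at $q=1$. I would first try the interpolation route of Step~2, which needs only the $W^{1,1}$ bound and the $L^\infty$ bounds. That route is where the $\delta$-loss is absorbed.
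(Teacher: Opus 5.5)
Your Step~2 is where the proposal breaks, and it is the step that must produce the gain $\varepsilon^{1/p}$ at derivative order $k\ge2$.

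Route (a) fails. The $W^{1,1}$ bound comes from applying the $L^1$-contraction to $u^\varepsilon(\cdot+h)$ and $u^\varepsilon$. There is no analogue for second differences, since these are not differences of two solutions. If you differentiate twice instead, e.g.\ for $d=1$,
\[
\partial_t u_{xx}+\bigl(f'(u)u_{xx}\bigr)_x-\varepsilon\,\partial_x^2u_{xx}=-\bigl(f''(u)u_x^2\bigr)_x ,
\]
the right-hand side is a genuine source, not a transport term. The argument of Lemma~\ref{lemcontra} extracts no dissipation from the viscous term in $L^1$. So all you get is $\frac{d}{dt}\|u_{xx}\|_{L^1}\lesssim\|u_x\|_{L^\infty}\|u_{xx}\|_{L^1}+\|u_x\|_{L^\infty}^2\|u_x\|_{L^1}$. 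Since $\|u_x\|_{L^\infty}\sim\varepsilon^{-1}$, this gives a Gronwall factor $\fe^{CT/\varepsilon}$, not the bound $\varepsilon^{-1}$. The forcing alone is of size $\varepsilon^{-2}$ per unit time near a viscous shock.

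Route (b) is circular. The inequality $\|D^jv\|_{L^p}\le\|D^jv\|_{L^1}^{1/p}\|D^jv\|_{L^\infty}^{1-1/p}$ works at a fixed order $j$. At $j=k$ it needs the bound on $\|D^ku^\varepsilon\|_{L^1}$ that you are trying to prove. Nor can any interpolation between $\|Du^\varepsilon\|_{L^1}$ and an $L^\infty$ bound on $D^Ku^\varepsilon$ land in $L^1$ at order $k\ge2$. By scaling it reaches order $\frac1r+(1-\frac1r)K$ in $L^r$, so the $L^1$ endpoint is pinned at order one. Hence the bound $\|D^ku^\varepsilon\|_{L^p}\lesssim\varepsilon^{-k+1/p-\delta}$ for $k\ge2$, $p<\infty$, which Step~3 needs, is never established.

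The paper gets this as follows. Use same-order H\"older interpolation only at $k=1$, between \eqref{uxl1} and the $W^{1,\infty}$ bound \eqref{w1infk}. Then climb in $k$ one derivative at a time, by maximal $L^p$--$L^q$ regularity at a \emph{fixed} $q>1$:
\[
\|u^\varepsilon\|_{L^pW^{m+1,q}}\lesssim\varepsilon^{-1}\|f(u^\varepsilon)\|_{L^pW^{m,q}}+\varepsilon^{-1/p}\|u_0\|_{W^{m+1-2/p,q}} .
\]
The data term is handled by \eqref{iniest}. Each step costs exactly $\varepsilon^{-1}$, so the gain $\varepsilon^{1/q}$ from the base case propagates. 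The endpoint $q=1$ is then obtained as in \eqref{q=1}, from $L^{1+\delta/4}(\mathbb T^d)\subset L^1(\mathbb T^d)$ at the price of $\varepsilon^{-\delta}$. So the argument goes from $q>1$ down to $q=1$, not the other way round.

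Your Step~1 machinery already does this if you run it at fixed $q$, starting from the $k=1$ bound with the gain rather than from $\varepsilon^{-s}$. Route (b) can also be repaired by putting $D^Ku^\varepsilon$ with $K\gg k$ at the $L^\infty$ end. Gagliardo--Nirenberg with $\|Du^\varepsilon\|_{L^1}\lesssim1$ then gives $\|D^ku^\varepsilon\|_{L^r}\lesssim\varepsilon^{-k+1/r-\delta}$ for $r=\frac{K-1}{K-k}>1$, followed by the same embedding.

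A smaller point: in Step~1, $L^pW^{s,q}$ by itself does not embed into $L^\infty$ in time. You need the $\partial_t$ control supplied by maximal regularity. The paper upgrades to $L^\infty$ in time only at the end, by a Sobolev embedding in time that uses the time-derivative bounds.
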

\begin{proof}
By \eqref{assume3}, we have for all $1\le s\in\mathbb{R}$ and $p\in(1,\infty)$ that
\begin{equation}\label{iniest}
\begin{aligned}
\|u_0\|_{W^{s,p}}&\le\|u_0\|^\frac1p_{W^{s,1}}\|u_0\|^{1-\frac1p}_{W^{s,\infty}}\\
&\le\|u_0\|^{\frac1p(m+1-s)}_{W^{m,1}}\|u_0\|^{\frac1p(s-m)}_{W^{m+1,1}}\|u_0\|^{(1-\frac1p)(m+1-s)}_{W^{m,\infty}}\|u_0\|^{(1-\frac1p)(s-m)}_{W^{m+1,\infty}}\lesssim\varepsilon^{-s+\frac1p},
\end{aligned}
\end{equation}
where $m=\lfloor s\rfloor$.

We first prove by induction with respect to $k$ that for any $p>1$ and $g\in\mathcal{C}^\infty$, we have
\begin{equation}\label{lpk}
\|g(u^\varepsilon)\|_{L^p W^{k,p}}\lesssim\varepsilon^{-k}.
\end{equation}
To prove \eqref{lpk}, we first assume without loss of generality that $p>2$ since $L^p\subset L^{p'}$ for all $p>p'$. By \eqref{iniest} and Lemma~\ref{lemheat}, we derive that
\begin{align*}
\|u^\varepsilon\|_{L^p W^{2,p}}&\lesssim\varepsilon^{-1}\|f(u^\varepsilon)\|_{L^p W^{1,p}}+\varepsilon^{-\frac1p}\|u_0\|_{W^{2-\frac2p,p}}\\
&\lesssim\varepsilon^{-1}\sup\limits_{u_-\le u^\varepsilon\le u_+}|f'(u^\varepsilon)|\|u^\varepsilon\|_{L^pW^{1,p}}+\sup\limits_{u_-\le u^\varepsilon\le u_+}|f(u^\varepsilon)|+\varepsilon^{-2+\frac1p}\\
&\lesssim\varepsilon^{-1}\|u^\varepsilon\|^\frac12_{L^pL^p}\|u^\varepsilon\|^\frac12_{L^pW^{2,p}}+\varepsilon^{-2+\frac1p}\lesssim\varepsilon^{-1}\|u^\varepsilon\|^\frac12_{L^pW^{2,p}}+\varepsilon^{-2+\frac1p},
\end{align*}
which implies
$$
\|u^\varepsilon\|_{L^pW^{2,p}}\lesssim\varepsilon^{-2}
$$
and
$$
\|g(u^\varepsilon)\|_{L^pW^{1,p}}\lesssim\sup\limits_{u_-\le u^\varepsilon\le u_+}|g(u^\varepsilon)|+\sup\limits_{u_-\le u^\varepsilon\le u_+}|g'(u^\varepsilon)|\|u^\varepsilon\|_{L^pW^{1,p}}\lesssim\varepsilon^{-1}.
$$
We then assume \eqref{lpk} is valid for all $1\le k\le m$. Then for $k=m+1$, by \eqref{iniest}, {Lemma~\ref{lem:kp}}, and Lemma~\ref{lemheat}, we deduce that
\begin{align*}
\|g&(u^\varepsilon)\|_{L^pW^{m+1,p}}\lesssim\|g'(u^\varepsilon)\nabla u^\varepsilon\|_{L^pW^{m,p}}+\sup\limits_{u_-\le u^\varepsilon\le u_+}|g(u^\varepsilon)|\\
&\lesssim\|g'(u^\varepsilon)\|_{L^\infty L^\infty}\|u^\varepsilon\|_{L^pW^{m+1,p}}+\|g'(u^\varepsilon)\|_{L^{2p}W^{m,2p}}\|u^\varepsilon\|_{L^{2p}W^{1,2p}}+1\\
&\lesssim\varepsilon^{-1}\|f(u^\varepsilon)\|_{L^p W^{m,p}}+\varepsilon^{-\frac1p}\|u_0\|_{W^{m+1-\frac2p,p}}+\varepsilon^{-m-1}\lesssim\varepsilon^{-m-1}.
\end{align*}
We then conclude \eqref{lpk}. Thus by Young's inequality, we obtain for any $\delta>0$ sufficiently small that
\begin{equation}\label{w1infk}
\|g(u^\varepsilon)\|_{L^pW^{k,\infty}}\lesssim\|g(u^\varepsilon)\|^{1-\delta/2}_{L^pW^{k,p}}\|g(u^\varepsilon)\|^{\delta/2}_{L^pW^{k+1,p}}\lesssim\varepsilon^{-k-\delta/2}
\end{equation}
by taking $p>\frac {2d}\delta$.

In the following spatial and temporal inductions, the loss parameters in the induction hypotheses are chosen sufficiently small so that the total loss is at most $\delta/2$. We now prove
\begin{equation}\label{uxnbd}
\|g(u^\varepsilon)\|_{L^p([0,T];W^{k,q})}\lesssim_{k,l,\delta}\varepsilon^{-k+\frac1q-\frac\delta2}   
\end{equation}
for any $p\ge q>1$ and $\delta>0$ by induction with respect to $k$. Note that the case $k=1$ can be obtained directly by interpolating between \eqref{uxl1} and \eqref{w1infk}. Assume \eqref{uxnbd} is valid for $l=0$, and all $p\ge q>1$, $1\le k\le m$. Then for $k=m+1$, again by \eqref{iniest}, \eqref{lpk}, Lemma~\ref{lem:kp}, and Lemma~\ref{lemheat}, we deduce that
\begin{align*}
\|g&(u^\varepsilon)\|_{L^pW^{m+1,q}}\lesssim\|g'(u^\varepsilon)\nabla u^\varepsilon\|_{L^pW^{m,q}}+\sup\limits_{u_-\le u^\varepsilon\le u_+}|g(u^\varepsilon)|\\
&\lesssim\sup\limits_{u_-\le u^\varepsilon\le u_+}|g'(u^\varepsilon)|\|u^\varepsilon\|_{L^pW^{m+1,q}}+\|g'(u^\varepsilon)\|_{L^{p_1}W^{m,q_1}}\|u^\varepsilon\|_{L^{p_2}W^{1,p_2}}+1\\
&\lesssim\varepsilon^{-1}\|f(u^\varepsilon)\|_{L^pW^{m,q}}+\varepsilon^{-\frac1p}\|u_0\|_{W^{m+1-\frac2p,q}}+\varepsilon^{-m+\frac1q-\frac\delta2-1}\lesssim\varepsilon^{-m-1+\frac1q-\frac\delta2},
\end{align*}
where
\begin{equation}\label{p1q1}
p_1=\frac{2p}{2-p\delta_1},\qquad q_1=\frac{2q}{2-q\delta_1},\qquad p_2=\frac2{\delta_1}
\end{equation}
with $\delta_1=\min\{\frac1p,\frac\delta2\}$ and thus $\frac1{q_1}>\frac1q-\frac\delta2$. Moreover, since
\begin{equation}\label{q=1}
\|g(u^\varepsilon)\|_{L^pW^{k,1}}\lesssim\|g(u^\varepsilon)\|_{L^pW^{k,1+\frac\delta4}}
\lesssim\varepsilon^{-k+\frac1{1+\frac\delta4}-\frac\delta4}\lesssim\varepsilon^{-k+1-\frac\delta2}, 
\end{equation}
We thus conclude \eqref{uxnbd}.

We now step to the proof for
\begin{equation}\label{utxnpq}
\|\partial_t^lg(u^\varepsilon)\|_{L^p([0,T];W^{k,q})}\lesssim_{k,l,\delta}\varepsilon^{-k-l+\frac1q-\frac\delta2}
\end{equation}
by induction for $l$. We assume \eqref{utxnpq} is valid for all $0\le l\le m$. Then for $l=m+1$ and any $p\ge q>1$, we obtain by \eqref{vis}, \eqref{iniest}, and {Lemma~\ref{lem:kp}} that
\begin{align*}
\|&\partial_t^{m+1}g(u^\varepsilon)\|_{L^p W^{k,q}}\lesssim\|\partial_t^m\big(g'(u^\varepsilon)u^\varepsilon_t\big)\|_{L^pW^{k,q}}\\
&\lesssim\sup\limits_{\substack{0\le j\le m\\k\ge1}}\|\partial_t^jg'(u^\varepsilon)\|_{L^{p_1}W^{k,q_1}}\|\partial_t^{m+1-j}u^\varepsilon\|_{L^{p_2}L^{p_2}}+\sup\limits_{0\le j\le m}\|\partial_t^jg'(u^\varepsilon)\|_{L^{p_2}L^{p_2}}\|\partial_t^{m+1-j}u^\varepsilon\|_{L^{p_1}W^{k,q_1}}\\
&\lesssim\sup\limits_{\substack{0\le j\le m\\k\ge1}}\Big(\varepsilon^{-k-j+\frac1q-\frac\delta2}\|\partial_t^{m-j}f(u^\varepsilon)\|_{L^{p_2} W^{1,p_2}}+\varepsilon^{1-k-j+\frac1q-\frac\delta2}\|\partial_t^{m-j}u^\varepsilon\|_{L^{p_2} W^{2,p_2}}\big)\\
&\quad+\sup\limits_{0\le j\le m}\Big(\varepsilon^{-j+\frac1{p_2}-\frac{\delta'}2}\|\partial_t^{m-j}f(u^\varepsilon)\|_{L^{p_1}W^{k+1,q_1}}+\varepsilon^{1-j+\frac1{p_2}-\frac{\delta'}2}\|\partial_t^{m-j}u^\varepsilon\|_{L^{p_1}W^{k+2,q_1}}\big)\\
&\lesssim\sup\limits_{0\le j\le m}\varepsilon^{-k-j-1+\frac1q-\frac\delta2}\varepsilon^{-m+j+\frac1{p_2}-\frac{\delta'}2}+\sup\limits_{0\le j\le m}\varepsilon^{-j+\frac1{p_2}-\frac{\delta'}2}\varepsilon^{-k-m+j-1+\frac1q-\frac\delta2}=\varepsilon^{-k-m-1+\frac1q-\frac\delta2},
\end{align*}
where $p_1,~q_1,~p_2$ are defined in \eqref{p1q1}, and $\delta'=\frac2{p_2}$. Since the borderline cases $q=1,\infty$ can also be obtained similarly as given in \eqref{q=1} and \eqref{w1infk}, respectively, we arrive at \eqref{utxnpq} for general $k,l\in\mathbb{N}$ with $k+l>0$. Finally, by taking $p>\frac2\delta$, we obtain by Sobolev embedding that
\begin{align*}
\|\partial_t^lg(u^\varepsilon)\|_{L^\infty([0,T];W^{k,q})}&\lesssim\|\partial_t^lg(u^\varepsilon)\|_{L^p([0,T];W^{k,q})}+\|\partial_t^lg(u^\varepsilon)\|^{1-\frac\delta2}_{L^p([0,T];W^{k,q})} \|\partial_t^{l+1}g(u^\varepsilon)\|^{\frac\delta2}_{L^p([0,T];W^{k,q})}\\
&\lesssim_{k,l,\delta}\varepsilon^{-k-l+\frac1q-\delta},
\end{align*}
this concludes the proof.
\end{proof}

\subsection{The convergence order of a splitting flow}

In this part, we study the effect of the operators $\fe^{\varepsilon^2\Delta}$ at every time step. We recall that we added these operators in order to fit the requirements of Proposition~\ref{proputxn} and avoid the residual of the optimizer losing derivatives continuously. However, these operators also affect the exact solution we simulate. We show that, by taking $\tau\sim\varepsilon$, such effect is limited, which is parallel to the standard result \eqref{visconv}. Specifically, we will prove the following proposition:

\begin{proposition}\label{prop:splitting-flow}
Let $t_k=k\tau$ with $\tau=c\varepsilon$. Set $v^\varepsilon$ to be a piecewise continuous (in time) function such that, $v^\varepsilon\big|_{(t_k,t_{k+1}]}$ is the exact solution to the viscous equation \eqref{vis}, where the initial data is defined by 
$$
v^\varepsilon(0)=u_0,\qquad
(v^\varepsilon)^+(t_k)=\lim\limits_{t\to t_k^+}v^\varepsilon(t)=\fe^{\varepsilon^2\Delta}v^\varepsilon(t_k),\quad k\ge0.
$$
Here $v^\varepsilon(t_k)$ denotes the left trace for $k\ge1$, while $v^\varepsilon(0)=u_0$ is the data before the first heat step. Thus $(v^\varepsilon)^+(0)=\fe^{\varepsilon^2\Delta}u_0$. Let $u$ be the entropy solution with initial data $u_0$. Then, we have the following error estimate: for any $T>0$, $u_0\in L^\infty\cap\bv$, and $f\in\mathcal{C}^\infty$, there exists $\varepsilon_0>0$, such that for any  $t\in(0,T]$, $\varepsilon\in(0,\varepsilon_0]$, we have
$$
\|u(t)-v^\varepsilon(t)\|_{L^1}\le C\varepsilon^\frac12,
$$
where $C$ depends on $f$, $T$, $c$ and $u_0$, but is independent of $t$ and $\varepsilon$.
\end{proposition}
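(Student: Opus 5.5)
The plan is to run Kuznetsov's doubling-of-variables argument directly on the splitting flow $v^\varepsilon$. Each heat step should be treated as an additional, time-concentrated viscosity, not as a perturbation to be summed up.

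\textbf{Why naive telescoping fails.} A single step gives $\|\fe^{\varepsilon^2\Delta}w-w\|_{L^1}\lesssim\varepsilon\,\tv(w)$. Over $T/\tau=T/(c\varepsilon)$ slabs this accumulates to $O(1)$, which is not small. The relevant observation is that $\fe^{\varepsilon^2\Delta}$ applied once per interval of length $\tau=c\varepsilon$ acts like an extra viscosity of size $\varepsilon^2/\tau=\varepsilon/c$. So $v^\varepsilon$ should behave like a viscous approximation with effective viscosity $\varepsilon(1+c^{-1})$, whose Kuznetsov rate is still $O(\varepsilon^{1/2})$.

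\textbf{Step 1: uniform a priori bounds.} By Lemma~\ref{lemmaxpr} and Remark~\ref{rem2}, both the viscous flow and the heat semigroup preserve $[u_-,u_+]$ and do not increase $\|\cdot\|_{L^\infty}$ or $\tv$. Hence $v^\varepsilon$ satisfies $\|v^\varepsilon(t)\|_{L^\infty}\le\|u_0\|_{L^\infty}$ and $\tv(v^\varepsilon(t))\le\tv(u_0)$ uniformly.

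For $L^1$-continuity in time:
\begin{itemize}
\item inside a slab, \eqref{utl1} gives $\|v^\varepsilon(t)-v^\varepsilon(s)\|_{L^1}\lesssim(1+\varepsilon\varepsilon^{-1})|t-s|$ after the first heat step (using a BV-based bound $\|\varepsilon\Delta v\|_{L^1}\lesssim\tv$ after smoothing), or more simply a BV-based estimate of order $|t-s|+\sqrt{\varepsilon|t-s|}$;
\item each jump has size $\lesssim\varepsilon\,\tv(u_0)$.
\end{itemize}
Summing, $\|v^\varepsilon(t)-v^\varepsilon(s)\|_{L^1}\lesssim |t-s|+\varepsilon$, which is the $L^1$ time modulus Kuznetsov's lemma needs.

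\textbf{Step 2: a global entropy inequality for $v^\varepsilon$.} On each slab, for every $k\in\mathbb R$ and $0\le\varphi\in C_c^\infty$, the viscous equation gives in the distributional sense
\[
\partial_t|v^\varepsilon-k|+\nabla\cdot\big(\sgn(v^\varepsilon-k)(f(v^\varepsilon)-f(k))\big)\le\varepsilon\Delta|v^\varepsilon-k|.
\]
This follows by the $\gamma_\delta$-regularization used in Lemma~\ref{lemcontra}. Across a heat step, the heat flow $w(s)=\fe^{s\Delta}w_0$ satisfies $\partial_s|w-k|\le\Delta|w-k|$. Integrating over $s\in[0,\varepsilon^2]$ gives
\[
\int|(v^\varepsilon)^+(t_j)-k|\varphi\,dx\le\int|v^\varepsilon(t_j)-k|\varphi\,dx+\int_0^{\varepsilon^2}\!\!\int|w(s)-k|\,\Delta\varphi\,dx\,ds .
\]
Adding all slabs and jumps yields a Kuznetsov-type entropy functional for $v^\varepsilon$ with two defects: the usual $\varepsilon\iint|v^\varepsilon-k|\Delta\varphi$, and a sum over $j$ of the heat-jump terms.

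\textbf{Step 3: doubling variables and the main obstacle.} I then double variables against the exact entropy solution $u$, using the Kuznetsov test function $\omega_\rho(x-y)\,\omega_{\rho_0}(t-s)$. The viscous defect is bounded via $\int|\nabla_x v|\,|\nabla\omega_\rho|\lesssim\varepsilon\,\tv(u_0)/\rho$. Each heat-jump defect is moved onto the test function and bounded by $\varepsilon^2\,\tv(u_0)\,\|\nabla\omega_\rho\|_{L^1}\lesssim\varepsilon^2\tv/\rho$ per step. Summing over $T/(c\varepsilon)$ steps gives $(\varepsilon/c)\,\tv/\rho$.

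The main obstacle is that these jump defects are located at discrete times $t_j$, not spread in time. The doubled time variable only tolerates this at resolution $\rho_0$. I would take $\rho_0\gtrsim\tau\sim\varepsilon$ so that each mollified time window contains $O(\rho_0/\tau)$ jumps, and control the remaining mismatch with the time modulus from Step 1, which costs $O(\rho_0+\varepsilon)$.

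Kuznetsov's lemma then gives
\[
\|u(t)-v^\varepsilon(t)\|_{L^1}\lesssim\rho+\rho_0+\varepsilon+\frac{\varepsilon(1+c^{-1})}{\rho}.
\]
Choosing $\rho=\varepsilon^{1/2}$ and $\rho_0=\varepsilon$, with $\varepsilon\le\varepsilon_0$ so that $\varepsilon\le\varepsilon^{1/2}$, gives $C\varepsilon^{1/2}$, with $C$ depending on $c$, $T$, $f$ and $u_0$.
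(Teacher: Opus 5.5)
Your proposal is correct and is essentially the paper's argument. Both use Kru\v{z}kov--Kuznetsov doubling of variables against the entropy solution, giving a viscous defect of order $\varepsilon/\delta$. Each heat jump is controlled by moving $\fe^{\varepsilon^2\Delta}-I$ onto the spatial mollifier at cost $\varepsilon^2\tv(u_0)/\delta$; summed over $T/(c\varepsilon)$ slabs this gives $\varepsilon/(c\delta)$, and one takes $\delta=\sqrt{\varepsilon}$.

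The differences are cosmetic. The paper uses the Jensen bound $|u-\fe^{\varepsilon^2\Delta}v|\le\fe^{\varepsilon^2\Delta}|u-v|$ instead of Kato's inequality for the heat flow. It also takes the time-mollification width smaller than the last slab rather than $\rho_0\sim\varepsilon$. Each jump term is weighted by $\int\eta_\theta(r-s_n)\,dr\le1$ whatever the time scale, so the ``main obstacle'' you flag does not actually arise.
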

\begin{proof}
For simplicity of notation in this proof, we write $v=v^\varepsilon$. We will revisit the doubling variable argument by Kru\v{z}kov \cite{Kruzkov1970}. Note that if we define the nonlinear operator $S^\varepsilon_\tau(w)$ as the exact solution of \eqref{vis} at time $t=\tau$ with initial data $w$, then $v(t_n)$ is the splitting flow:
$$
v(t_n)=S_\tau^\varepsilon\big(\fe^{\varepsilon^2\Delta}v(t_{n-1})\big)=\big(S_\tau^\varepsilon\circ\fe^{\varepsilon^2\Delta}\big)^nu_0.
$$
We will follow the proof for operator splitting given in \cite[Chap. V]{Holdenetal2010}.

Let $\eta\in\mathcal{C}_c^\infty(\mathbb{R})$ and $\zeta\in\mathcal{C}_c^\infty(\mathbb{R}^d)$ be non-negative, even functions compactly supported in $[-2,2]$ and $[-2,2]^d$, respectively, such that
$$
\int_{-2}^2 \eta(s)\,ds = \int_{[-2,2]^d} \zeta(x)\,dx = 1.
$$
For any $\theta,\delta>0$, we denote the rescaled profiles by $\eta_\theta(s)=\theta^{-1}\eta(s/\theta)$ and $\zeta_\delta(x)=\delta^{-d}\zeta(x/\delta)$. For $2\delta<\pi$, we view the spatial kernel as a smooth
function on $\mathbb T^d$, using its support inside $(-\pi,\pi)^d$; $x-y$ is understood modulo the period. By a simple change of variables, it straightforwardly follows that
\begin{equation}\label{test+1}
\int_{-2\theta}^{2\theta}|s|\eta_\theta(s)\,ds \lesssim\theta, \qquad 
\int_{[-2\delta,2\delta]^d}|x|\zeta_\delta(x)\,dx \lesssim\delta
\end{equation}
and
\begin{equation}\label{test-1}
\|\eta_\theta'\|_{L^1} \lesssim\theta^{-1}, \qquad
\|\nabla\zeta_\delta\|_{L^1} \lesssim\delta^{-1}.
\end{equation}
For fixed $(s,y)\in(0,T)\times\mathbb{T}^d$, we have Kru\v{z}kov's entropy condition:
\begin{equation}\label{entu}
\partial_r|u(r,x)-v(s,y)|+\nabla_x\cdot\Big(\sgn\big(u(r,x)-v(s,y)\big)\big(f(u)-f(v)\big)\Big)\le0.
\end{equation}
Inequality \eqref{entu} holds in the sense of distributions in $(r,x)$, with the fixed value $v(s,y)$ serving as the entropy parameter; equivalently, it is tested against nonnegative smooth test functions. Similarly, for the viscous solution, we also have the estimate \cite[Eq. (4.23)]{Kruzkov1970}
\begin{equation}\label{entv}
\partial_s|u(r,x)-v(s,y)|+\nabla_y\cdot\Big(\sgn\big(u(r,x)-v(s,y)\big)\big(f(u)-f(v)\big)\Big)\le\varepsilon\Delta_y|u(r,x)-v(s,y)|
\end{equation}
since $|v-u|=|u-v|$ and $\sgn(u-v)=-\sgn(v-u)$. Inequality \eqref{entv} holds in the distributional sense on each open time slab. We integrate by parts in time slab by slab, retaining the jump terms in \eqref{intbpt}. By applying \eqref{entu} on $(0,t)$ and \eqref{entv} on each time slab $(s_{n-1},s_n)$, we first introduce auxiliary temporal cutoffs compactly supported in the corresponding open intervals and test with $\eta_\theta(r-s)\zeta_\delta(x-y)$. Letting the auxiliary cutoffs tend to one and summing over all time slabs, we obtain
\begin{equation}\label{splitst}
\begin{aligned}
\iiiint_{[0,t]^2\times\mathbb{T}^{2d}}&\Big((\partial_r+\partial_s)\big|u(r,x)-v(s,y)\big|\Big)\eta_\theta(r-s)\zeta_\delta(x-y)\\
&+\Big[(\nabla_x+\nabla_y)\cdot\Big(\sgn\big(u(r,x)-v(s,y)\big)\big(f(u)-f(v)\big)\Big)\Big]\eta_\theta(r-s)\zeta_\delta(x-y)\\
&-\varepsilon\Big(\Delta_y\big|u(r,x)-v(s,y)\big|\Big)\eta_\theta(r-s)\zeta_\delta(x-y)\,dxdydrds\le0.
\end{aligned}
\end{equation} 
Integration by parts in space, noting that $\nabla_x\zeta_\delta(x-y)=-\nabla_y\zeta_\delta(x-y)$, we directly get
$$
\iiiint_{[0,t]^2\times\mathbb{T}^{2d}}\Big[(\nabla_x+\nabla_y)\cdot\Big(\sgn\big(u(r,x)-v(s,y)\big)\big(f(u)-f(v)\big)\Big)\Big]\eta_\theta(r-s)\zeta_\delta(x-y)\,dxdydrds=0.
$$
Moreover, by \eqref{uxl1} and Remark~\ref{rem2}, and using again integration by parts, we deduce that
\begin{align*}
\Big|\iiiint_{[0,t]^2\times\mathbb{T}^{2d}}&\Big(\Delta_y\big|u(r,x)-v(s,y)\big|\Big)\eta_\theta(r-s)\zeta_\delta(x-y)\,dxdydrds\Big|\\
&\le\iiiint_{[0,t]^2\times\mathbb{T}^{2d}}|\sgn(u-v)|\big|\nabla_yv(s,y)\big|\eta_\theta(r-s)\big|\nabla_y\zeta_\delta(x-y)\big|\,dxdydrds\\
&\le\iiint_{[0,t]^2\times\mathbb{T}^d}\eta_\theta(r-s)\big|\nabla v(s)\big|*_x\big|\nabla\zeta_\delta\big|\,dxdrds
\end{align*}
Note that for any fixed $s\in[0,T]$ we have
$$
\int_0^t\eta_\theta(r-s)\,dr\le1,
$$
we thus obtain by Young's inequality for convolutions, \eqref{test-1} and \eqref{uxl1} that
\begin{align*}
\Big|\iiiint_{[0,t]^2\times\mathbb{T}^{2d}}&\Big(\Delta_y\big|u(r,x)-v(s,y)\big|\Big)\eta_\theta(r-s)\zeta_\delta(x-y)\,dxdydrds\Big|\\
&\le\int_0^t\tv_y(v(s))\tv(\zeta_\delta)\,ds\lesssim\delta^{-1}.
\end{align*}
This implies
\begin{equation}\label{splitres}
\iiiint_{[0,t]^2\times\mathbb{T}^{2d}}\big((\partial_r+\partial_s)|u(r,x)-v(s,y)|\big)\eta_\theta(r-s)\zeta_\delta(x-y)\,dxdydrds\lesssim\frac\varepsilon\delta.
\end{equation}
Noting that $\partial_r\eta_\theta(r-s)=-\partial_s\eta_\theta(r-s)$, using integration by parts, we derive that
\begin{equation}\label{intbpt}
\begin{aligned}
&\iiiint_{[0,t]^2\times\mathbb{T}^{2d}}\Big((\partial_r+\partial_s)\big|u(r,x)-v(s,y)\big|\Big)\eta_\theta(r-s)\zeta_\delta(x-y)\,dxdydrds\\
&=\iiint_{[0,t]\times\mathbb{T}^{2d}}\Big(\big|u(t,x)-v(s,y)\big|\eta_\theta(t-s)-\big|u(0,x)-v(s,y)\big|\eta_\theta(-s)\Big)\zeta_\delta(x-y)\,dxdyds\\
&\quad+\sum\limits_{n=1}^m\iiint_{[0,t]\times\mathbb{T}^{2d}}\big|u(r,x)-v(s_n,y)\big|\eta_\theta(r-s_n)\zeta_\delta(x-y)\,dxdydr\\
&\quad-\sum\limits_{n=0}^{m-1}\iiint_{[0,t]\times\mathbb{T}^{2d}}\big|u(r,x)-\fe^{\varepsilon^2\Delta_y}v(s_n,y)\big|\eta_\theta(r-s_n)\zeta_\delta(x-y)\,dxdydr
\end{aligned}
\end{equation}
where $m=\lceil\frac t\tau\rceil,~s_n=\min\{n\tau,t\}$.

By denoting $G$ the heat kernel satisfying $\fe^{\varepsilon^2\Delta}v(s_n)=G*_yv(s_n)$, we have
\begin{align*}
|u(r,x)&-\fe^{\varepsilon^2\Delta_y}v(s_n,y)\big|=\Big|\int_{\mathbb{T}^d}G(y-z)\big(u(r,x)-v(s_n,z)\big)\,dz\Big|\\
&\le\int_{\mathbb{T}^d}G(y-z)\big|u(r,x)-v(s_n,z)\big|\,dz=\fe^{\varepsilon^2\Delta_y}\big|u(r,x)-v(s_n,y)\big|,
\end{align*}
which implies for any $1\le n\le m-1$ that
\begin{equation}\label{splitest}
\begin{aligned}
&\iiint_{[0,t]\times\mathbb{T}^{2d}}\Big(\big|u(r,x)-\fe^{\varepsilon^2\Delta_y}v(s_n,y)\big|-\big|u(r,x)-v(s_n,y)\big|\Big)\eta_\theta(r-s_n)\zeta_\delta(x-y)\,dxdydr\\
&\le\iiint_{[0,t]\times\mathbb{T}^{2d}}(\fe^{\varepsilon^2\Delta_y}-I)\big|u(r,x)-v(s_n,y)\big|\eta_\theta(r-s_n)\zeta_\delta(x-y)\,dxdydr\\
&\le\varepsilon^2\sup_{p\in[0,\varepsilon^2]}\iiint_{[0,t]\times\mathbb{T}^{2d}}\left|\fe^{p\Delta_y}\nabla_y\big|u(r,x)-v(s_n,y)\big|\cdot\nabla_y\zeta_\delta(x-y)\right|\eta_\theta(r-s_n)\,dxdydr\\
&\le\varepsilon^2\iiint_{[0,t]\times\mathbb{T}^{2d}}\big|\nabla_yv(s_n,y)\big|\cdot\big|\nabla_x\zeta_\delta(x-y)\big|\eta_\theta(r-s_n)\,dxdydr\lesssim\frac{\varepsilon^2}\delta.
\end{aligned}
\end{equation}
Here $v(s_n)=v(s_n^-)$ and
$\fe^{\varepsilon^2\Delta}v(s_n)=v(s_n^+)$, so \eqref{splitest}
estimates the heat-step jump.

For the endpoint estimates, the BV translation bound and heat smoothing give
$$
\|u_0(\cdot+z)-u_0\|_{L^1}\le |z|\tv(u_0),\qquad \|(I-\fe^{\varepsilon^2\Delta})u_0\|_{L^1} \le C\varepsilon\tv(u_0).
$$
For $a_n=\fe^{\varepsilon^2\Delta}v(t_n)$, heat smoothing gives $\|\Delta a_n\|_{L^1}\le C\varepsilon^{-1}\tv(v(t_n))$. Since both evolution operators decrease total variation, \eqref{utl1} implies
$$
\sup_{t_n<s<t_{n+1}}\|v_s(s)\|_{L^1} \le\sup_{[u_-,u_+]}|f'|\tv(a_n) +\varepsilon\|\Delta a_n\|_{L^1} \le C\tv(u_0).
$$
Hence $\|v(s)-v(r)\|_{L^1}\le C|s-r|$ within each slab, with the corresponding one-sided endpoint values. The conservation law and the BV bound for $u$ also give $\|u(s)-u(r)\|_{L^1}\le\sup_{[u_-,u_+]}|f'|\tv(u_0)|s-r|$. These constants are independent of the slab index and $\varepsilon$.

Choose $\theta$ such that $0<2\theta<t-t_{m-1}\le\tau$. On the support of $\eta_\theta(s)$, since $0\le s\le2\theta<\tau$, the first-slab estimate with $v^+(0)=\fe^{\varepsilon^2\Delta}u_0$ yields
$$
\|v(s)-v^+(0)\|_{L^1} \le\int_0^s\|v_r(r)\|_{L^1}\,dr \le s\sup_{0<r<\tau}\|v_r(r)\|_{L^1}\le Cs.
$$

Using $u(0)=u_0$ and $v^+(0)=\fe^{\varepsilon^2\Delta}u_0$, we insert $u_0(y)$ and $v^+(0,y)$ and apply the triangle inequality. Together with these estimates, \eqref{test+1}, \eqref{test-1}, and $\theta\lesssim\tau\sim\varepsilon$, we obtain
\begin{align}\label{utv0}
&\quad\iiint_{[0,t]\times\mathbb{T}^{2d}}\big|u(0,x)-v(s,y)\big|\eta_\theta(-s)\zeta_\delta(x-y)\,dxdyds\notag\\
&\le\iiint_{[0,t]\times\mathbb{T}^{2d}}\Big(\big|u_0(x)-u_0(y)\big|+\big|(I-\fe^{\varepsilon^2\Delta_y})u_0(y)\big|\notag\\*
&\qquad\qquad+\big|v^+(0,y)-v(s,y)\big|\Big)\eta_\theta(s)\zeta_\delta(x-y)\,dxdyds\notag\\
&\le\frac12\iint_{\mathbb{T}^{2d}}\big|u_0(x)-u_0(x+z)\big|\zeta_\delta(z)\,dxdz+\frac12\iint_{\mathbb{T}^{2d}}\big|(I-\fe^{\varepsilon^2\Delta_y})u_0(y)\big|\zeta_\delta(x-y)\,dxdy\notag\\*
&\qquad\qquad+\iiint_{[0,t]\times\mathbb{T}^{2d}}\big|v^+(0,y)-v(s,y)\big|\eta_\theta(s)\zeta_\delta(x-y)\,dxdyds\notag\\
&\lesssim\tv(u_0)\int_{\mathbb T^d}|z|\zeta_\delta(z)\,dz+\varepsilon\tv(u_0)+\int_0^{2\theta}s\eta_\theta(s)\,ds\;\sup_{0<r<\tau}\|v_r(r)\|_{L^1}\notag\\
&\lesssim\delta\tv(u_0)+\varepsilon\tv(u_0)+\theta\tv(u_0)\lesssim\delta+\varepsilon.
\end{align}
Similarly, we also deduce that
\begin{equation}\label{u0vs}
\iiint_{[0,t]\times\mathbb{T}^{2d}}\big|u(r,x)-v^+(0,y)\big|\eta_\theta(r)\zeta_\delta(x-y)\,dxdydr\lesssim\delta+\varepsilon.
\end{equation}
Combining the estimates between \eqref{splitres} and \eqref{u0vs}, noting that $m-1\le\frac t\tau\le\frac T\tau$, we arrive at
\begin{equation}\label{uvT}
\begin{aligned}
\iiint_{[0,t]\times\mathbb{T}^{2d}}&\Big(\big|u(t,x)-v(s,y)\big|+\big|u(s,x)-v(t,y)\big|\Big)\eta_\theta(t-s)\zeta_\delta(x-y)\,dxdyds\\
&=\iiint_{[0,t]\times\mathbb{T}^{2d}}\big|u(t,x)-v(s,y)\big|\eta_\theta(t-s)\zeta_\delta(x-y)\,dxdyds\\
&\quad+\iiint_{[0,t]\times\mathbb{T}^{2d}}\big|u(s,x)-v(t,y)\big|\eta_\theta(s-t)\zeta_\delta(x-y)\,dxdyds\lesssim\delta+\varepsilon+\frac{\varepsilon}{\delta}.
\end{aligned}
\end{equation}
By the definition of $\eta$ and $\zeta$, we directly have
$$
\iiint_{[0,t]\times\mathbb{T}^{2d}}\big|u(t,y)-v(t,y)\big|\eta_\theta(t-s)\zeta_\delta(x-y)\,dxdyds=\frac12\|u(t)-v(t)\|_{L^1},
$$
which implies
\begin{align*}
\|u(t)&-v(t)\|_{L^1}\le\iiint_{[0,t]\times\mathbb{T}^{2d}}\Big(\big|u(t,x)-u(t,y)\big|+\big|v(t,y)-v(s,y)\big|+\big|u(s,x)-u(s,y)\big|\\
&\quad+\big|u(s,y)-u(t,y)\big|\Big)\eta_\theta(t-s)\zeta_\delta(x-y)\,dxdyds+C\big(\delta+\varepsilon+\frac{\varepsilon}{\delta}\big),
\end{align*}
where $C$ is a generic constant independent of $\theta,\delta$ and $\varepsilon$. The spatial terms are bounded by $C\delta$ using the BV translation estimate. Since $2\theta<t-t_{m-1}$, $s>t_{m-1}$, the times $s$ and $t$ on the support of $\eta_\theta(t-s)$ lie in the same final slab. Hence the temporal terms are bounded by $C\theta\lesssim\varepsilon$. We finally derive that
$$
\|u(t)-v(t)\|_{L^1}\lesssim \delta+\varepsilon+\frac{\varepsilon}{\delta}.
$$
We conclude the proposition by taking $\delta=\sqrt{\varepsilon}$.
\end{proof}

\section{A space–time error estimate for the minimized residual}\label{sectionerroranal}

In this section we prove Theorem~\ref{thm}. We first compare the optimizer with exact viscous solutions whose left endpoint data agree with the numerical heat-smoothed data, and then compare these auxiliary solutions with the exact splitting flow.  Further, we assume $f$ satisfies Assumption~\ref{localsmooth} throughout this section.

\subsection{Reference and auxiliary flows}

Let $v^\varepsilon$ be the exact splitting flow defined in Proposition~\ref{prop:splitting-flow}.  Equivalently, we may write this splitting flow with the cut-off flux $\widetilde f$, since the heat semigroup and the viscous maximum principle keep $v^\varepsilon$ in the interval where $\widetilde f=f$.  On each slab $I_i=(T_i,T_{i+1}]$, write $v_i^\varepsilon=v^\varepsilon|_{I_i}$, where
\begin{equation}\label{eq:v-split}
\partial_t v_i^\varepsilon+\nabla\cdot {\widetilde f(v_i^\varepsilon)}
-\varepsilon\Delta v_i^\varepsilon=0,\qquad t\in I_i,
\end{equation}
and
\begin{equation}\label{eq:v-split-cont}
v_0^\varepsilon(T_0)=\fe^{\varepsilon^2\Delta}u_0,\qquad
v_i^\varepsilon(T_i)=\fe^{\varepsilon^2\Delta}v_{i-1}^\varepsilon(T_i),
\quad i\ge1 .
\end{equation}
Here $v_0^\varepsilon(T_0)$ is the right limit after the initial heat operator is applied. Therefore, under $\tau\sim\varepsilon$,
\begin{equation}\label{eq:v-u-error}
\|v^\varepsilon(t)-u(t)\|_{L^1(\mathbb{T}^d)} \le C\varepsilon^{1/2},\qquad 0\le t\le T .
\end{equation}
It remains to estimate $u_N^\varepsilon-v^\varepsilon$.

For the numerical residual, set
\begin{equation}\label{eq:gNi}
g_{N,i}:=\partial_tu_{N,i}^\varepsilon+\nabla\cdot\widetilde f(u_{N,i}^\varepsilon)-\varepsilon\Delta u_{N,i}^\varepsilon, \qquad t\in I_i .
\end{equation}

For the proof, define the auxiliary exact solution $\widetilde v_i^\varepsilon$ on each slab by
\begin{equation}\label{eq:tilde-v}
\partial_t\widetilde v_i^\varepsilon+\nabla\cdot\widetilde f(\widetilde v_i^\varepsilon)-\varepsilon\Delta\widetilde v_i^\varepsilon=0,\qquad t\in I_i,
\end{equation}
with the same left endpoint as the numerical solution:
\begin{equation}\label{eq:tilde-v-data}
\widetilde v_0^\varepsilon(T_0)=P_N\fe^{\varepsilon^2\Delta}u_0,\qquad
\widetilde v_i^\varepsilon(T_i)=\fe^{\varepsilon^2\Delta}u_{N,i-1}^\varepsilon(T_i),\quad i\ge1 .
\end{equation}

\subsection{Local residual and one-step error}

We first record the temporal interpolation estimates used below. For a Banach space $X$, $w\in C^{m+1}(\overline I_i;X)$, and $1\le m\le k-1$,
\begin{align*}
\|(I-I_k)w\|_{L^\infty(I_i;X)}&\le C_m\ell_k\tau^m k^{-m}\|\partial_t^m w\|_{L^\infty(I_i;X)},\\
\|(\partial_t I_k-I_k\partial_t)w\|_{L^\infty(I_i;X)}&\le C_m\ell_k\tau^m k^{-m}\|\partial_t^{m+1}w\|_{L^\infty(I_i;X)}.
\end{align*}
Here $C_m$ is independent of $k$ and $\tau$. These bounds follow by rescaling from Jackson's inequality \cite[Theorem 3.8]{RyabenkiiTsynkov2006}, the $O(\ell_k)$ Lebesgue constant, and Forst's derivative interpolation estimate \cite{Forst1983}, using $(\partial_t I_k-I_k\partial_t)w=\partial_t(I_kw-w)-(I_k-I)\partial_t w$. The $X$-valued bounds follow by duality.

We now prove the residual estimate on a single slab.

\begin{lemma}\label{cor:comparison-admissible}
The comparison function $I_kP_N\widetilde v_i^\varepsilon$ belongs to $\widehat{\mathcal A}_{i,k,N}$.
\end{lemma}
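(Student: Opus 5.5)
We have to verify two things: that $I_kP_N\widetilde v_i^\varepsilon$ lies in $S_{k,N}$, and that its value at $T_i$ equals $w_i$. This is essentially bookkeeping. The only analytic input is enough regularity of $\widetilde v_i^\varepsilon$ for the interpolant to make sense.

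\textbf{Regularity.} The left datum $\widetilde v_i^\varepsilon(T_i)$ is a trigonometric polynomial.
\begin{itemize}
\item For $i=0$ it is $P_N\fe^{\varepsilon^2\Delta}u_0$.
\item For $i\ge1$ it is $\fe^{\varepsilon^2\Delta}$ applied to $u_{N,i-1}^\varepsilon(T_i)\in X$; the heat semigroup is Fourier-diagonal and preserves $X$.
\end{itemize}
So the datum is smooth. Since $\widetilde f$ is smooth with bounded derivatives, the viscous problem \eqref{eq:tilde-v} has a unique classical solution on $\overline I_i$, smooth in $(t,x)$ up to $t=T_i$. In particular $\widetilde v_i^\varepsilon\in C^{m+1}(\overline I_i;X')$ for any Sobolev space $X'$, and point evaluation in time is meaningful. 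Quantitative bounds such as Proposition~\ref{proputxn} are not needed here, only qualitative smoothness.

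\textbf{Membership in $S_{k,N}$.} $P_N$ acts only in space and commutes with evaluation at the nodes $t_{i,j}$. Therefore
\[
I_kP_N\widetilde v_i^\varepsilon(t)=\sum_{j=0}^k L_j(t)\,P_N\widetilde v_i^\varepsilon(t_{i,j}),
\]
where $L_j$ are the degree-$k$ Lagrange basis polynomials for the Chebyshev--Lobatto nodes \eqref{cheby}. Each $P_N\widetilde v_i^\varepsilon(t_{i,j})$ lies in $X$. Expanding each $L_j$ in powers of $t$ gives the form $\sum_{j=0}^k t^j v_j$ with $v_j\in X$, so $I_kP_N\widetilde v_i^\varepsilon\in S_{k,N}$.

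\textbf{Endpoint condition.} The node $t_{i,0}=T_i$ belongs to the Chebyshev--Lobatto set, and we noted $(I_kw)(T_i)=w(T_i)$. Hence
\[
(I_kP_N\widetilde v_i^\varepsilon)(T_i)=P_N\widetilde v_i^\varepsilon(T_i).
\]
By \eqref{eq:tilde-v-data} this is $P_N w_i$. Since $w_i\in X$ in both cases (shown above), $P_Nw_i=w_i$. This is exactly the constraint defining $\widehat{\mathcal A}_{i,k,N}$.

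The only step needing any care is $w_i\in X$ for $i\ge1$. It holds because $\fe^{\varepsilon^2\Delta}$ multiplies each mode $\fe^{im\cdot x}$ by $\fe^{-\varepsilon^2|m|^2}$, so it does not enlarge the frequency support.
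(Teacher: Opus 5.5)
Your proof is correct and follows the paper's argument: the left endpoint $T_i=t_{i,0}$ is a Chebyshev--Lobatto node, and the datum $\widetilde v_i^\varepsilon(T_i)=w_i$ already lies in $X$, so $I_kP_N\widetilde v_i^\varepsilon(T_i)=P_Nw_i=w_i$. You also spell out two points the paper leaves implicit: the interpolant is a degree-$k$ polynomial in $t$ with coefficients in $X$, hence lies in $S_{k,N}$, and $w_i\in X$ for $i\ge1$ because $\fe^{\varepsilon^2\Delta}$ is Fourier-diagonal.
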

\begin{proof}
The interpolation nodes include the left endpoint $T_i$, and the endpoint data $\widetilde v_i^\varepsilon(T_i)$ belong to the Fourier space $X$. Therefore
$$
I_kP_N\widetilde v_i^\varepsilon(T_i) =P_N\widetilde v_i^\varepsilon(T_i) =\widetilde v_i^\varepsilon(T_i)=w_i,
$$
which is exactly the defining condition of $\widehat{\mathcal A}_{i,k,N}$.
\end{proof}

\begin{lemma}[Comparison with the reference flow]\label{lem:reference-comparison}
Set
$$
E_0:=\|\widetilde v_0^\varepsilon(T_0)-v_0^\varepsilon(T_0)\|_{L^1},\qquad
E_i:=\|u_{N,i-1}^\varepsilon(T_i)-v_{i-1}^\varepsilon(T_i)\|_{L^1}\quad(i\ge1).
$$
For every integer $m\ge1$,
\begin{equation}\label{eq:tilde-v0}
E_0\le C_m N^{-m}\varepsilon^{-m},
\end{equation}
and, on each slab $I_i$,
\begin{equation}\label{eq:tilde-v-recursion}
\|\widetilde v_i^\varepsilon-v_i^\varepsilon\|_{L^\infty(I_i;L^1)}\le E_i.
\end{equation}
Here $C_m$ is independent of $N$ and $\varepsilon$.
\end{lemma}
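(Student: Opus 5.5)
Both claims reduce to earlier results: \eqref{eq:tilde-v0} is a Fourier-tail estimate for heat-smoothed data, and \eqref{eq:tilde-v-recursion} is the $L^1$-contraction of Lemma~\ref{lemcontra} on a single slab.

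\textbf{The initial bound \eqref{eq:tilde-v0}.} Here $E_0=\|(I-P_N)\fe^{\varepsilon^2\Delta}u_0\|_{L^1}$. I would use the standard tail estimate
$$
\|(I-P_N)w\|_{L^1}\lesssim \ell_N N^{-m}\|w\|_{W^{m,1}}\quad\text{or}\quad \|(I-P_N)w\|_{L^1}\le\|(I-P_N)w\|_{L^2}\lesssim N^{-m}\|w\|_{H^m}.
\]
The $L^2$ route avoids the logarithmic Lebesgue constant of $P_N$ in $L^1$, so I would take it. It remains to bound $\|\fe^{\varepsilon^2\Delta}u_0\|_{H^m}$.

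Heat smoothing on the torus gives $\|\nabla^m\fe^{s\Delta}w\|_{L^2}\lesssim s^{-m/2}\|w\|_{L^2}$ with $s=\varepsilon^2$. Since $u_0\in L^\infty$, we have $\|u_0\|_{L^2}\lesssim1$, and hence $\|\fe^{\varepsilon^2\Delta}u_0\|_{H^m}\lesssim\varepsilon^{-m}$. This yields $E_0\le C_mN^{-m}\varepsilon^{-m}$ directly, with no BV input needed.

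Alternatively, in Fourier variables,
$$
\sum_{|\xi|_\infty>N}\fe^{-2\varepsilon^2|\xi|^2}|\hat u_0(\xi)|^2\le \sup_{|\xi|>N}\fe^{-2\varepsilon^2|\xi|^2}\|u_0\|_{L^2}^2,
$$
and the supremum is $\lesssim_m (N\varepsilon)^{-2m}$. This gives the same bound and is even exponentially small when $N\varepsilon\to\infty$.

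\textbf{The slab recursion \eqref{eq:tilde-v-recursion}.} On $I_i$, both $\widetilde v_i^\varepsilon$ and $v_i^\varepsilon$ solve the same viscous equation with flux $\widetilde f$. The proof of Lemma~\ref{lemcontra} only uses that $\widetilde f'$ is bounded, which holds since $\widetilde f$ is smooth with globally bounded derivatives. Hence for $t\in I_i$,
$$
\|\widetilde v_i^\varepsilon(t)-v_i^\varepsilon(t)\|_{L^1}\le\|\widetilde v_i^\varepsilon(T_i)-v_i^\varepsilon(T_i)\|_{L^1}.
$$

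For $i=0$ the right-hand side is exactly $E_0$ by definition. For $i\ge1$ it equals $\|\fe^{\varepsilon^2\Delta}(u_{N,i-1}^\varepsilon(T_i)-v_{i-1}^\varepsilon(T_i))\|_{L^1}$. Since the heat semigroup is an $L^1$ contraction (Remark~\ref{rem2}), this is at most $E_i$.

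One technicality is that Lemma~\ref{lemcontra} is stated for $L^\infty\cap\bv$ data. The data here are smooth: trigonometric polynomials, and heat-smoothed splitting-flow values. So the lemma applies, after replacing $f$ by $\widetilde f$, and the argument is unchanged.

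\textbf{Expected obstacle.} There is little difficulty. The main point of care is to confirm that the contraction argument uses only $\sup|\widetilde f'|<\infty$ rather than the range bound. This matters because $\widetilde v_i^\varepsilon$ starts from numerical data that may leave $[u_-,u_+]$, so the global boundedness of $\widetilde f'$ is exactly what is needed.
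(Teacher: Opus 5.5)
Your proposal is correct and follows the paper's proof essentially step for step. For $E_0$ you pass from $L^1$ to $L^2$ on the torus, use the $L^2$ spectral tail estimate, and bound $\|\fe^{\varepsilon^2\Delta}u_0\|_{H^m}\lesssim\varepsilon^{-m}\|u_0\|_{L^2}$; for the slab bound you use $L^1$-contraction of the heat semigroup and then Lemma~\ref{lemcontra} with the cut-off flux $\widetilde f$, whose globally bounded derivative is what makes that lemma applicable to numerical data outside $[u_-,u_+]$.
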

\begin{proof}
The initial data differ only by the spatial projection. Using the
$L^2$ spectral estimate and heat smoothing gives
$$
E_0=\|(P_N-I)\fe^{\varepsilon^2\Delta}u_0\|_{L^1} \lesssim\|(P_N-I)\fe^{\varepsilon^2\Delta}u_0\|_{L^2}\lesssim N^{-m}\|\fe^{\varepsilon^2\Delta}u_0\|_{H^m}\lesssim N^{-m}\varepsilon^{-m}\|u_0\|_{L^2}.
$$
This avoids endpoint $L^1$-boundedness of $P_N$. For $i\ge1$, \eqref{eq:tilde-v-data}, \eqref{eq:v-split-cont}, and heat-semigroup contraction imply
$$
\|\widetilde v_i^\varepsilon(T_i)-v_i^\varepsilon(T_i)\|_{L^1}=\|\fe^{\varepsilon^2\Delta}
(u_{N,i-1}^\varepsilon(T_i)-v_{i-1}^\varepsilon(T_i))\|_{L^1}\le E_i.
$$
The $L^1$-contraction of the viscous flow (Lemma~\ref{lemcontra}) then proves \eqref{eq:tilde-v-recursion} for every $i\ge0$.
\end{proof}

\begin{lemma}\label{lem:local-residual-opti}
Assume that $\widetilde v_i^\varepsilon(T_i)$ satisfies the hypotheses of Proposition~\ref{proputxn}. Define
\begin{equation}\label{eq:ri}
R_i:=\partial_tI_kP_N\widetilde v_i^\varepsilon+\nabla\cdot {\widetilde f(I_kP_N\widetilde v_i^\varepsilon)}-\varepsilon\Delta I_kP_N\widetilde v_i^\varepsilon.
\end{equation}
Then
\begin{equation}\label{eq:ri-bound}
\|R_i\|_{L^1(I_i\times\mathbb{T}^d)}\le C\tau\bigl(\varepsilon^{-m-3\delta}\tau^m \ell_kk^{-m}
+\varepsilon^{-m-3\delta}N^{-m}\bigr).
\end{equation}
Moreover, the numerical residual satisfies
\begin{equation}\label{eq:g-bound-opti}
\|g_{N,i}\|_{L^1(I_i\times\mathbb{T}^d)}\le\tau E_i+C\tau\bigl(\varepsilon^{-m-3\delta}\tau^m \ell_kk^{-m}+\varepsilon^{-m-3\delta}N^{-m}\bigr).
\end{equation}
\end{lemma}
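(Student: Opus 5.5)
The plan is to use that $\widetilde v_i^\varepsilon$ solves the cut-off viscous equation \eqref{eq:tilde-v} exactly, and to write
$$
w:=I_kP_N\widetilde v_i^\varepsilon-\widetilde v_i^\varepsilon=I_k(P_N-I)\widetilde v_i^\varepsilon+(I_k-I)\widetilde v_i^\varepsilon .
$$
Subtracting \eqref{eq:tilde-v} from \eqref{eq:ri} gives
$$
R_i=\partial_t w+\nabla\cdot\bigl(\widetilde f(\widetilde v_i^\varepsilon+w)-\widetilde f(\widetilde v_i^\varepsilon)\bigr)-\varepsilon\Delta w .
$$
It then suffices to prove the pointwise-in-time bound
$$
\|R_i(t)\|_{L^1(\mathbb T^d)}\lesssim\varepsilon^{-m-3\delta}\bigl(\tau^m\ell_kk^{-m}+N^{-m}\bigr),\qquad t\in I_i,
$$
and integrate over $I_i$, which produces the factor $\tau$. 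Throughout, I apply Proposition~\ref{proputxn} to $\widetilde v_i^\varepsilon$ with $\widetilde f$ in place of $f$. This is legitimate because $\widetilde f$ is globally smooth and the hypothesis on $\widetilde v_i^\varepsilon(T_i)$ is assumed. It gives $\|\partial_t^l\widetilde v_i^\varepsilon\|_{L^\infty W^{s,p}}\lesssim\varepsilon^{-s-l+1/p-\delta}$.

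\textbf{Temporal part.} The three terms of $R_i$ are handled with different interpolation bounds.
\begin{itemize}
\item For the time derivative, I use the identity $\partial_t(I_k-I)=(\partial_tI_k-I_k\partial_t)+(I_k-I)\partial_t$. By the two interpolation estimates, $\|\partial_t(I_k-I)\widetilde v_i^\varepsilon\|_{L^1}$ is at most $C\ell_k\tau^mk^{-m}\bigl(\|\partial_t^{m+1}\widetilde v_i^\varepsilon\|_{L^\infty L^1}+\|\partial_t^{m+1}\widetilde v_i^\varepsilon\|_{L^\infty L^1}\bigr)$. By the Proposition this is $\lesssim\ell_k\tau^mk^{-m}\varepsilon^{-m-\delta}$.
\item For the viscous term, $\varepsilon\|\Delta(I_k-I)\widetilde v_i^\varepsilon\|_{L^1}\lesssim\varepsilon\,\ell_k\tau^mk^{-m}\varepsilon^{-m-2+1-\delta}$, which is of the same order.
\item For the flux difference, I write
$$
\nabla\cdot\bigl(\widetilde f(a)-\widetilde f(b)\bigr)=\widetilde f'(a)\cdot\nabla(a-b)+\bigl(\widetilde f'(a)-\widetilde f'(b)\bigr)\cdot\nabla b .
$$
This is bounded by $\|\nabla w\|_{L^1}+\|w\|_{L^1}\|\nabla\widetilde v_i^\varepsilon\|_{L^\infty}$. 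The factor $\|\nabla \widetilde v_i^\varepsilon\|_{L^\infty}\lesssim\varepsilon^{-1-\delta}$ combines with $\|w\|_{L^1}\lesssim\ell_k\tau^mk^{-m}\varepsilon^{-m+1-\delta}$ to give at most $\varepsilon^{-m-2\delta}$. The gradient term is at most $\varepsilon^{-m-\delta}$.
\end{itemize}

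\textbf{Spatial part.} The term $I_k(P_N-I)\widetilde v_i^\varepsilon$ is treated as in Lemma~\ref{lem:reference-comparison}, through $L^1\hookrightarrow L^2$ on the torus, the Lebesgue constant $\ell_k$, and the $H^s$ projection estimate. Extra derivatives are handled by raising the Sobolev index. For example, $\varepsilon\|\Delta(P_N-I)\widetilde v_i^\varepsilon\|_{L^2}\lesssim\varepsilon N^{-m-2}\|\widetilde v_i^\varepsilon\|_{H^{m+4}}$. Since $N\varepsilon\ge1$, trading $N^{-1}$ against $\varepsilon^{-1}$ reduces every such term to $N^{-m}\varepsilon^{-m-2\delta}$ or better.

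For the time derivative of the projected part, $\partial_t$ commutes with $P_N$, so the same commutator splitting applies to $(P_N-I)\partial_t\widetilde v_i^\varepsilon$. The loss of $\varepsilon^{-1}$ from $\partial_t$ is again absorbed by one extra power of $N^{-1}$. The same applies to the $\ell_k$ factor if I choose to keep it. The small losses collect to at most $\varepsilon^{-3\delta}$. This bookkeeping is the step I expect to be the main obstacle, since each estimate must land on exactly $\varepsilon^{-m}$ with no stray negative powers.

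\textbf{Numerical residual.} By Lemma~\ref{cor:comparison-admissible}, $I_kP_N\widetilde v_i^\varepsilon$ is admissible, so minimality in \eqref{eq:min} gives
$$
\|g_{N,i}\|_{L^1}\le\|R_i\|_{L^1}+\|\phi(I_kP_N\widetilde v_i^\varepsilon)\|_{L^1}.
$$
To bound the penalty, I use that $\phi$ is the distance to $[u_-,u_+]$, hence $1$-Lipschitz, and that $v_i^\varepsilon$ takes values in $[u_-,u_+]$. Therefore
$$
\phi(I_kP_N\widetilde v_i^\varepsilon)\le|w|+|\widetilde v_i^\varepsilon-v_i^\varepsilon| .
$$
Integrating over $I_i\times\mathbb T^d$ and using \eqref{eq:tilde-v-recursion} gives $\tau E_i$ plus $\tau\|w\|_{L^\infty L^1}$. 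The latter was already bounded above by the right-hand side of \eqref{eq:ri-bound}. This yields \eqref{eq:g-bound-opti}.
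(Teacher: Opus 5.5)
Your skeleton follows the paper: admissibility of $I_kP_N\widetilde v_i^\varepsilon$, the linearized splitting of the flux difference, the temporal Jackson/commutator bounds, and the penalty step via $\phi=\operatorname{dist}(\cdot,[u_-,u_+])$ and \eqref{eq:tilde-v-recursion}. The temporal part and the penalty step are fine.

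\textbf{The gap is in the spatial part}, exactly the bookkeeping you flagged.

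\emph{First defect: the $L^2$ route loses $\varepsilon^{-1/2}$.} Passing $(P_N-I)$ through $\|\cdot\|_{L^1}\lesssim\|\cdot\|_{L^2}$ forces you to use Proposition~\ref{proputxn} with $p=2$. That gives $\|\partial_t^l\widetilde v_i^\varepsilon\|_{L^\infty H^s}\lesssim\varepsilon^{-s-l+1/2-\delta}$ instead of $\varepsilon^{-s-l+1-\delta}$. This half power is real: across a viscous layer of width $\varepsilon$, the $H^s$ norm exceeds the $W^{s,1}$ norm by $\varepsilon^{-1/2}$.

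Hence every residual term carrying one more derivative than $w$ comes out of size about $\ell_kN^{-m}\varepsilon^{-m-1/2-\delta}$. For instance, $\varepsilon\|\Delta(P_N-I)\widetilde v_i^\varepsilon\|_{L^2}\lesssim\varepsilon N^{-m}\varepsilon^{-m-2+1/2-\delta}=N^{-m}\varepsilon^{-m-1/2-\delta}$. The same happens for $\nabla w$, for $\|w\|_{L^1}\|\nabla\widetilde v_i^\varepsilon\|_{L^\infty}$, and for $I_k(P_N-I)\partial_t\widetilde v_i^\varepsilon$. The commutator $(\partial_tI_k-I_k\partial_t)(P_N-I)\widetilde v_i^\varepsilon$ that you generate from the projected part likewise comes out as $\ell_k\tau^mk^{-m}\varepsilon^{-m-1/2-\delta}$.

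Raising the Sobolev index cannot repair this. Each extra derivative multiplies the bound by $(N\varepsilon)^{-1}\le1$, which is at best neutral and never produces the missing $\varepsilon^{1/2}$. Moreover, $N\varepsilon\ge1$ is not a hypothesis of the lemma. To recover $\varepsilon^{1/2}$ you would need the quantitative coupling $N\varepsilon=\varepsilon^{-\eta}$ and roughly $1/(2\eta)$ extra derivatives, which is a different statement.

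\emph{Second defect: a stray $\ell_k$.} Your split $w=I_k(P_N-I)\widetilde v_i^\varepsilon+(I_k-I)\widetilde v_i^\varepsilon$ puts $I_k$ in front of the spectral error. The Lebesgue constant then attaches a factor $\ell_k$ to the $N^{-m}$ term. That factor is absent from \eqref{eq:ri-bound} and cannot be absorbed, because $k$ is not tied to $N$ or $\varepsilon$; in Theorem~\ref{thm} it is taken large after $N$.

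\textbf{How the paper avoids both issues.} It writes $Z_i=(I_k-I)P_N\widetilde v_i^\varepsilon+(P_N-I)\widetilde v_i^\varepsilon$, so $\ell_k$ only ever multiplies $\tau^mk^{-m}$. It measures every term in $L^\infty_tL^{1+\delta}_x$, using $\|\cdot\|_{L^1(I_i\times\mathbb T^d)}\lesssim\tau\|\cdot\|_{L^\infty_tL^{1+\delta}_x}$. On $L^{1+\delta}$ the sharp projection $P_N$ is bounded, with a $\delta$-dependent constant. Proposition~\ref{proputxn} with $p=1+\delta$ then loses only $\varepsilon^{-\delta}$-type factors, which accumulate to $\varepsilon^{-3\delta}$.

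The paper also applies $I_kP_N$ to \eqref{eq:tilde-v} so that the viscous term cancels. What remains is $\nabla\cdot(\widetilde f(I_kP_N\widetilde v_i^\varepsilon)-\widetilde f(\widetilde v_i^\varepsilon))$, $(I-I_kP_N)\nabla\cdot\widetilde f(\widetilde v_i^\varepsilon)$, and $(\partial_tI_k-I_k\partial_t)P_N\widetilde v_i^\varepsilon$. Your direct subtraction is an equivalent identity. It would go through once you adopt this decomposition of $Z_i$ and the $L^{1+\delta}$ norm.
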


\begin{proof}
Applying $I_kP_N$ to \eqref{eq:tilde-v}, and using that $P_N$ and $\Delta$ commute, gives
$$
I_kP_N\partial_t\widetilde v_i^\varepsilon +I_kP_N\nabla\cdot {\widetilde f(\widetilde v_i^\varepsilon)}-\varepsilon\Delta(I_kP_N\widetilde v_i^\varepsilon)=0.
$$
Here we do not commute $I_k$ with $\partial_t$. Therefore the residual is
\begin{align*}
R_i&=\nabla\cdot {\widetilde f(I_kP_N\widetilde v_i^\varepsilon)}-I_kP_N\nabla\cdot {\widetilde f(\widetilde v_i^\varepsilon)}+(\partial_tI_k-I_k\partial_t)P_N\widetilde v_i^\varepsilon .
\end{align*}
We split
\begin{align}\label{eq:R-split}
\|R_i\|_{L^1(I_i\times\mathbb{T}^d)}&\le\tau\|\nabla\cdot({\widetilde f}(I_kP_N\widetilde v_i^\varepsilon)-{\widetilde f}(\widetilde v_i^\varepsilon))\|_{L_t^\infty L_x^{1+\delta}}\notag\\
&\quad+\tau\|(I-I_kP_N)\nabla\cdot {\widetilde f}(\widetilde v_i^\varepsilon)\|_{L_t^\infty L_x^{1+\delta}}\notag\\
&\quad+\tau\|(\partial_tI_k-I_k\partial_t)P_N\widetilde v_i^\varepsilon\|_{L_t^\infty L_x^{1+\delta}}.
\end{align}
All norms on the right are taken over $I_i\times\mathbb{T}^d$.

For the first term in \eqref{eq:R-split}, write
\begin{equation}\label{defzi}
Z_i:=I_kP_N\widetilde v_i^\varepsilon-\widetilde v_i^\varepsilon=(I_k-I)P_N\widetilde v_i^\varepsilon+(P_N-I)\widetilde v_i^\varepsilon .
\end{equation}
For each component $\widetilde f_\ell$ of $\widetilde f$,
$$
\partial_{x_\ell}[\widetilde f_\ell(I_kP_N\widetilde v_i^\varepsilon)-\widetilde f_\ell(\widetilde v_i^\varepsilon)]=\widetilde f_\ell'(I_kP_N\widetilde v_i^\varepsilon)\partial_{x_\ell}Z_i+\bigl(\widetilde f_\ell'(I_kP_N\widetilde v_i^\varepsilon)-\widetilde f_\ell'(\widetilde v_i^\varepsilon)\bigr)\partial_{x_\ell}\widetilde v_i^\varepsilon.
$$
All derivatives of $\widetilde f$ appearing below are globally bounded by the cut-off construction. Thus
\begin{align}\label{eq:nonlinear-residual}
&\|\nabla\cdot(\widetilde f(I_kP_N\widetilde v_i^\varepsilon)-\widetilde f(\widetilde v_i^\varepsilon))\|_{L_t^\infty L_x^{1+\delta}}\notag\\
&\qquad\lesssim\|Z_i\|_{L_t^\infty W_x^{1,1+\delta}}+\|\nabla \widetilde v_i^\varepsilon\|_{L_t^\infty L_x^\infty}\|Z_i\|_{L_t^\infty L_x^{1+\delta}} .
\end{align}
By Proposition~\ref{proputxn}, for $j=0,1$ and $1\le m\le k-1$,
\begin{align}
\|(I-I_k)\partial_x^j \widetilde v_i^\varepsilon\|_{L_t^\infty L_x^{1+\delta}}
&\lesssim\tau^m \ell_kk^{-m}\|\partial_t^m\partial_x^j\widetilde v_i^\varepsilon\|_{L_t^\infty L_x^{1+\delta}}\lesssim\varepsilon^{1-j-m-2\delta}\tau^m \ell_kk^{-m},\label{eq:time-approx-w}\\
\|(I-P_N)\partial_x^j \widetilde v_i^\varepsilon\|_{L_t^\infty L_x^{1+\delta}}
&\lesssim N^{-m}\|\partial_x^j \widetilde v_i^\varepsilon\|_{L_t^\infty W_x^{m,1+\delta}}
\lesssim\varepsilon^{1-j-m-2\delta}N^{-m}.\label{eq:space-approx-w}
\end{align}
Here and below, the sharp Fourier projection $P_N$ is used on $L^{1+\delta}(\mathbb T^d)$ only for a fixed $\delta>0$. Its operator norm may depend on $\delta$, and this dependence is included in the constants. In the final parameter choice, $\delta$ is fixed after the target exponent $\gamma$, so this dependence is absorbed into $C_\gamma$. Moreover,
$$
\|\nabla \widetilde v_i^\varepsilon\|_{L_t^\infty L_x^\infty}\lesssim\varepsilon^{-1-\delta}
$$
by \eqref{utxnbd} with $p=\infty$. Substituting \eqref{eq:time-approx-w}--\eqref{eq:space-approx-w} into \eqref{eq:nonlinear-residual} first gives the raw bound
$$
\|\nabla\cdot(\widetilde f(I_kP_N\widetilde v_i^\varepsilon)-\widetilde f(\widetilde v_i^\varepsilon))\|_{L_t^\infty L_x^{1+\delta}}\lesssim\varepsilon^{-m-3\delta}\tau^m\ell_kk^{-m}+\varepsilon^{-m-3\delta}N^{-m}.
$$
Indeed, the term containing $\|\nabla\widetilde v_i^\varepsilon\|_{L_t^\infty L_x^\infty}$ contributes
$$
\varepsilon^{-1-\delta}\cdot \varepsilon^{1-m-2\delta} =\varepsilon^{-m-3\delta}.
$$
Thus
\begin{equation}\label{eq:first-residual-term}
\|\nabla\cdot({\widetilde f}(I_kP_N\widetilde v_i^\varepsilon)-{\widetilde f}(\widetilde v_i^\varepsilon))\|_{L_t^\infty L_x^{1+\delta}}\lesssim\varepsilon^{-m-3\delta}\tau^m \ell_kk^{-m}+\varepsilon^{-m-3\delta}N^{-m}.
\end{equation}

For the second term in \eqref{eq:R-split}, we use the same temporal interpolation and spectral projection estimates, now applied to $\nabla\cdot\widetilde f(\widetilde v_i^\varepsilon)$. Proposition~\ref{proputxn}, with $g=\widetilde f_\ell$, implies
\begin{align}
\|(I-I_k)\nabla\cdot {\widetilde f}(\widetilde v_i^\varepsilon)\|_{L_t^\infty L_x^{1+\delta}}
&\lesssim \tau^m \ell_kk^{-m}\|\partial_t^m\nabla\cdot {\widetilde f}(\widetilde v_i^\varepsilon)\|_{L_t^\infty L_x^{1+\delta}}\lesssim \varepsilon^{-m-3\delta}\tau^m \ell_kk^{-m},\label{eq:flux-time}\\
\|(I-P_N)\nabla\cdot {\widetilde f}(\widetilde v_i^\varepsilon)\|_{L_t^\infty L_x^{1+\delta}}
&\lesssim N^{-m}\|\nabla\cdot {\widetilde f}(\widetilde v_i^\varepsilon)\|_{L_t^\infty W_x^{m,1+\delta}}\lesssim \varepsilon^{-m-3\delta}N^{-m}.\label{eq:flux-space}
\end{align}
It remains to estimate the commutator in \eqref{eq:R-split}. We write
$$
(\partial_tI_k-I_k\partial_t)P_N\widetilde v_i^\varepsilon =\partial_t\bigl((I_k-I)P_N\widetilde v_i^\varepsilon\bigr)-(I_k-I)P_N\partial_t\widetilde v_i^\varepsilon .
$$
The temporal commutator estimate established at the start of this subsection yields
\begin{align}
&\|(\partial_tI_k-I_k\partial_t)P_N\widetilde v_i^\varepsilon\|_{L_t^\infty L_x^{1+\delta}}
\notag\\
&\qquad\lesssim \tau^m \ell_kk^{-m} \|\partial_t^{m+1}P_N\widetilde v_i^\varepsilon
\|_{L_t^\infty L_x^{1+\delta}} \lesssim\varepsilon^{-m-3\delta}\tau^m \ell_kk^{-m}.
\label{eq:commutator-time}
\end{align}
Here $P_N$ is bounded on $L^{1+\delta}(\mathbb T^d)$, and the last inequality follows from Proposition~\ref{proputxn}. Combining \eqref{eq:R-split}, \eqref{eq:first-residual-term},\eqref{eq:flux-time}, \eqref{eq:flux-space}, and \eqref{eq:commutator-time} proves \eqref{eq:ri-bound}. By Lemma~\ref{cor:comparison-admissible}, $I_kP_N\widetilde v_i^\varepsilon\in\widehat{\mathcal A}_{i,k,N}$. Hence the penalized minimizing property \eqref{eq:min} gives
$$
\|g_{N,i}\|_{L^1(I_i\times\mathbb{T}^d)}+\|\phi(u_{N,i}^\varepsilon)\|_{L^1(I_i\times\mathbb{T}^d)}\le\|R_i\|_{L^1(I_i\times\mathbb{T}^d)}+\|\phi(I_kP_N\widetilde v_i^\varepsilon)\|_{L^1(I_i\times\mathbb{T}^d)}.
$$
By \eqref{rangepr}, the reference solution $v_i^\varepsilon$ takes values in $[u_-,u_+]$. Since $\phi(q)=\operatorname{dist}(q,[u_-,u_+])$, we have
$$
\phi(I_kP_N\widetilde v_i^\varepsilon) \le |I_kP_N\widetilde v_i^\varepsilon-v_i^\varepsilon|
\le |Z_i|+|\widetilde v_i^\varepsilon-v_i^\varepsilon|.
$$
Thus Lemma~\ref{lem:reference-comparison} and \eqref{eq:time-approx-w}--\eqref{eq:space-approx-w} give
\begin{align*}
\|\phi(I_kP_N\widetilde v_i^\varepsilon)\|_{L^1(I_i\times\mathbb T^d)} &\le \tau\|Z_i\|_{L^\infty(I_i;L^1)}+\tau E_i\\
&\le C\tau\varepsilon^{-m-3\delta} \bigl(\ell_k\tau^m k^{-m}+N^{-m}\bigr)+\tau E_i.
\end{align*}
Together with \eqref{eq:ri-bound}, this proves \eqref{eq:g-bound-opti}.
\end{proof}

\begin{proposition}\label{prop:local-error-tilde}
Assume that $\widetilde v_i^\varepsilon(T_i)$ satisfies the assumptions of Proposition~\ref{proputxn}. Assume also that $\tau\sim\varepsilon$. Then
\begin{equation}\label{eq:e-tilde-local}
\|u_{N,i}^\varepsilon-I_kP_N\widetilde v_i^\varepsilon\|_{L^\infty(I_i;L^1)}\le\tau E_i+C\tau \bigl(\varepsilon^{-m-3\delta}\tau^m \ell_kk^{-m}+\varepsilon^{-m-3\delta}N^{-m}\bigr),
\end{equation}
and consequently
\begin{equation}\label{eq:uN-tilde-local}
\|u_{N,i}^\varepsilon-\widetilde v_i^\varepsilon\|_{L^\infty(I_i;L^1)} \le\tau E_i+C\tau \bigl(\varepsilon^{-m-3\delta}\tau^m \ell_kk^{-m} +\varepsilon^{-m-3\delta}N^{-m}\bigr).
\end{equation}
\end{proposition}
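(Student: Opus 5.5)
The plan is to compare the minimizer $u_{N,i}^\varepsilon$ with the admissible comparison function $I_kP_N\widetilde v_i^\varepsilon$ through an $L^1$-stability estimate for the viscous equation with a source term. Set $e:=u_{N,i}^\varepsilon-I_kP_N\widetilde v_i^\varepsilon$ on $I_i$. By Lemma~\ref{cor:comparison-admissible}, both functions take the value $w_i$ at $T_i$, so $e(T_i)=0$. Subtracting the definitions \eqref{eq:gNi} and \eqref{eq:ri} gives
\begin{equation*}
\partial_t e+\nabla\cdot\bigl(a(t,x)\,e\bigr)-\varepsilon\Delta e=g_{N,i}-R_i,\qquad
a=\int_0^1\widetilde f'\bigl(I_kP_N\widetilde v_i^\varepsilon+\theta e\bigr)\,d\theta .
\end{equation*}
Both functions lie in $S_{k,N}$, so they are smooth in $(t,x)$ and the manipulations below are classical. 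Because $\widetilde f$ has globally bounded derivatives, $a$ is bounded uniformly, independently of any range information on $u_{N,i}^\varepsilon$. This is precisely where the cut-off flux is needed.

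Next I repeat the argument of Lemma~\ref{lemcontra}: test with $\gamma_\delta'(e)$. Compared with \eqref{dtintrv}, there is one extra term, $\int\gamma_\delta'(e)(g_{N,i}-R_i)\,dx$, and it is bounded by $\|g_{N,i}-R_i\|_{L^1_x}$ since $|\gamma_\delta'|\le1$. The transport and viscous terms are handled exactly as before and contribute $\lesssim\delta/\varepsilon$. Integrate from $T_i$ to $t\in I_i$ and let $\delta\to0$. This yields
\begin{equation*}
\|e\|_{L^\infty(I_i;L^1)}\le\|g_{N,i}\|_{L^1(I_i\times\mathbb T^d)}+\|R_i\|_{L^1(I_i\times\mathbb T^d)} .
\end{equation*}
Inserting \eqref{eq:ri-bound} and \eqref{eq:g-bound-opti} from Lemma~\ref{lem:local-residual-opti} gives \eqref{eq:e-tilde-local} directly. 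Note that the $\tau E_i$ term enters only once, through the penalty comparison.

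For \eqref{eq:uN-tilde-local}, I use the triangle inequality
$$
\|u_{N,i}^\varepsilon-\widetilde v_i^\varepsilon\|_{L^\infty L^1}\le\|e\|_{L^\infty L^1}+\|Z_i\|_{L^\infty L^1},
$$
with $Z_i$ from \eqref{defzi}. Estimates \eqref{eq:time-approx-w} and \eqref{eq:space-approx-w} with $j=0$, together with $L^1\lesssim L^{1+\delta}$ on the torus, give
$$
\|Z_i\|_{L^\infty L^1}\lesssim\varepsilon^{1-m-2\delta}\bigl(\tau^m\ell_kk^{-m}+N^{-m}\bigr).
$$
The hypothesis $\tau\sim\varepsilon$ converts the prefactor $\varepsilon^{1-m-2\delta}$ into $\lesssim\tau\,\varepsilon^{-m-3\delta}$, which matches the form of \eqref{eq:uN-tilde-local}.

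The main point requiring care is the stability step. The standard contraction argument relies on the maximum principle to bound $a$, and that principle is not available for the spectral minimizer. The step therefore hinges on the global Lipschitz bound of $\widetilde f$ and on the vanishing of the $\delta/\varepsilon$ error as $\delta\to0$ for fixed $\varepsilon$. Everything else is bookkeeping of the bounds already established in Lemma~\ref{lem:local-residual-opti}.
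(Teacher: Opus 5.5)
Your proposal is correct and follows the paper's proof essentially step by step. You use the same error $e_i=u_{N,i}^\varepsilon-I_kP_N\widetilde v_i^\varepsilon$ with $e_i(T_i)=0$, and the same linearized coefficient $a_i$, bounded through the cut-off flux $\widetilde f$. You obtain the same $L^1$-stability estimate with source by testing with a smoothed sign function, insert \eqref{eq:ri-bound} and \eqref{eq:g-bound-opti}, and finish with the same triangle inequality, absorbing $Z_i$ via $\tau\sim\varepsilon$.
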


\begin{proof}
Set
$$
e_i:=u_{N,i}^\varepsilon-I_kP_N\widetilde v_i^\varepsilon.
$$
Subtracting the equation defining $R_i$, \eqref{eq:ri}, from \eqref{eq:gNi}, we obtain
$$
\partial_te_i
+\nabla\cdot\bigl({\widetilde f}(u_{N,i}^\varepsilon) -{\widetilde f}(I_kP_N\widetilde v_i^\varepsilon)\bigr) -\varepsilon\Delta e_i=g_{N,i}-R_i .
$$
{
Since the cut-off flux has globally bounded derivative, we may write
$$
{\widetilde f}(u_{N,i}^\varepsilon)-{\widetilde f}(I_kP_N\widetilde v_i^\varepsilon)
=a_i(t,x)e_i,\qquad a_i(t,x):=\int_0^1 \widetilde f'\bigl(I_kP_N\widetilde v_i^\varepsilon+\theta e_i\bigr)\,d\theta,
$$
with $\|a_i\|_{L^\infty}\le \|\widetilde f'\|_{L^\infty(\mathbb R)}$.
}
Testing by the usual smooth approximation of $\sgn(e_i)$, integrating over $\mathbb{T}^d$, and passing to the limit gives the $L^1$-stability estimate with source:
$$
\|e_i(t)\|_{L^1} \le \|e_i(T_i)\|_{L^1} +\|g_{N,i}\|_{L^1(I_i\times\mathbb{T}^d)}
+\|R_i\|_{L^1(I_i\times\mathbb{T}^d)}.
$$
At the left endpoint,
$$
I_kP_N\widetilde v_i^\varepsilon(T_i)=P_N\widetilde v_i^\varepsilon(T_i)=u_{N,i}^\varepsilon(T_i),
$$
because the endpoint data in \eqref{eq:tilde-v-data} belong to the Fourier space $X$. Hence $e_i(T_i)=0$. The residual bound for $g_{N,i}$ is \eqref{eq:g-bound-opti}; the bound for $R_i$ follows directly from Lemma~\ref{lem:local-residual-opti}. This proves \eqref{eq:e-tilde-local}. Finally,
$$
\|u_{N,i}^\varepsilon-\widetilde v_i^\varepsilon\|_{L^1}\le\|e_i\|_{L^1}+\|(I-I_kP_N)\widetilde v_i^\varepsilon\|_{L^1},
$$
and the last term is bounded by \eqref{eq:time-approx-w} and \eqref{eq:space-approx-w} with $j=0$. Since $\tau\sim\varepsilon$, this term is absorbed by the right-hand side of \eqref{eq:e-tilde-local}. This proves \eqref{eq:uN-tilde-local}.
\end{proof}

\subsection{Global propagation and conclusion}

Combining Lemma~\ref{lem:reference-comparison} with Proposition~\ref{prop:local-error-tilde} gives, for $t\in I_i$,
\begin{align}\label{eq:global-recursion}
\|u_{N,i}^\varepsilon(t)-v_i^\varepsilon(t)\|_{L^1} &\le\|u_{N,i}^\varepsilon(t)-\widetilde v_i^\varepsilon(t)\|_{L^1} +\|\widetilde v_i^\varepsilon(t)-v_i^\varepsilon(t)\|_{L^1}\notag\\
&\le (1+\tau)E_i+C\tau\bigl(\varepsilon^{-m-3\delta}\tau^m \ell_kk^{-m}+\varepsilon^{-m-3\delta}N^{-m}\bigr),
\end{align}
where $E_i$ is defined above and $E_0\lesssim N^{-m}\varepsilon^{-m}$. Assuming for the moment that the initial regularity bounds, and hence the local-error constant, are uniform over the time slabs, discrete Gronwall applied to \eqref{eq:global-recursion}, using $(1+\tau)^M\le e^T$, yields
\begin{align}
\|u_N^\varepsilon(t)-v^\varepsilon(t)\|_{L^1} &\le C \fe^T \bigl(\varepsilon^{-m-3\delta}\tau^m \ell_kk^{-m}+\varepsilon^{-m-3\delta}N^{-m}\bigr)+C\fe^TN^{-m}\varepsilon^{-m}\label{eq:uN-v-global}\\
&\le C\bigl(\varepsilon^{-m-3\delta}\tau^m \ell_kk^{-m}+\varepsilon^{-m-3\delta}N^{-m}\bigr),
\qquad 0\le t\le T .\label{eq:uN-v-simple}
\end{align}

It remains to justify the regularity assumption used when applying Proposition~\ref{proputxn} to $\widetilde v_i^\varepsilon$. For the first slab, by the parabolic smoothing properties,
$$
\|\fe^{\varepsilon^2\Delta}u_0\|_{W^{l+1,1}} +\|\fe^{\varepsilon^2\Delta}u_0\|_{W^{l,\infty}}
\lesssim_l \varepsilon^{-l},\qquad l\ge0.
$$
Moreover, we do not use endpoint boundedness of the sharp projector $P_N$. Instead, $P_N\fe^{\varepsilon^2\Delta}$ is the convolution operator with the truncated heat kernel
$$
K_{\varepsilon,N}(x)=\sum_{|n|_\infty\le N}\fe^{-\varepsilon^2|n|^2}\fe^{inx}.
$$
Since $N=\varepsilon^{-1-\eta}$, the omitted heat-kernel tail is exponentially small. More precisely, for every multi-index $\alpha$, every $1\le p\le\infty$, and every fixed $r\ge1$,
$$
\|K_{\varepsilon,N}\|_{L^1(\mathbb{T}^d)} \le C,\qquad \|D^\alpha(I-P_N)\fe^{\varepsilon^2\Delta}f\|_{L^p} \le C_{\alpha,r,d}\varepsilon^{-|\alpha|-d}(N\varepsilon)^{-r}\|f\|_{L^p}.
$$
On the torus, this follows by estimating the $L^1(\mathbb T^d)$-norm of the
periodic tail kernel:
$$
\left\|\sum_{|n|_\infty>N}(\im n)^\alpha \fe^{-\varepsilon^2|n|^2}\fe^{\im nx}\right\|_{L^1(\mathbb T^d)}\lesssim\sum_{|n|_\infty>N}|n|^{|\alpha|}\fe^{-\varepsilon^2|n|^2}\lesssim\varepsilon^{-|\alpha|-d}(N\varepsilon)^{-r}.
$$
Since $N\varepsilon=\varepsilon^{-\eta}$, taking $r$ large enough makes $\varepsilon^{-d}(N\varepsilon)^{-r}$ harmless. For the $W^{l+1,1}$ bound we put one derivative on the initial data. Since $u_0\in\bv$, its distributional gradient is a finite measure, and the convolution estimate uses its total variation:
$$
\|(I-P_N)\fe^{\varepsilon^2\Delta}u_0\|_{W^{l+1,1}} \lesssim_l \varepsilon^{-l-d}(N\varepsilon)^{-r} \bigl(\|u_0\|_{L^1}+\tv(u_0)\bigr) \lesssim_l \varepsilon^{-l}.
$$
For the $W^{l,\infty}$ bound, we use
$$
\|(I-P_N)\fe^{\varepsilon^2\Delta}u_0\|_{W^{l,\infty}} \lesssim_l \varepsilon^{-l-d}(N\varepsilon)^{-r}\|u_0\|_{L^\infty} \lesssim_l \varepsilon^{-l}.
$$
Hence
\begin{align*}
\|P_N\fe^{\varepsilon^2\Delta}u_0\|_{W^{l+1,1}}&\le \|\fe^{\varepsilon^2\Delta}u_0\|_{W^{l+1,1}}
+\|(I-P_N)\fe^{\varepsilon^2\Delta}u_0\|_{W^{l+1,1}} \lesssim_l \varepsilon^{-l},\\
\|P_N\fe^{\varepsilon^2\Delta}u_0\|_{W^{l,\infty}}&\le \|\fe^{\varepsilon^2\Delta}u_0\|_{W^{l,\infty}}+\|(I-P_N)\fe^{\varepsilon^2\Delta}u_0\|_{W^{l,\infty}} \lesssim_l \varepsilon^{-l}.
\end{align*}
Hence, for the first slab,
$$
\widetilde v_0^\varepsilon(T_0)=P_N\fe^{\varepsilon^2\Delta}u_0
$$
satisfies the assumptions of Proposition~\ref{proputxn}. No endpoint range condition is needed for the numerical minimizer, because the nonlinearity in the discrete residual has been replaced by the globally smooth cut-off flux $\widetilde f$. We now close an induction over the time slabs with fixed constants. For each $l\ge0$, let $A_l$ be a constant independent of $i,N,k,\tau,\varepsilon$, chosen below to include the first-slab bound. At the beginning of slab $I_i$, the induction statement is
$$
\|\widetilde v_i^\varepsilon(T_i)\|_{W^{l+1,1}}+\|\widetilde v_i^\varepsilon(T_i)\|_{W^{l,\infty}}
\le A_l\varepsilon^{-l}, \qquad l\ge0,
$$
which is exactly the regularity assumption needed to apply Proposition~\ref{proputxn} on that slab. Once this is known on slab $I_i$, Lemma~\ref{lem:local-residual-opti} and Proposition~\ref{prop:local-error-tilde} give the local error estimate, and the propagation argument gives \eqref{eq:uN-v-simple} up to the end of slab $I_i$. It remains to show that this $L^1$-error estimate implies the same regularity bound at the beginning of slab $I_{i+1}$.

Thus assume that \eqref{eq:uN-v-simple} has been proved up to the end of slab $I_{i-1}$. We prove the above bound for $\widetilde v_i^\varepsilon(T_i)$. By the $L^1$-contraction of the heat semigroup (see also Lemma~\ref{lemcontra}), \eqref{eq:v-split-cont}, and the standard heat-kernel estimates,
\begin{align*}
\|\widetilde v_i^\varepsilon(T_i)\|_{W^{l+1,1}}&= \|\fe^{\varepsilon^2\Delta}u_{N,i-1}^\varepsilon(T_i)\|_{W^{l+1,1}}\\
&\le\|\fe^{\varepsilon^2\Delta}v_{i-1}^\varepsilon(T_i)\|_{W^{l+1,1}}+\|\fe^{\varepsilon^2\Delta}(u_{N,i-1}^\varepsilon(T_i)-v_{i-1}^\varepsilon(T_i))\|_{W^{l+1,1}}\\
&\lesssim_l\varepsilon^{-l}+\varepsilon^{-l-1}\|u_{N,i-1}^\varepsilon(T_i)-v_{i-1}^\varepsilon(T_i)\|_{L^1},
\end{align*}
and similarly
\begin{align*}
\|\widetilde v_i^\varepsilon(T_i)\|_{W^{l,\infty}}&\le\|\fe^{\varepsilon^2\Delta}v_{i-1}^\varepsilon(T_i)\|_{W^{l,\infty}}+\|\fe^{\varepsilon^2\Delta}(u_{N,i-1}^\varepsilon(T_i)-v_{i-1}^\varepsilon(T_i))\|_{W^{l,\infty}}\\
&\lesssim_l\varepsilon^{-l}+\varepsilon^{-l-d}\|u_{N,i-1}^\varepsilon(T_i)-v_{i-1}^\varepsilon(T_i)\|_{L^1}.
\end{align*}
Here the terms involving $v_{i-1}^\varepsilon$ are bounded by $C_l\varepsilon^{-l}$, since the exact splitting flow has uniformly bounded $W^{1,1}$ and $L^\infty$ norms, and the heat kernel supplies the remaining $l$ derivatives. By \eqref{eq:uN-v-simple},
$$
\|u_{N,i-1}^\varepsilon(T_i)-v_{i-1}^\varepsilon(T_i)\|_{L^1}\lesssim\varepsilon^{-m-3\delta}\tau^m \ell_kk^{-m}+\varepsilon^{-m-3\delta}N^{-m}.
$$
For this regularity bootstrap, fix $\eta>0$ and $\vartheta>0$, choose the integer $m$ so that $m\eta>d$, and then choose $\delta>0$ sufficiently small that
\begin{equation}\label{eq:bootstrap-parameter-conditions}
3\delta<\min\{m\eta-d,\ d\eta+(1+\eta)\vartheta\}.
\end{equation}
Take
$$
N=\varepsilon^{-1-\eta},\qquad \tau\sim\varepsilon,\qquad k\ge m+1,\qquad \ell_kk^{-m}\le N^{-d-\vartheta}.
$$
The two contributions to the global error satisfy
\begin{align*}
\varepsilon^{-m-3\delta}\tau^m \ell_kk^{-m}&\lesssim\varepsilon^{-3\delta}N^{-d-\vartheta}
=\varepsilon^{(1+\eta)(d+\vartheta)-3\delta},\\
\varepsilon^{-m-3\delta}N^{-m}&=\varepsilon^{m\eta-3\delta}.
\end{align*}
Both exponents are strictly larger than $d$ by \eqref{eq:bootstrap-parameter-conditions}. Thus, with
$$
\rho_\varepsilon:=
\varepsilon^{-m-3\delta}\tau^m \ell_kk^{-m}+\varepsilon^{-m-3\delta}N^{-m},
$$
we have $\rho_\varepsilon=o(\varepsilon^d)$.

To make the constants in this argument uniform, the heat-kernel bounds above provide fixed $B_l,D_l$ such that, for $i\ge1$,
$$
\varepsilon^l\left(\|\widetilde v_i^\varepsilon(T_i)\|_{W^{l+1,1}}+\|\widetilde v_i^\varepsilon(T_i)\|_{W^{l,\infty}}\right)\le B_l+D_l(\varepsilon^{-1}+\varepsilon^{-d})E_i.
$$
Choose $A_l\ge B_l+2D_l$, also large enough for the first slab, before starting the induction. With these bounds fixed, the local estimate has a fixed constant $C_{\rm loc}$; for the fixed derivative order $m$, it uses only finitely many of the $A_l$. The accumulated estimate is then
$$
E_i\le \fe^T\bigl(E_0+T C_{\rm loc}\rho_\varepsilon\bigr) \le C_*\rho_\varepsilon
$$
on every initial sequence of slabs on which the induction hypothesis holds. Here $E_0\lesssim N^{-m}\varepsilon^{-m}\le\rho_\varepsilon$, and $C_*$ is independent of the number of completed slabs and of the discretization parameters. Since $\rho_\varepsilon/\varepsilon^d\to0$, choose a single $\varepsilon_0\le1$ such that $C_*\rho_\varepsilon\le\varepsilon^d$ for $0<\varepsilon\le\varepsilon_0$. Then, using $d\ge1$,
$$
B_l+D_l(\varepsilon^{-1}+\varepsilon^{-d})E_i \le B_l+D_l(\varepsilon^{d-1}+1)\le B_l+2D_l\le A_l.
$$
Thus the next initial value satisfies the same fixed regularity bounds. This closes the induction and justifies the uniform constant in \eqref{eq:uN-v-simple}.

\begin{proof}[Proof of Theorem~\ref{thm}]
Combining \eqref{eq:v-u-error} and \eqref{eq:uN-v-simple}, we obtain
\begin{equation}\label{eq:final-error-before-choice}
\|u_N^\varepsilon(t)-u(t)\|_{L^1}\le C\varepsilon^{1/2} +C\varepsilon^{-m-3\delta}\tau^m \ell_kk^{-m}+C\varepsilon^{-m-3\delta}N^{-m}, \qquad 0\le t\le T .
\end{equation}
Given any fixed $\gamma>0$, first choose $\eta>0$ sufficiently small and set $N=\varepsilon^{-1-\eta}$, so that
$$
\frac1{2(1+\eta)}\ge \frac12-\gamma.
$$
Then $\varepsilon^{1/2}\le N^{-1/2+\gamma}$. We also take $\tau\sim\varepsilon$. The spatial term becomes
$$
\varepsilon^{-m-3\delta}N^{-m} =\varepsilon^{m\eta-3\delta} =N^{-\frac{m\eta-3\delta}{1+\eta}} .
$$
Fix $\vartheta>0$, choose $m$ so that $m\eta>d$, and choose $\delta>0$ satisfying \eqref{eq:bootstrap-parameter-conditions}. These choices ensure both the regularity bootstrap and the final error rate: indeed,
$$
\frac{m\eta-3\delta}{1+\eta}>\frac{d}{1+\eta} \ge\frac1{2(1+\eta)}\ge\frac12-\gamma.
$$
The spatial term is then bounded by $C_\gamma N^{-1/2+\gamma}$, where the dependence of intermediate constants on this fixed $\delta$ is absorbed into $C_\gamma$. The temporal interpolation term is
$$
\varepsilon^{-m-3\delta}\tau^m \ell_kk^{-m}\lesssim \varepsilon^{-3\delta}\ell_kk^{-m},
$$
Choose $k\ge m+1$ so that $\ell_kk^{-m}\le N^{-d-\vartheta}$. Then
$$
\varepsilon^{-3\delta}\ell_kk^{-m} \le N^{-d-\vartheta+3\delta/(1+\eta)},\qquad
d+\vartheta-\frac{3\delta}{1+\eta}>\frac{d}{1+\eta}\ge\frac12-\gamma,
$$
again by \eqref{eq:bootstrap-parameter-conditions}. Hence the temporal term is bounded by $C_\gamma N^{-1/2+\gamma}$ as well. Consequently,
\begin{align*}
\|u_N^\varepsilon(t)-u(t)\|_{L^1}\le C\varepsilon^{1/2}+C_\gamma N^{-1/2+\gamma}\le C_\gamma N^{-1/2+\gamma}, \qquad 0\le t\le T .
\end{align*}
This proves Theorem~\ref{thm}.
\end{proof}

\section{Numerical experiments}\label{sectionnumexp}

In this section, we provide numerical illustrations for the proposed VVMM Fourier method. The computations are a practical realization of the residual-minimizing method analyzed above: the continuous residual minimization is approximated by quadrature, a smoothed $L^1$-type residual, and an iterative optimization procedure. The experiments report convergence behavior, CPU/GPU time, and representative shock profiles in one and two spatial dimensions.

On each time interval, the temporal variable is discretized by interpolation at Chebyshev nodes defined in \eqref{cheby}.

Since global optimality cannot in general be guaranteed, we find that the initialization of the unknown values at these nodes also plays a significant role in practice. Here, we employ a fourth-order Taylor expansion based on the first four temporal derivatives obtained recursively from \eqref{vis} as the initial guess. 

The solution is computed by minimizing a weighted residual functional. More precisely, at each Chebyshev node, the PDE residual is measured by the smoothed $L^1$-type quantity
\begin{equation}\label{numres}
\frac{R(u)^2}{\sqrt{\delta_{\rm num}^2+R(u)^2}}, \qquad R(u)=u_t+\nabla\cdot f(u)-\varepsilon\Delta u
\end{equation}
and the total objective is defined as the corresponding quadrature-weighted average over all interpolation nodes. The gradient of this objective with respect to the optimization variables is obtained by a variational calculation, and spatial derivatives are evaluated spectrally by FFT. Moreover, a penalty term 
$$
\phi(u)=\max\{0,u_--u,u-u_+\},
$$
is also included at each Chebyshev node to discourage violations of the maximum principle bounds, where $u_-$ and $u_+$ are real numbers defined by the initial data (see also~\eqref{u-u+}).

The optimization is performed by gradient descent with an adaptive step size and a one-dimensional line search. Iterations are terminated when the reduction of the objective produced by the line search becomes negligible (up to $\mathcal{O}(\delta_{\rm num})$).

Overall, the numerical results are consistent with the theoretical $L^1$ convergence behavior and illustrate stable approximation of both rarefaction waves and shock waves. No pronounced Gibbs-type oscillations are observed in the reported tests.

\subsection{One-dimensional case}

\begin{figure}
\begin{center}
\subfigure[]{\includegraphics[width=0.48\textwidth]{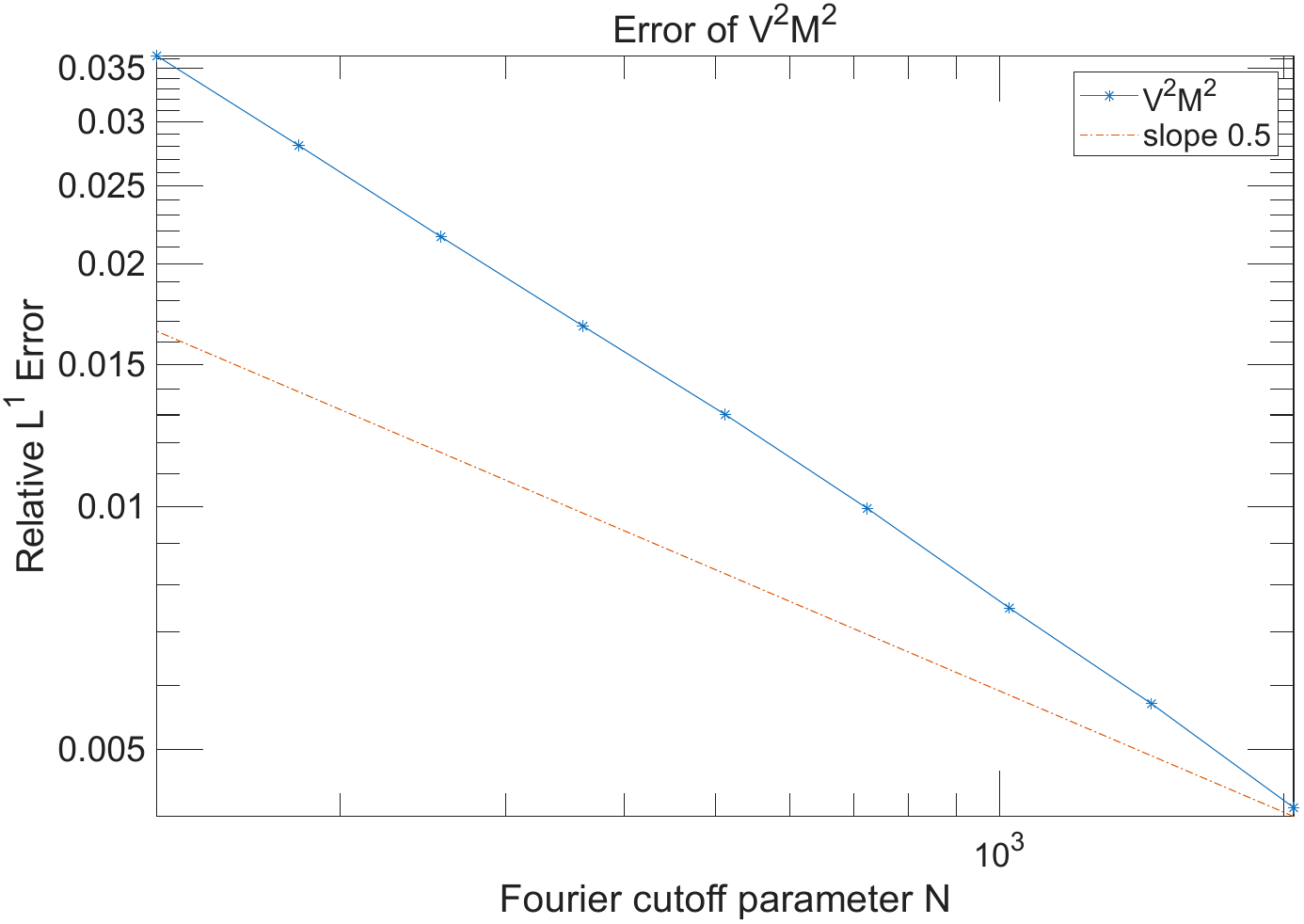}}
\subfigure[]{\includegraphics[width=0.48\textwidth]{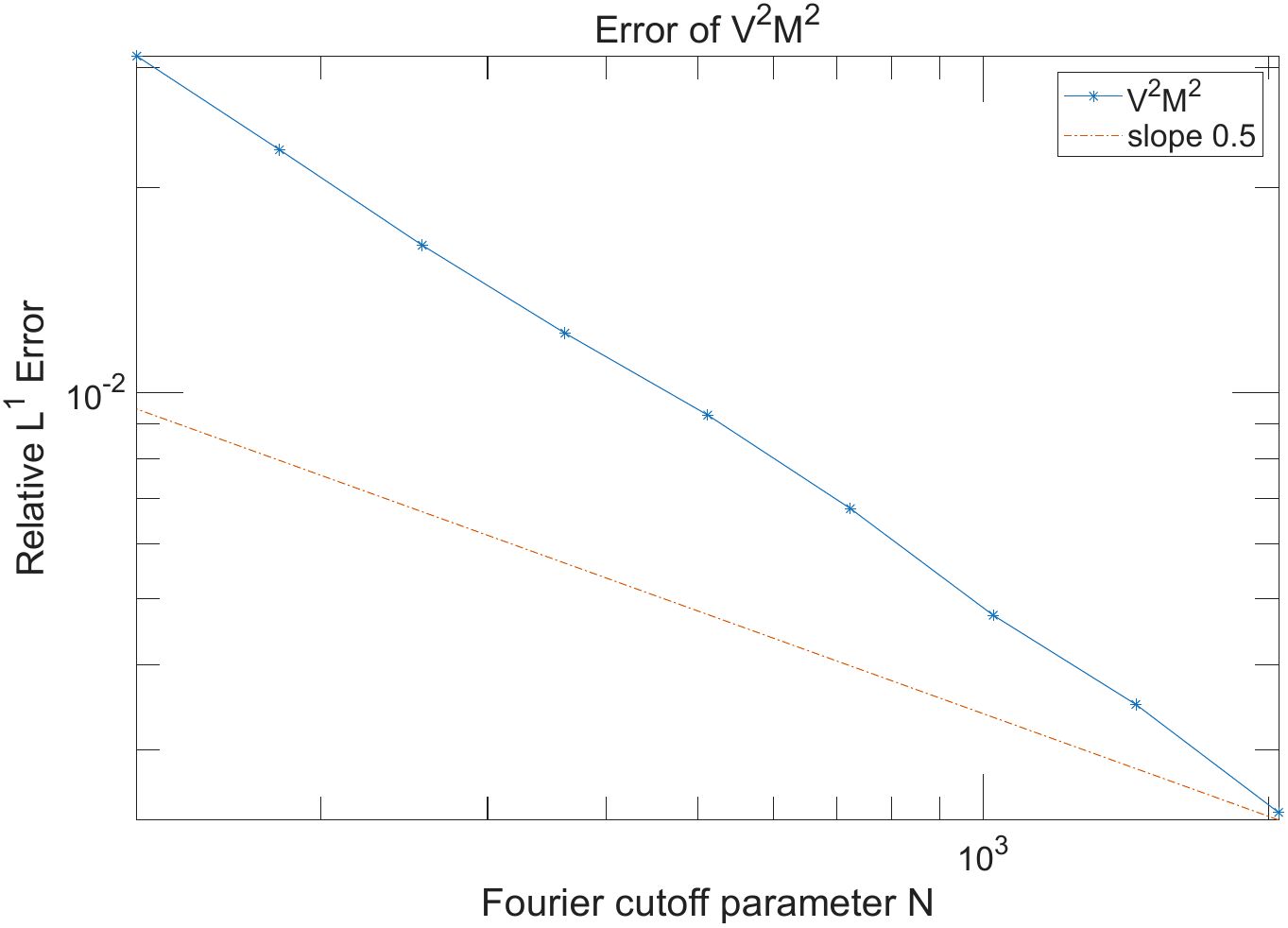}}
\subfigure[]{\includegraphics[width=0.48\textwidth]{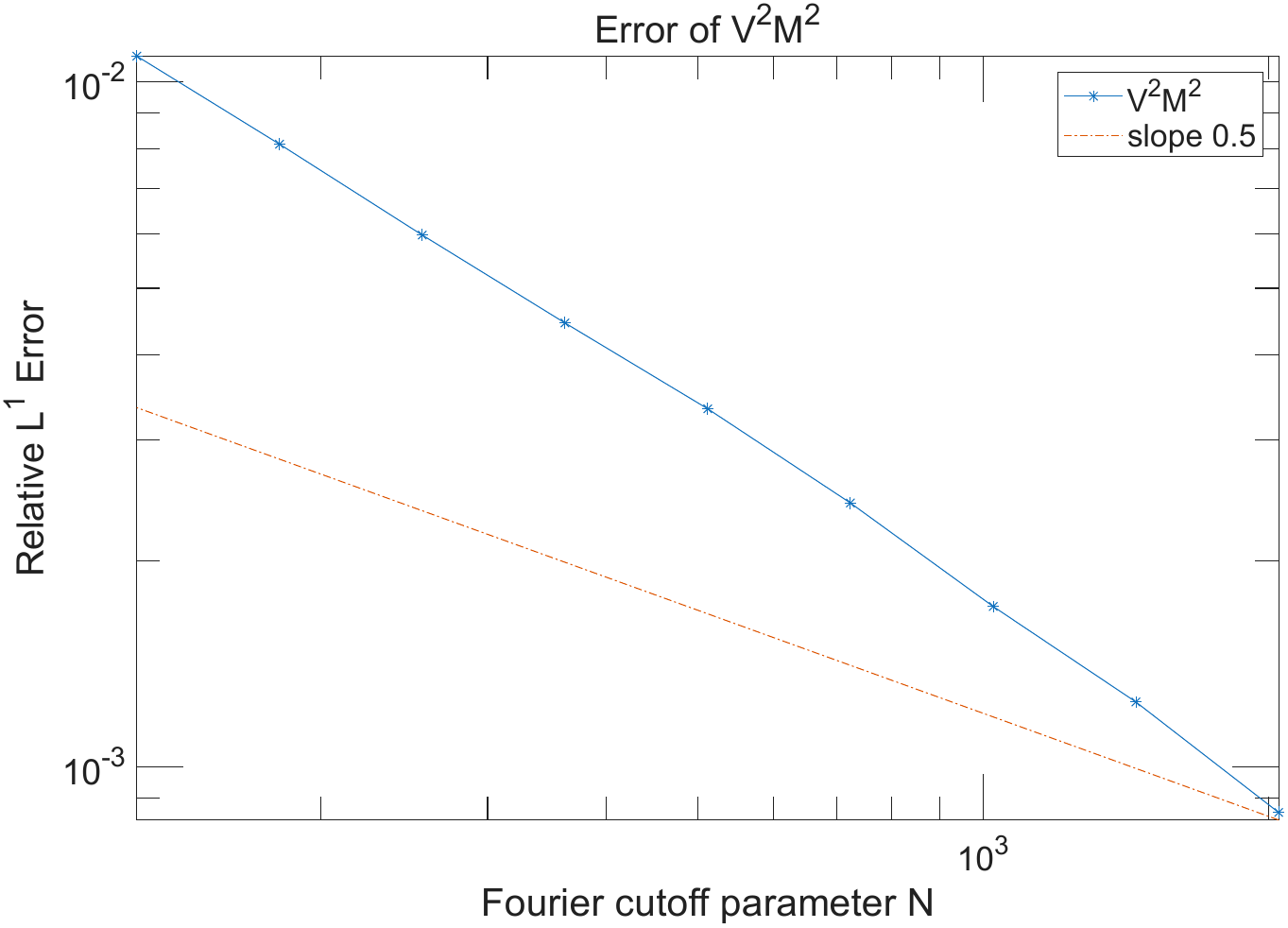}}
\subfigure[]{\includegraphics[width=0.48\textwidth]{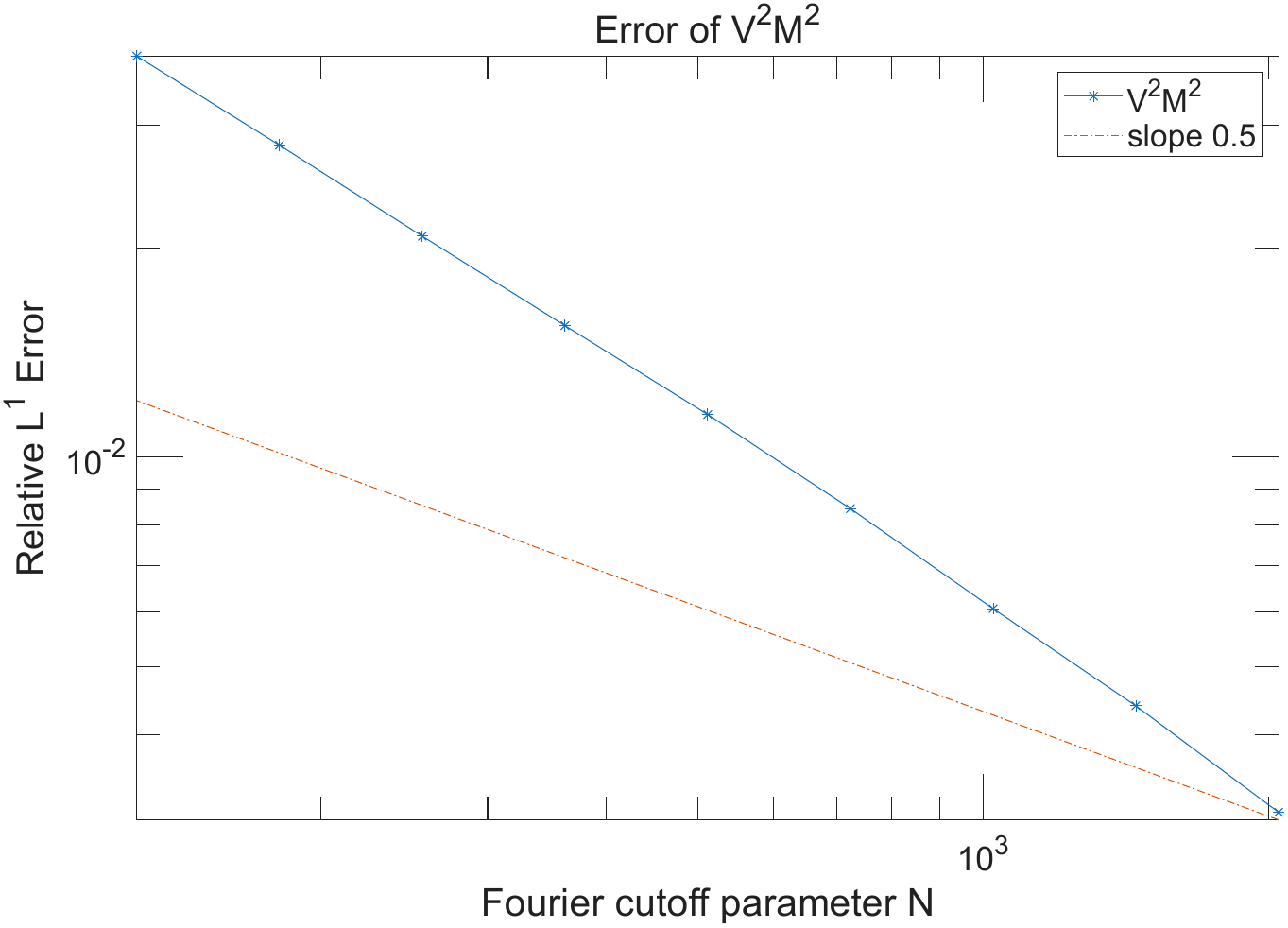}}
\end{center}
\caption{The $L^1$ global error of the one-dimensional conservation laws against the Fourier cutoff parameter $N$ for different fluxes and initial data. (a): example \ref{eg1}; (b): example \ref{eg2}; (c): example \ref{eg3}; (d): example \ref{eg4}.\label{fig1}}
\end{figure}

\begin{figure}
\begin{center}
\subfigure[]{\includegraphics[width=0.48\textwidth]{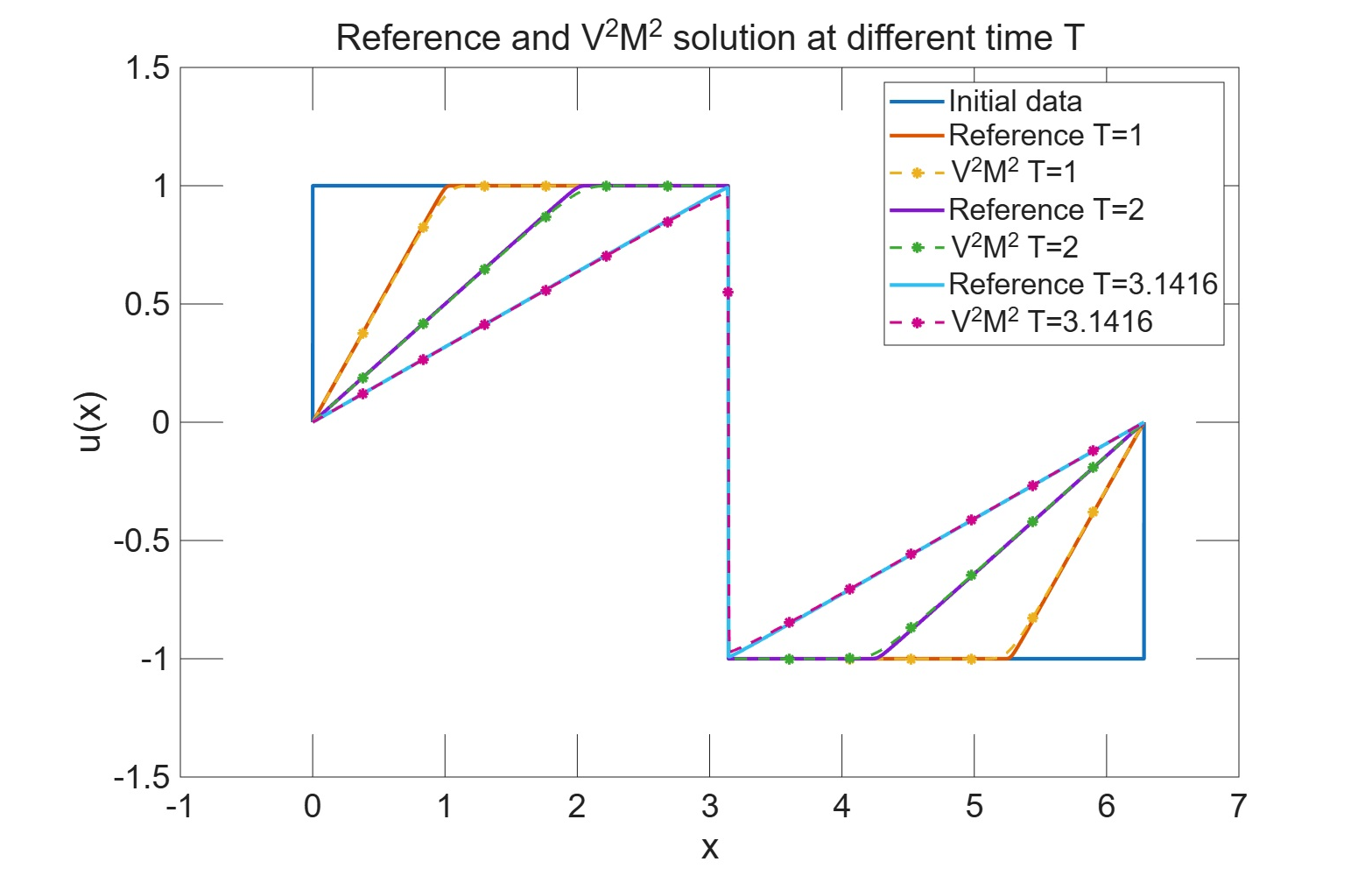}}
\subfigure[]{\includegraphics[width=0.48\textwidth]{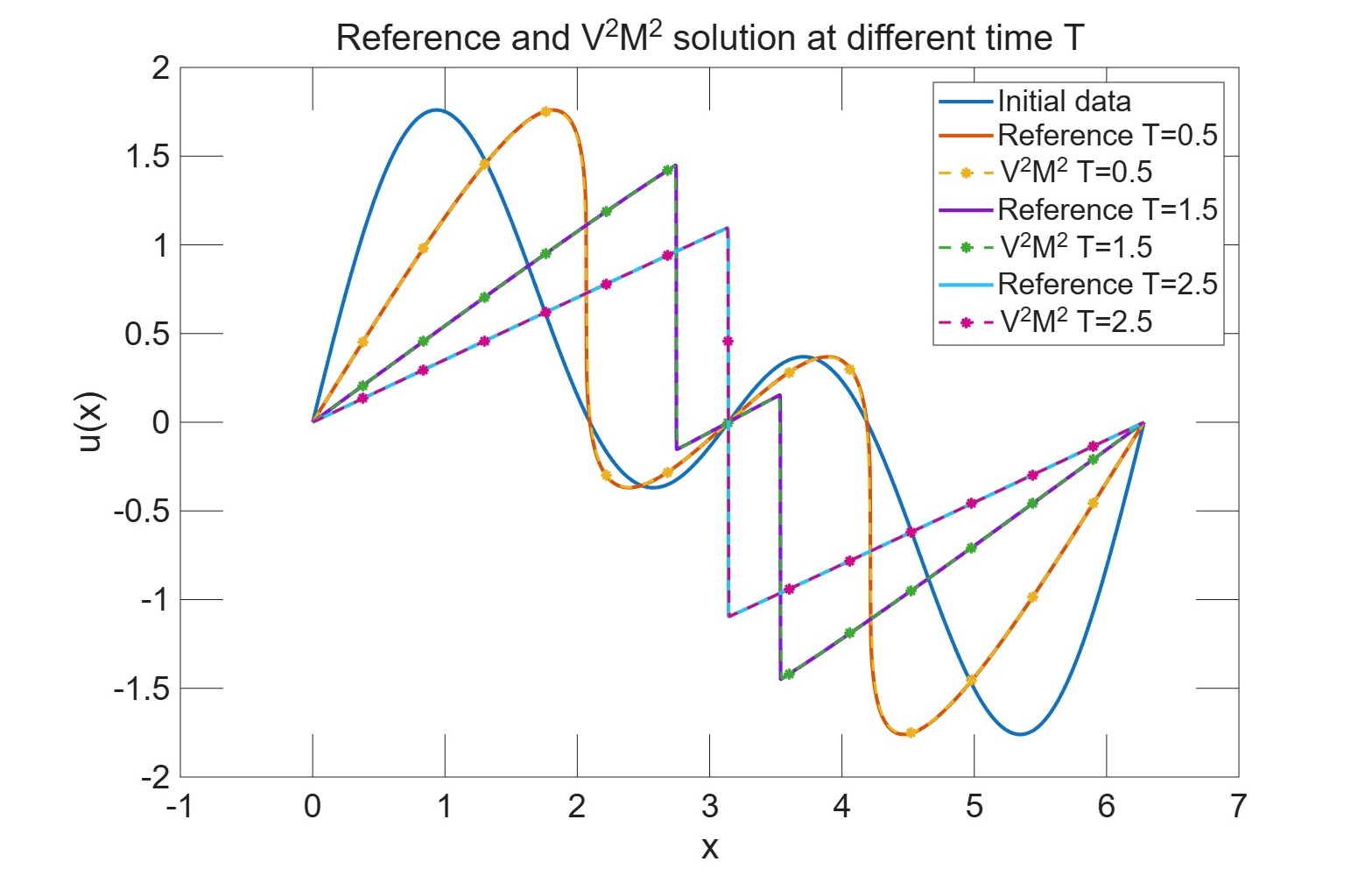}}
\subfigure[]{\includegraphics[width=0.48\textwidth]{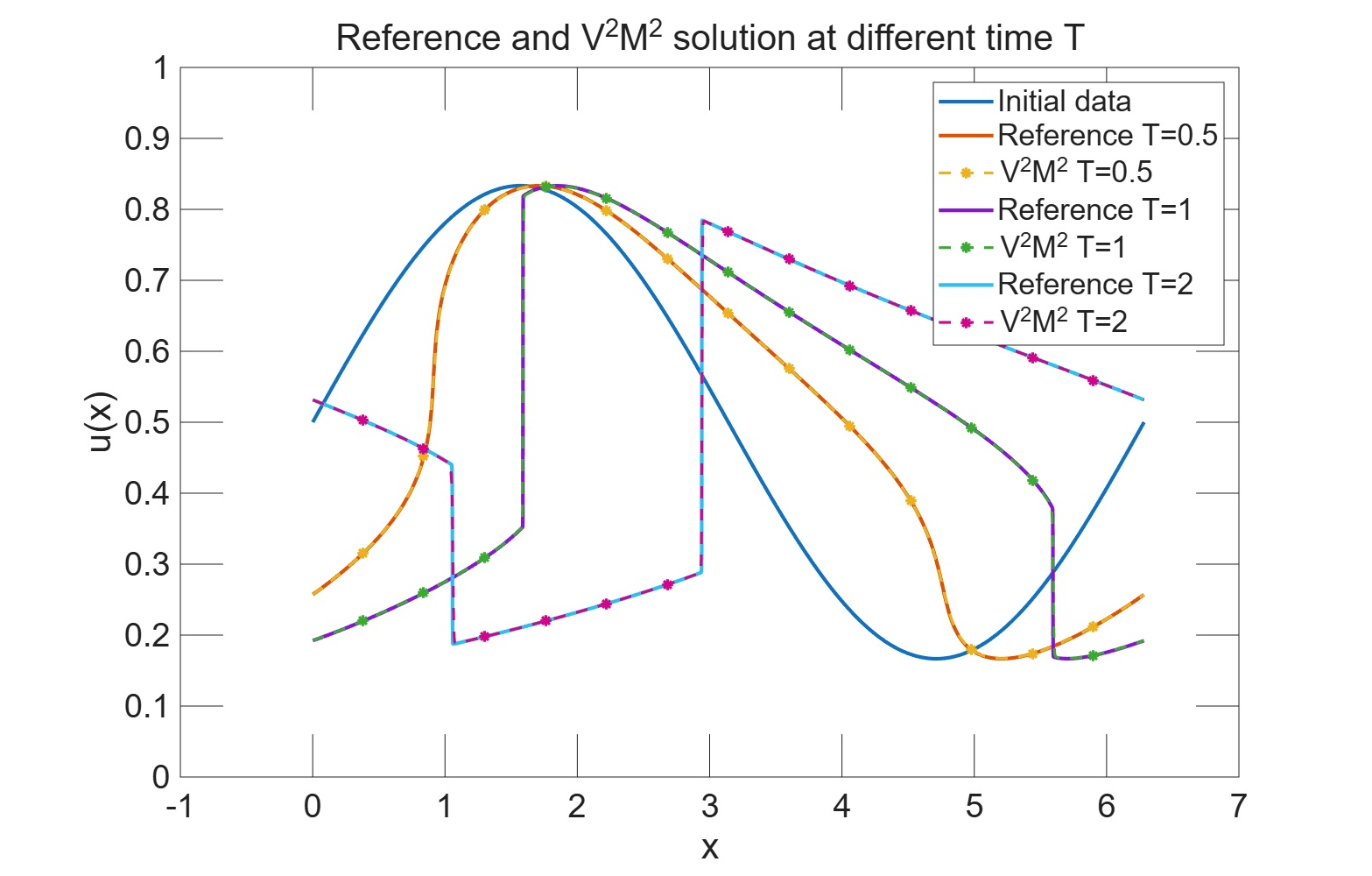}}
\subfigure[]{\includegraphics[width=0.48\textwidth]{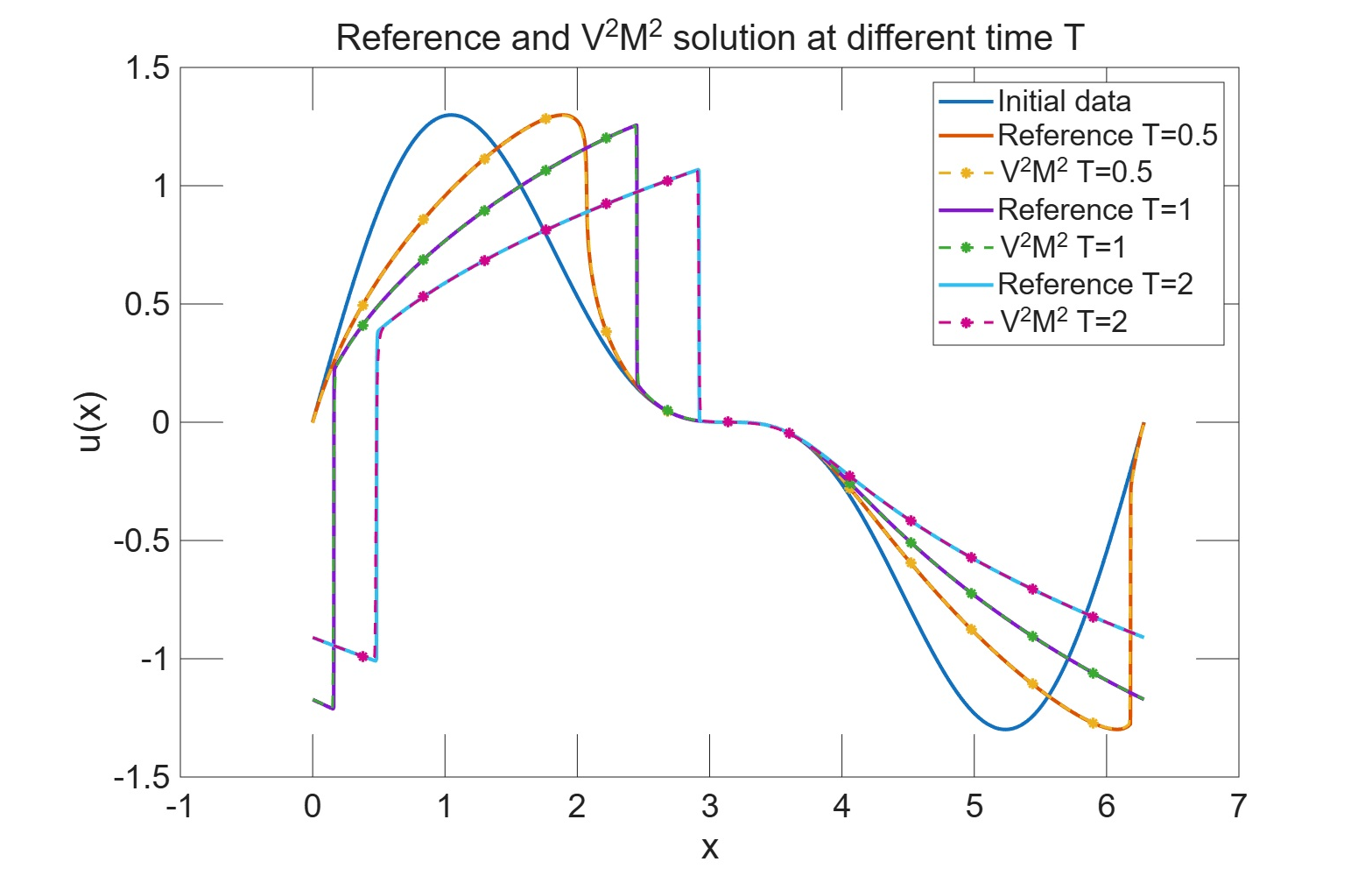}}
\end{center}
\caption{Comparison plots of the one-dimensional conservation laws between the reference solution and the numerical solution with $N=2^{11}$ for different fluxes and initial data. (a): example \ref{eg1}; (b): example \ref{eg2}; (c): example \ref{eg3}; (d): example \ref{eg4}.\label{fig2}}
\end{figure}

We take several different fluxes with different initial data as examples. For all these four cases, we take Chebyshev order $k=7$, and choose the Fourier cutoff parameter $N$ between $2^7$ and $2^{11}$ for test. For the reference solution, we employ the global Lax-Friedrichs scheme on a much finer spatial grid corresponding to the resolution parameter  $N=2^{13}$ (i.e., $2^{14}$ spatial grids)  and CFL number $c=0.4$. We set $\tau=\varepsilon=(2N)^{-0.85}$.  The parameter $\delta_{\rm num}$ given in \eqref{numres} is set to be $(2N)^{-1.5}$, and the optimization tolerance is $5\delta_{\rm num}$.

The proposed fluxes and initial data are listed as follows:
\begin{enumerate}[label=(\roman*)]
\item Flux $f(u)=u^2/2$ and initial data $u_0=\sgn(\sin x)$.\label{eg1}
\item Flux $f(u)=u^2/2$ and initial data $u_0=\sin x+\sin2x$.\label{eg2}
\item Flux $f(u)=\frac{u^2}{u^2+0.5(1-u)^2}$ and initial data $u_0=\frac13\sin x+\frac12$.\label{eg3}
\item Flux $f(u)=u^3/3$ and initial data $u_0=\sin x+\frac12\sin2x$.\label{eg4}
\end{enumerate}

For the Burgers' example \ref{eg1}, on the torus $[0,2\pi)$, there is supposed to be a rarefaction wave generated at the border $x=0$ and a shock wave at $x=\pi$. At time $T=\pi$, the rarefaction wave and the shock wave meet. We measured the relative $L^1$ global error (i.e., the global error divided by the $L^1$ norm of the reference solution) at $T=1$ against the Fourier cutoff parameter $N$ and compared our VVMM numerical solution of $N=2^{11}$ at time $T=1,2,\pi$ with the reference solution, see Figure~\ref{fig1}(a) and Figure~\ref{fig2}(a), respectively.

We also tested Burgers' equation with a smooth initial data $u_0=\sin x+\sin2x$, which is denoted as the example \ref{eg2}. In this case, two shock waves are generated at the same time point $T=\frac{16}{33}$. After that, the two shocks move towards each other and meet at $x=\pi$ and $t\approx 2.1$. We measured the $L^1$ global error at $T=1$ against the Fourier cutoff parameter $N$ and compared our VVMM numerical solution of $N=2^{11}$ at time $T=0.5,1.5,2.5$ with the reference solution, see Figure~\ref{fig1}(b) and Figure~\ref{fig2}(b), respectively.

We further tested two different nonconvex fluxes, denoted as examples \ref{eg3} and \ref{eg4}, respectively, where compound waves may occur, namely combinations of rarefaction waves and shock waves. Such waves do not occur for the convex Burgers flux. The plots of global error against the Fourier cutoff parameter $N$ for these two examples are given in Figure~\ref{fig1}(c) and (d), respectively, while the comparisons between our VVMM numerical solution with $N=2^{11}$ at times $T=0.5,1,2$ and the reference solution are given in Figure~\ref{fig2}(c) and (d), respectively.

Figures~\ref{fig1} and \ref{fig2} show that the practical implementation approximates the reference entropy solution well for different fluxes and initial data. The observed convergence rates are consistent with the theoretical almost one-half order rate and are slightly higher in these tests.

We further tested the CPU time for different $N$. The computational costs of all four examples are similar. As an example, we report the CPU time against the spatial resolution and time step size for example \ref{eg1} at time $T=1$, see Figure~\ref{figtime}(a). The one-dimensional numerical experiments were run on a laptop equipped with an Intel Core i7-14700 CPU.

\begin{figure}[h]
\begin{center}
\subfigure[]{\includegraphics[width=0.48\textwidth]{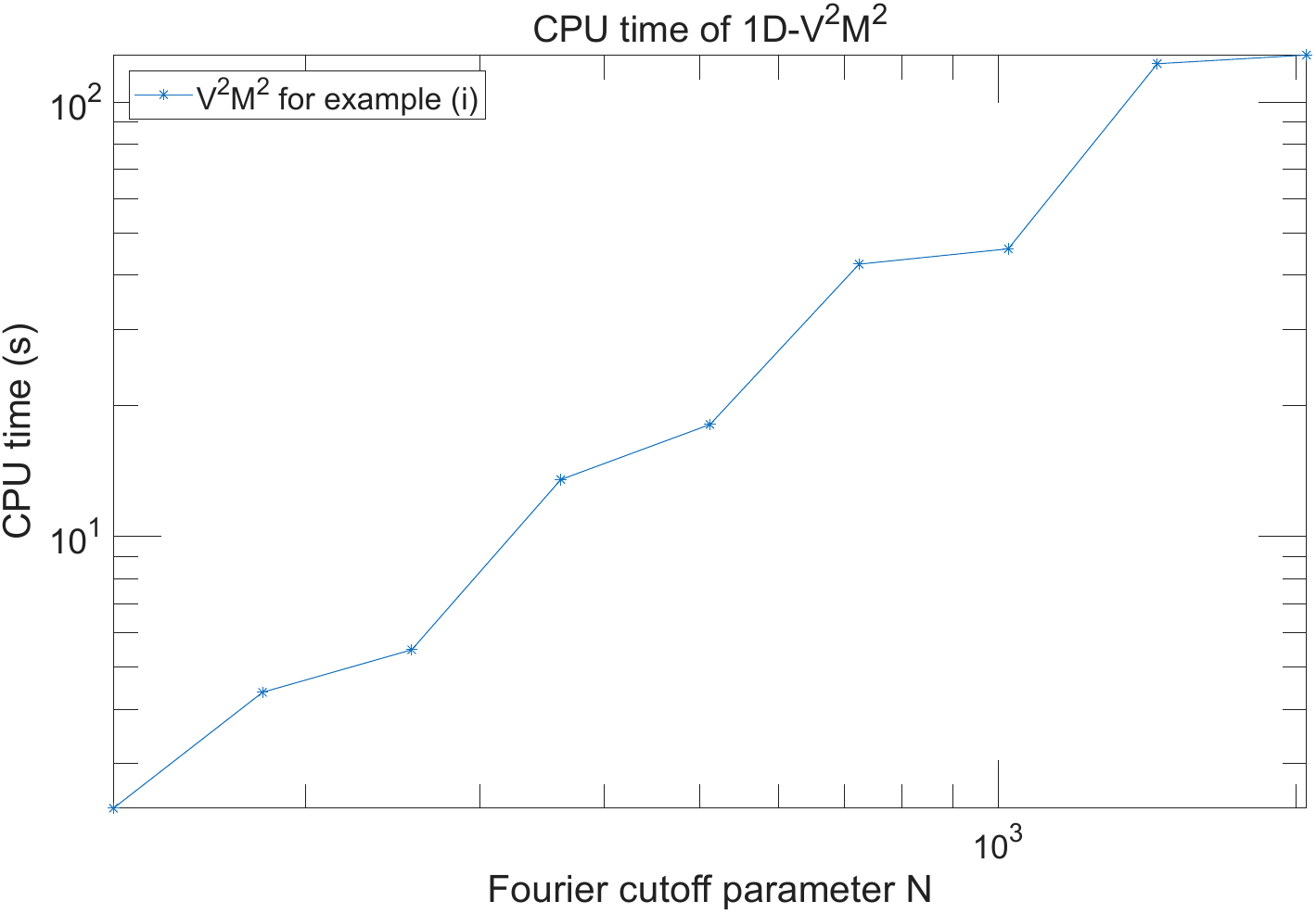}}
\subfigure[]{\includegraphics[width=0.48\textwidth]{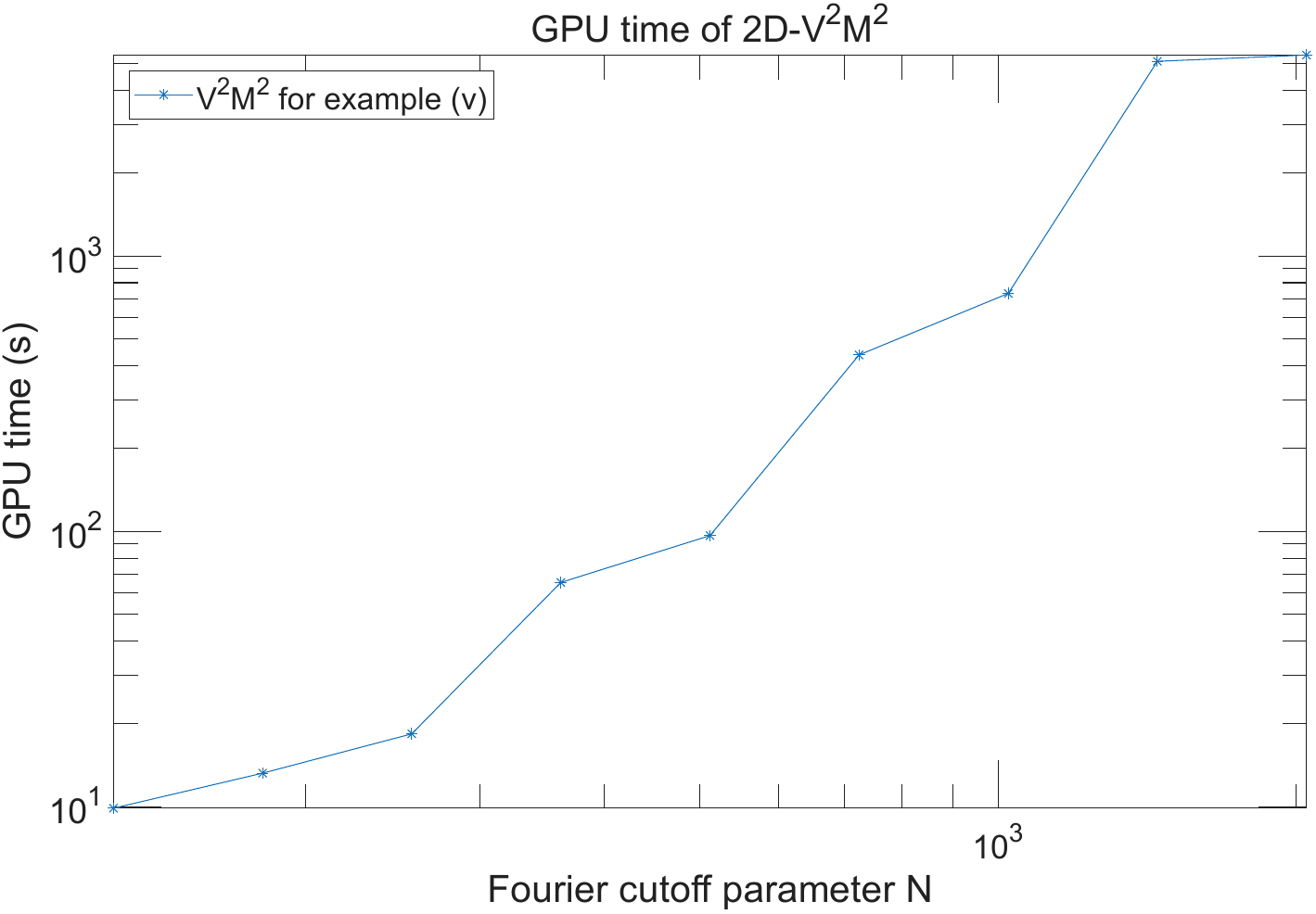}}
\end{center}
\caption{CPU/GPU time of our VVMM method (see Section~\ref{sectionmethod}) for one- and two-dimensional conservation laws with final time $T=1$. (a): CPU time for the one-dimensional example~\ref{eg1}; (b) GPU time for the two-dimensional example~\ref{eg5}.\label{figtime}}
\end{figure}

\subsection{Two-dimensional case}

\begin{figure}
\begin{center}
\subfigure[]{\includegraphics[width=0.48\textwidth]{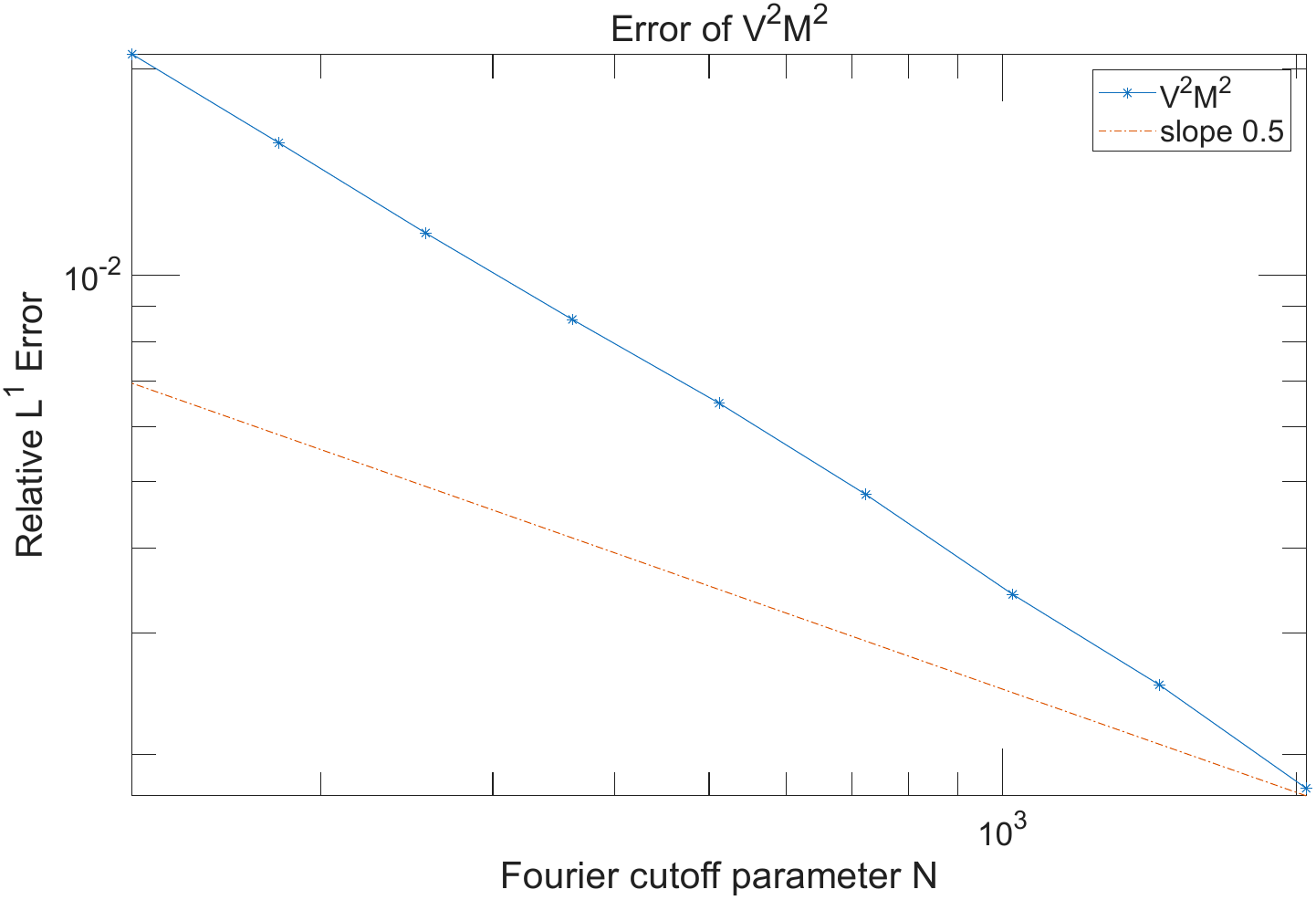}}
\subfigure[]{\includegraphics[width=0.48\textwidth]{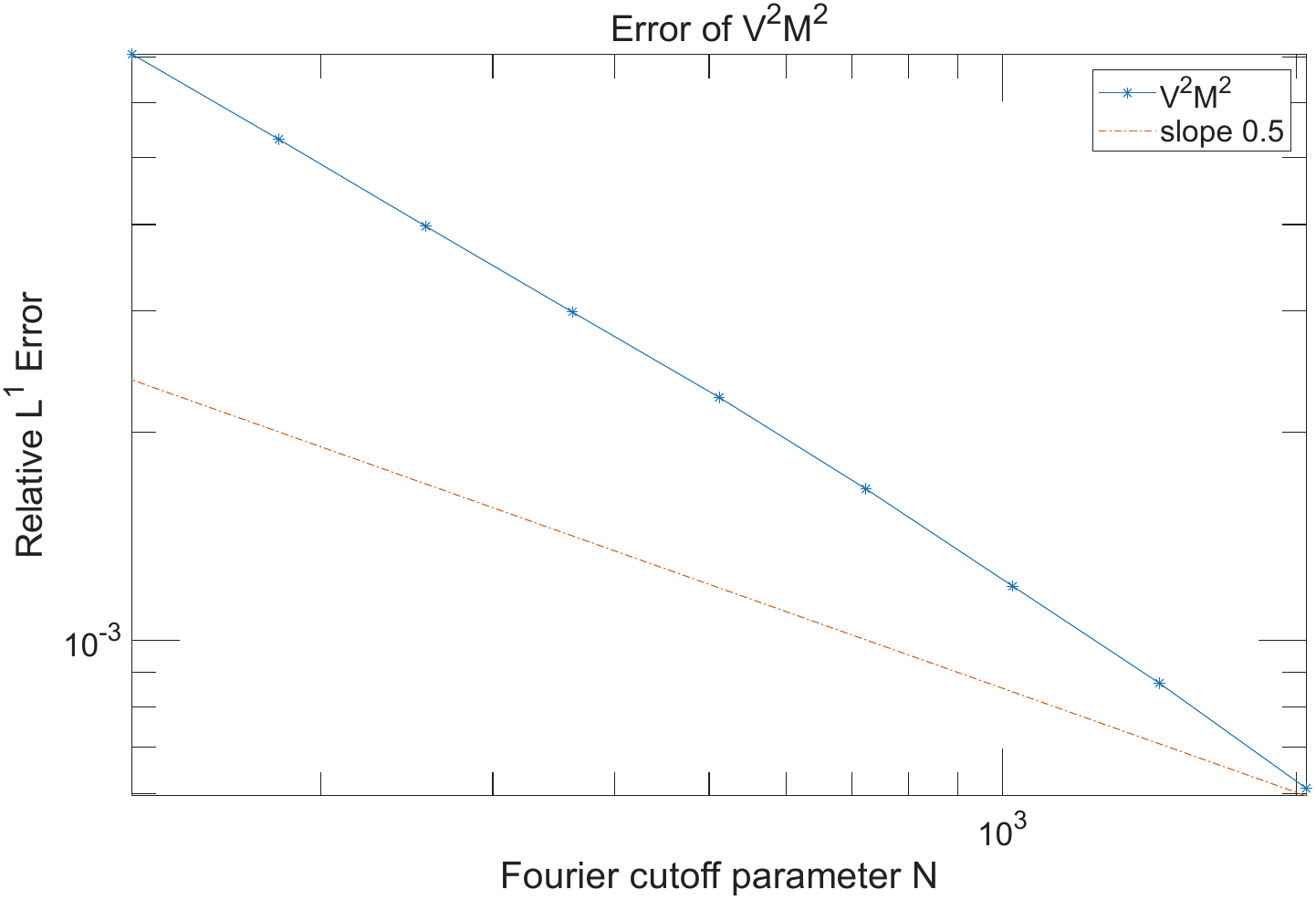}}
\end{center}
\caption{The $L^1$ global error of the two-dimensional conservation laws against the Fourier cutoff parameter $N$ for different fluxes and initial data. (a): example \ref{eg5}; (b): example \ref{eg6}.\label{fig3}}
\end{figure}

\begin{figure}
\begin{center}
\subfigure[]{\includegraphics[width=0.48\textwidth]{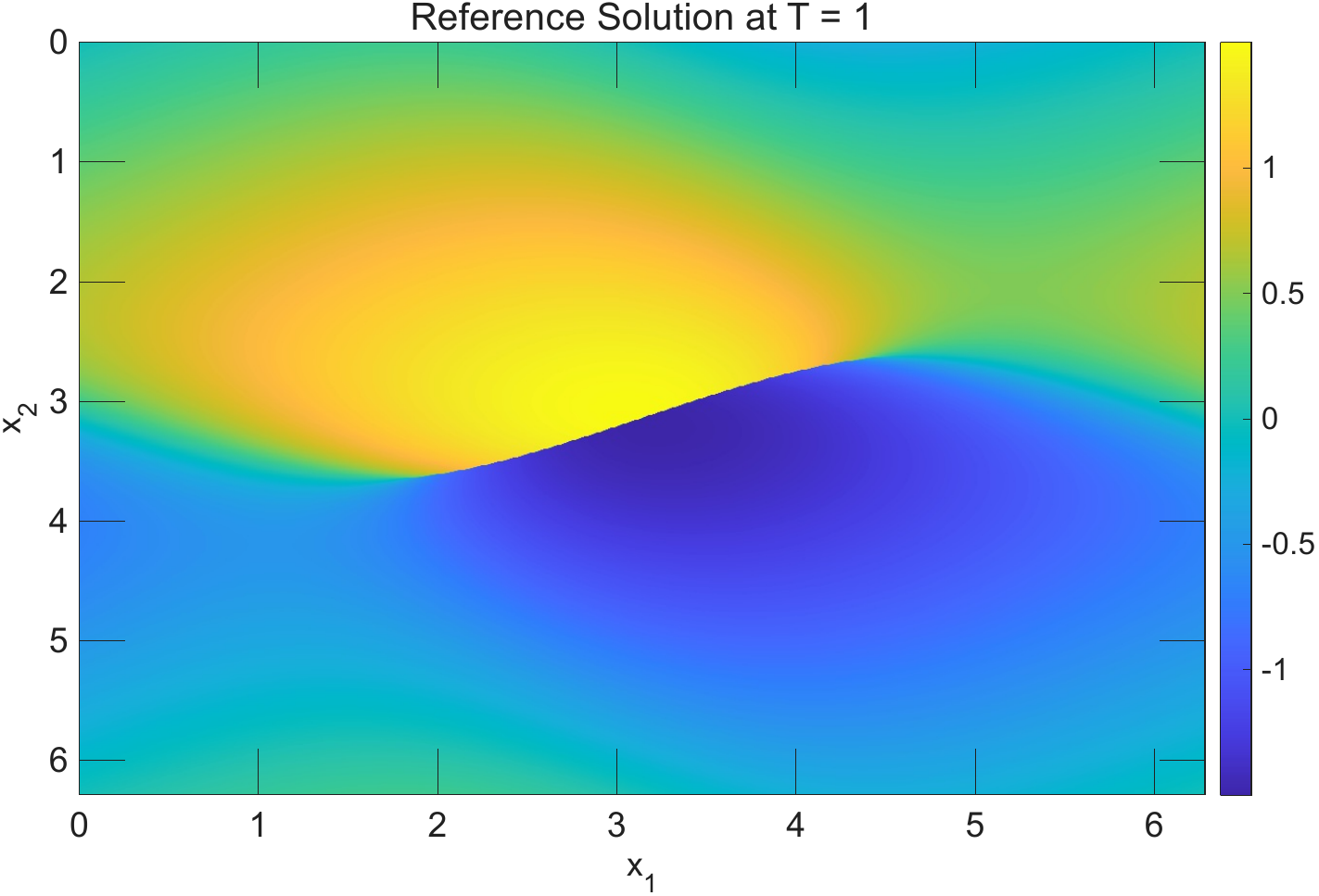}}
\subfigure[]{\includegraphics[width=0.48\textwidth]{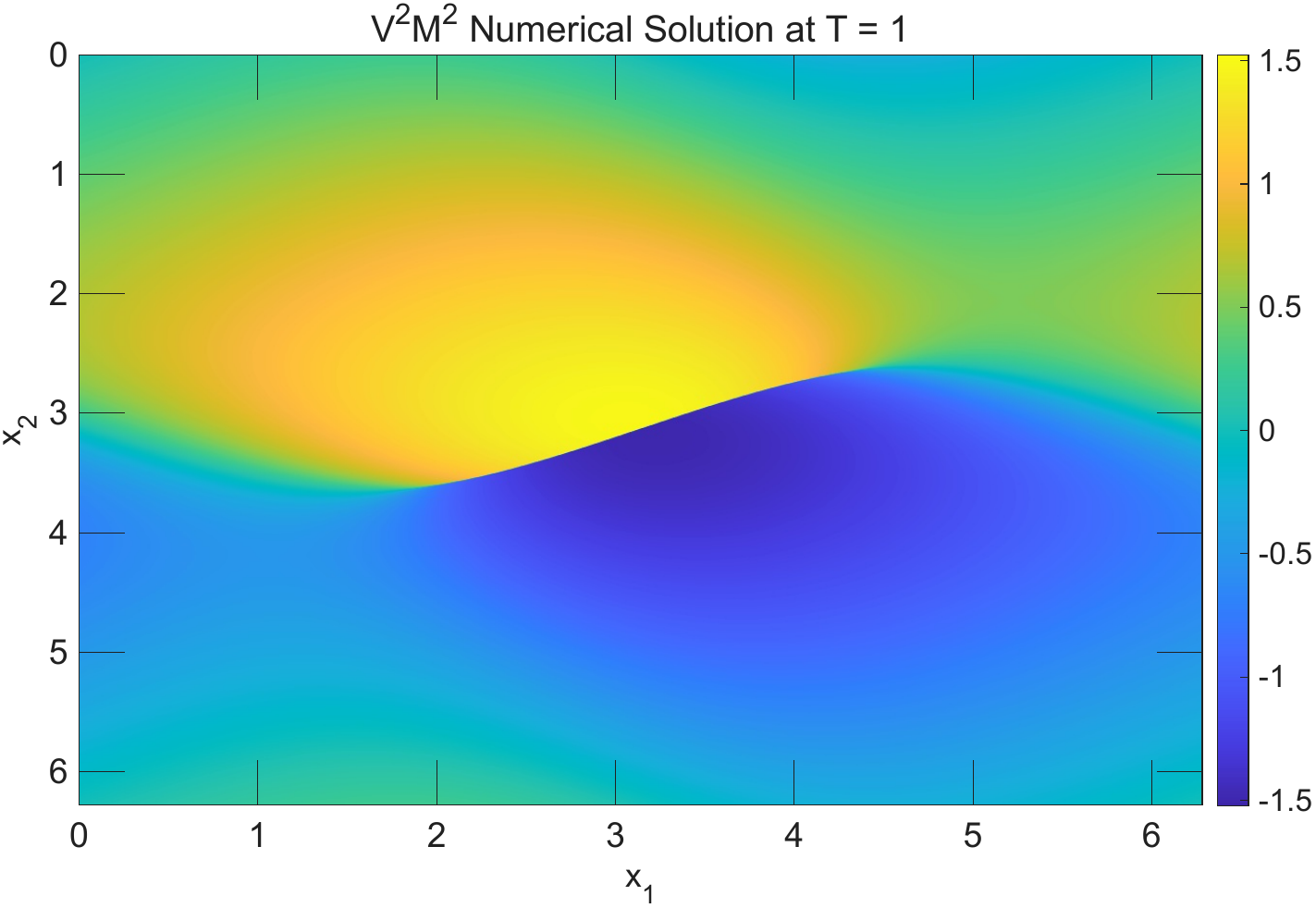}}
\subfigure[]{\includegraphics[width=0.48\textwidth]{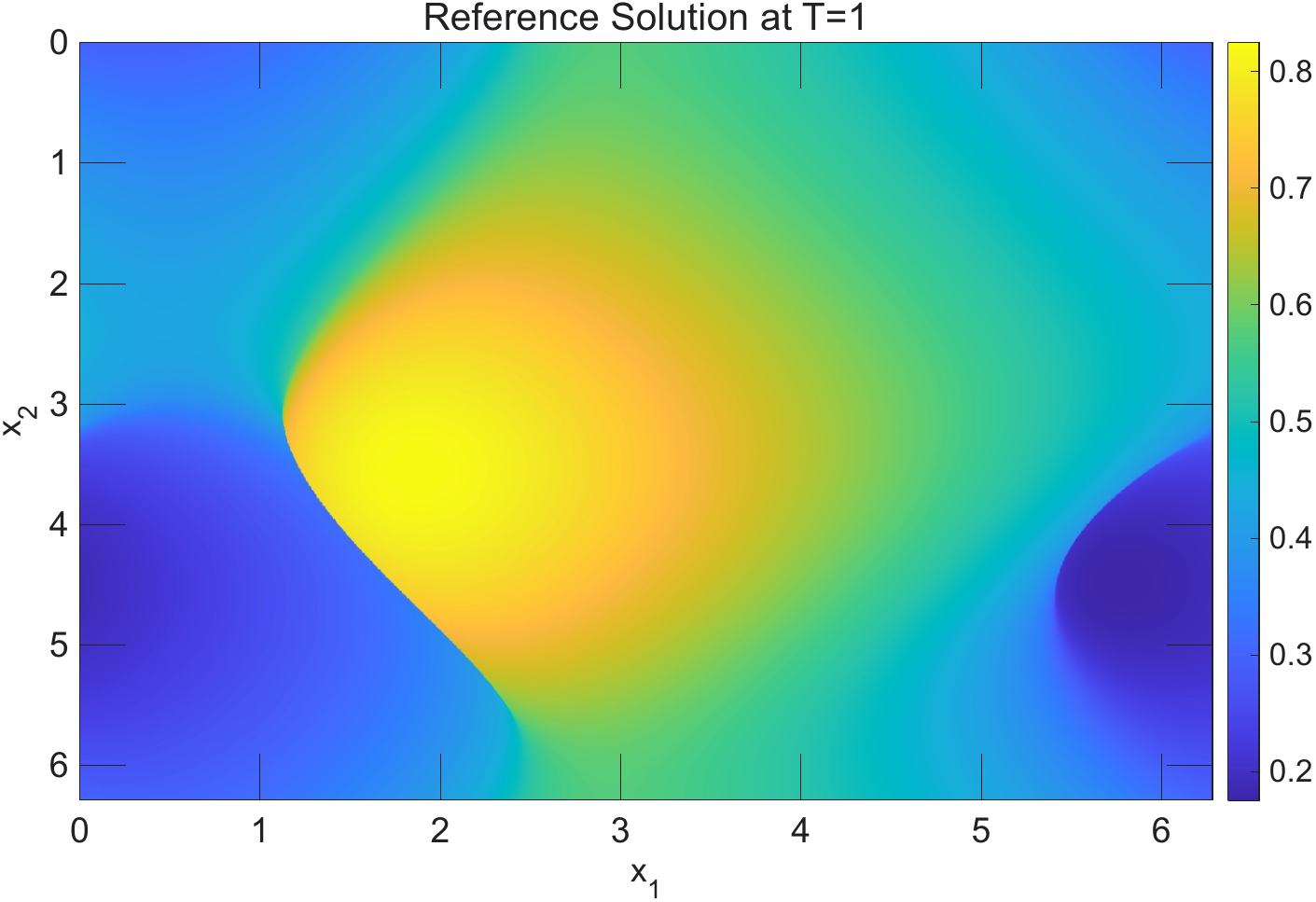}}
\subfigure[]{\includegraphics[width=0.48\textwidth]{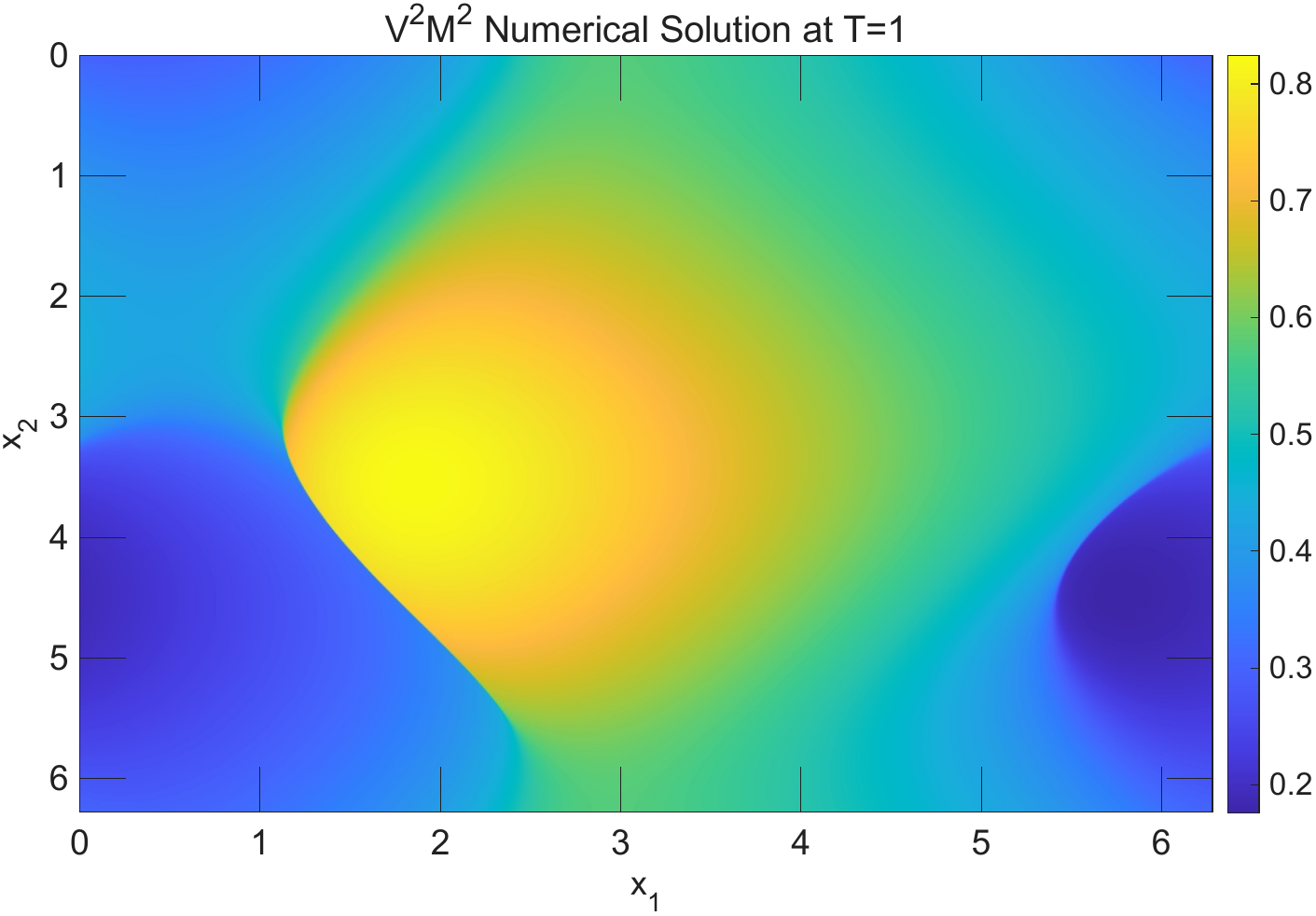}}
\end{center}
\caption{Comparison plots of the two-dimensional conservation laws between the reference solution and the numerical solution with $N=2^{11}$ for different fluxes. (a): reference solution for example \ref{eg5}; (b): VVMM numerical solution for example \ref{eg5}; (c): reference solution for example \ref{eg6}; (d): VVMM numerical solution for example \ref{eg6}.\label{fig4}}
\end{figure}

For the two-dimensional case, we take two different fluxes as examples. For both cases, we take Chebyshev order $k=7$, choose the Fourier cutoff parameter $N$ between $2^7$ and $2^{11}$ for test, and set $\tau=\varepsilon=(2N)^{-0.85}$.  We set the parameter $\delta_{\rm num}$ given in \eqref{numres} to be $(2N)^{-1.5}$, and the optimization tolerance is set to be $5\delta_{\rm num}$. For the reference solution, we employ the dimensional splitting method
$$\left\{\begin{aligned}
&u_t+\partial_1f_1(u)=0\\
&u_t+\partial_2f_2(u)=0,
\end{aligned}\right.$$
where both subequations are solved by global Lax-Friedrichs scheme on a fine spatial grid corresponding to the resolution parameter $N=2^{13}$ and CFL number $c=0.4$. The convergence order of this splitting method is $\frac12$, see \cite[Theorem~5.19]{Holdenetal2010}.

The proposed fluxes and initial data are listed as follows:
\begin{enumerate}[label=(\roman*),start=5]
\item Flux $f(u)=\big(u^2/2,u^2/2\big)$ and initial data $u_0=\frac12\sin x_1+\sin x_2$.\label{eg5}
\item Flux $f(u)=\big(\frac{u^2}{u^2+0.5(1-u)^2},\frac{u^2(u^2-0.5(1-u)^2)}{u^2+0.5(1-u)^2}\big)$ and initial data $u_0=\frac15\sin x_1+\frac18\sin x_2+\frac12$.\label{eg6}
\end{enumerate}

We first take Burgers' equation, denoted as example \ref{eg5}. Note that since
$$
u_t+uu_{x_1}+uu_{x_2}=u_t+\sqrt{2}uu_{x_2'},
$$
where the transformation $(x_1',x_2')=((x_1-x_2)/\sqrt2,(x_1+x_2)/\sqrt2)$ reduces the equation to a family of one-dimensional Burgers equations in $x_2'$, parametrized by $x_1'$.

For this example, we exhibit the relative $L^1$ error plot at $T=1$ against the Fourier cutoff parameter $N$ between $2^7$ and $2^{11}$ and compared our VVMM numerical solution of $N=2^{11}$ at time $T=1$ with the reference solution, see Figure~\ref{fig3}(a) and Figure~\ref{fig4}(a)(b), respectively.

We further tested the weighted Buckley--Leverett equation, denoted by example \ref{eg6}. This example uses a two-dimensional nonconvex flux and is included to test the method beyond convex scalar fluxes and beyond equations evolving along a single fixed spatial direction. For this example, we plotted the $L^1$ global error at $T=1$ against the Fourier cutoff parameter $N$ and compared our VVMM numerical solution of $N=2^{11}$ at time $T=1$ with the reference solution, see Figure~\ref{fig3}(b) and Figure~\ref{fig4}(c)(d), respectively.

Figures~\ref{fig3} and~\ref{fig4} show good agreement between the VVMM approximation and the reference solution for both two-dimensional examples. The observed convergence rates are again consistent with the theoretical almost one-half order rate and are slightly higher in these tests.

The experiments use a fixed degree $k=7$, whereas the parameter condition in Theorem~\ref{thm} requires $k$ to grow with $N$. The numerical smoothing parameter $\delta_{\rm num}$ is distinct from the regularity-loss exponent $\delta$; quadrature, residual smoothing, and optimization errors are not covered by the theorem. We observe that further increasing $k$ or decreasing $\delta_{\rm num}$ leads to only marginal improvements in the numerical results.

We further tested the GPU time for different $N$. The computational costs of both cases are similar. As an example, Figure~\ref{figtime}(b) reports the GPU time against the spatial resolution and time step size for example \ref{eg5} at time $T=1$. The two-dimensional numerical experiments were run on an Nvidia A100 cluster.

\end{document}